\documentclass{amsart}

\usepackage{amsmath,amssymb,enumerate, amsfonts}

\usepackage{epsfig}
\usepackage{color}

\newtheorem{theorem}{Theorem}[section]

\newtheorem{addendum}[theorem]{Addendum}

\newtheorem{proposition}[theorem]{Proposition}
\newtheorem{lemma}[theorem]{Lemma}
\newtheorem{corollary}[theorem]{Corollary}

\newtheorem{definition}[theorem]{Definition}
\newtheorem{remark}[theorem]{Remark}
\newtheorem{remarks}[theorem]{Remarks}
\newtheorem{claim}[theorem]{Claim}
\newtheorem{question}{Question}

\newcommand{\new}[1]{{\bf#1}}

\newcommand\conorm{\operatorname{conorm}}

\newcommand\Diff{\operatorname{Diff}}
\newcommand\eps{\epsilon}

\newcommand\Cyl{\mathcal C}

\newcommand\cF{\mathcal F}

\newcommand\cT{\mathcal T}
\newcommand\Fs{\mathcal F^s}
\newcommand\Fc{\mathcal F^{c}}

\newcommand\Fcs{\mathcal F^{cs}}
\newcommand\Fcu{\mathcal F^{cu}}
\newcommand\Fu{\mathcal F^{u}}

\newcommand\LL{{\mathbb L}}

\newcommand\Lcu{\lambda^{cu}}

\newcommand\loc{{\operatorname{loc}}}

\newcommand\nucu{m^{cu}}

\newcommand\nuu{m^{u}}
\newcommand\NN{{\mathbb N}}

\newcommand\Proberg{{\mathbb P}_{\operatorname{erg}}}
\newcommand\RR{{\mathbb R}}
\newcommand\supp{\operatorname{supp}}
\renewcommand\top{{\operatorname{top}}}

\newcommand\mftPH{\rm(I)}
\newcommand\mftDC{\rm(II)}
\newcommand\mftCLeaf{\rm(III)}
\newcommand\mftPush{\rm(IV)}

\begin{document}

\title[Dichotomy for measures of maximal entropy]{A dichotomy for measures of maximal entropy
near time-one maps of transitive Anosov flows
}

\begin{abstract}
We show that time-one maps of transitive Anosov flows of compact manifolds are accumulated by diffeomorphisms robustly satisfying the following dichotomy: either all of the measures of maximal entropy are non-hyperbolic, or there are exactly two ergodic measures of maximal entropy, one with a positive central exponent and the other with a negative central exponent.

We establish this dichotomy for certain partially hyperbolic diffeomorphisms isotopic to the identity whenever both of their strong foliations are minimal. Our proof builds on the approach developed by Margulis for Anosov flows where he constructs suitable families of measures on the dynamical foliations.

\medbreak
\noindent
{\sc R\'esum\'e.} Nous montrons que l'application du temps $1$ de tout flot d'Anosov transitif est dans l'adh\'erence des diff\'eomorphismes pr\'esentant robustement la dichotomie suivante: ou bien aucune mesure maximisant l'entropie n'est hyperbolique, ou bien il existe exactement deux mesures ergodiques maximisant l'entropie, l'une ayant un exposant central strictement positif, l'autre strictement n\'egatif.

Nous \'etablissons cette dichotomie pour certains diff\'eomorphismes partiellement hyperboliques isotopes \`a l'identit\'e sous l'hypoth\`ese de la minimalit\'e de leurs feuilletages invariants forts. Notre preuve s'appuie sur l'approche d\'evelopp\'ee par Margulis dans le cas des flots d'Anosov et la construction de familles de mesures convenables sur les diff\'erents feuilletages dynamiques.
\end{abstract}

\author{J\'er\^ome Buzzi, Todd Fisher, and Ali Tahzibi}
\address{J.~Buzzi, Laboratoire de Mathématiques d'Orsay, CNRS - UMR 8628
Universit\'e Paris-Saclay, 91405 Orsay, France, \emph{E-mail address:}
\tt{jerome.buzzi@universite-paris-saclay.fr}}
\address{T.~Fisher, Department of Mathematics, Brigham Young University, Provo, UT 84602, \emph{E-mail address:} \tt{tfisher@mathematics.byu.edu}}
\address{A.~Tahzibi, Instituto de Ci\^{e}ncias Matem\'aticas e de Computa\c{c}\~ao, Universidade de S\~ao Paulo (USP), \emph{E-mail address:} \tt{tahzibi@icmc.usp.br}}

\thanks{ T.F.\ is supported by Simons Foundation grant \# 239708. A.T is supported by FAPESP 107/06463-3 and CNPq (PQ) 303025/2015-8. A.T.'s stay at the Institut de Math\'ematique d'Orsay was partially supported by the ANR project ISDEEC \# ANR-16-CE40-0013.}

\subjclass[2010]{37C40, 37D30, 37A35, 37D35}
\keywords{Dynamical systems; smooth ergodic theory; partially hyperbolic systems and dominated splittings; entropy and other invariants; measures maximizing the entropy; thermodynamic formalism and equilibrium states}

\maketitle

\section{Introduction}

{In his pioneering work \cite{Margulis70}, Margulis studied measures of maximal entropy of geodesic flows in order to count closed geodesics for manifolds with variable negative curvature. More precisely, }
he constructed  {a family of} measures $\{m_x\}_{x\in M}$ such that for all $x\in M$ the measure $m_x$ is carried by 
the unstable manifold at $x$, and for all $t\in\RR$ we have
 $$ (\varphi^t)_* m_x=e^{-t \cdot h_\top(\varphi)}m_{\varphi^t x}.$$  
 He then
{built an invariant probability measure which was observed to be a measure of maximal entropy and is now} called the Bowen-Margulis measure. {It was then proved 
to be the unique measure of maximal entropy. We refer to Ledrappier \cite{Ledrappier13} for an introduction.}

 In this paper, we will extend Margulis' construction to a class of partially hyperbolic maps  and obtain a striking dichotomy.

\begin{theorem}
\label{mainthm0}
If $\varphi^t$ is a  transitive Anosov flow on a compact manifold $M$, then there is an open set $\mathcal U$ in $\Diff^1(M)$ which contains $\varphi^1$ in its closure
 such that for any $f\in \mathcal{U} \cap \mathrm{Diff}^2(M)$ we have the following dichotomy:
\begin{enumerate}
\item either all the measures of maximal entropy have zero central Lyapunov exponents, or
\item there are exactly two ergodic measures of maximal entropy where one has a positive central exponent and {the other} has a negative central exponent, and both measures are Bernoulli.
\end{enumerate}
\end{theorem}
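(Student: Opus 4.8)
We outline the argument. The time-one map $\varphi^1$ is partially hyperbolic with one-dimensional central bundle $E^c$ tangent to the flow, and it is dynamically coherent, with $\Fc$ the orbit foliation and $\Fcs,\Fcu$ the weak stable and weak unstable foliations of the flow; by Hirsch--Pugh--Shub theory these properties, plaque expansiveness, and being isotopic to the identity all persist on a $C^1$-neighborhood. Inside such a neighborhood we let $\mathcal U$ be the open subset of those $f$ for which both the strong stable foliation $\Fs$ and the strong unstable foliation $\Fu$ are minimal; that $\mathcal U$ accumulates on $\varphi^1$ is obtained by a small $C^1$-perturbation near a point of a dense orbit, which creates robust minimality of the two strong foliations. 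It therefore suffices to prove the dichotomy for a fixed $f\in\mathcal U\cap\Diff^2(M)$; write $h:=h_\top(f)$, which is positive since $f$ has transverse homoclinic points.

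Following Margulis \cite{Margulis70}, and using that the local product structure of $f$ persists together with the minimality of the strong foliations, we construct continuous families of locally finite, fully supported Borel measures $\{\nuu_x\}$ on the leaves of $\Fu$ and $\{\nus_x\}$ on the leaves of $\Fs$, unique up to a global scalar, with
\[ f_*\nuu_x=e^{-h}\,\nuu_{fx},\qquad f_*\nus_x=e^{h}\,\nus_{fx}, \]
each invariant under the holonomies of the complementary strong foliation inside center--stable, resp. center--unstable, leaves; a Perron--Frobenius argument identifies the exponent $h$ with $h_\top(f)$. Disintegrating over these subfoliations, we then build partial Margulis measures $\{\nucu_x\}$ on the leaves of $\Fcu$ and $\{\nucs_x\}$ on the leaves of $\Fcs$, quasi-invariant with the same eigenvalues $e^{\mp h}$, whose conditionals along $\Fu$ (resp. $\Fs$) are proportional to $\nuu$ (resp. $\nus$), and which remain invariant under the complementary strong holonomy.

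A Ledrappier--Young-type entropy computation, combined with these eigenvalue relations, shows that an $f$-invariant Borel probability $\mu$ is a measure of maximal entropy if and only if its conditionals along $\Fu$ are proportional to $\nuu$ and those along $\Fs$ are proportional to $\nus$; such a $\mu$ is then determined by an $f$-equivariant family $\{\eta_x\}$ of measures along the one-dimensional center leaves which, on matching the $e^{-h}$ eigenvalues of $\nucu$ and $\nuu$, satisfies $f_*\eta_x=\eta_{fx}$ and is invariant under $\Fs$-holonomy. The simplex of measures of maximal entropy is thereby affinely identified with the compact convex set of normalized, $\Fs$-holonomy-invariant, $f$-equivariant families of measures along $\Fc$. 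Exploiting minimality of $\Fs$ and $\Fu$ once more, we show this set is one of two types: either it contains a family equivalent to the natural arclength family on the center leaves, in which case it reduces to that single family --- so there is a unique measure of maximal entropy $\mu_0$, and telescoping the arclength Radon--Nikodym cocycle along orbits (as for the flow) gives $\int\log\|Df|_{E^c}\|\,d\mu_0=0$, which is alternative (1); or every such family is mutually singular with arclength, and an extremality argument leaves exactly two ergodic ones, the two ``one-sided'' center-transverse measures $\mu^+,\mu^-$ assembled from $\nucu$ and from $\nucs$, with $\int\log\|Df|_{E^c}\|\,d\mu^+>0$ and $\int\log\|Df|_{E^c}\|\,d\mu^-<0$, which is alternative (2). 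In case (2) the measures $\mu^\pm$ are hyperbolic and carry a local product structure coming from $\nuu$, $\nus$ and the transverse center measure; an ergodic hyperbolic measure of maximal entropy of a $C^2$ diffeomorphism with local product structure is Kolmogorov by Pesin theory and in fact Bernoulli, which yields the stated conclusion.

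The main obstacle is the Margulis construction in the absence of a flow: producing the partial Margulis measures on the center--unstable and center--stable leaves with the precise equivariance and strong-holonomy invariance requires a careful treatment of plaque expansiveness, of the uniform continuity of the local product structure, and of the Perron--Frobenius eigenvalue. The second delicate point, again resting essentially on minimality of both strong foliations, is to show that when the equivariant center-transverse family is not flow-like there are \emph{exactly} two ergodic measures of maximal entropy --- ruling out three or more, and, crucially for the dichotomy itself, ruling out the coexistence of a flow-like zero-exponent measure with a hyperbolic one.
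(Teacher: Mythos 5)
Your first stage (persistence of partial hyperbolicity, dynamical coherence and the flow-like structure via Hirsch--Pugh--Shub, then a $C^1$-perturbation to make both strong foliations robustly minimal) matches the paper's reduction to Theorem~\ref{mainthm-mft}, although ``a small perturbation near a point of a dense orbit'' understates what is needed: one first perturbs to robust transitivity (Bonatti--D\'iaz) and then to robust minimality of both strong foliations (Bonatti--D\'iaz--Ur\`es). The genuine gap is in the core of the dichotomy. You characterize the MMEs by the condition that \emph{both} the $\Fu$-conditionals are proportional to $\nuu$ \emph{and} the $\Fs$-conditionals are proportional to $\nus$, and you then parametrize the simplex of MMEs by $f$-equivariant, $\Fs$-holonomy-invariant families of measures on the center leaves. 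This characterization is false in the hyperbolic case. The Ledrappier--Young equality $h(f,\mu,\Fu)=h(f,\mu)$ only holds when $\lambda^c(\mu)\le 0$; for an ergodic MME $\mu^-$ with $\lambda^c(\mu^-)<0$, the Pesin unstable lamination of $f^{-1}$ is the center-stable one, so $h(f^{-1},\mu^-,\Fs)$ may be (and is) strictly smaller than $h_\top(f)$, and the $\Fs$-conditionals of $\mu^-$ are \emph{not} the Margulis ones. Indeed, in the paper the two hyperbolic MMEs are the distinct quasi-product measures $\mu^{cs\otimes u}$ and $\mu^{cu\otimes s}$, each carrying the Margulis conditionals along exactly one strong foliation; if a measure had both, the Hopf-type uniqueness (Remark~\ref{rem-unique} and its mirror) would force the two MMEs to coincide. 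Consequently your identification of the MME simplex with a convex set of center-transverse families, the ensuing ``extremality argument'' producing exactly two ergodic MMEs, and the exclusion of coexistence of a zero-exponent MME with a hyperbolic one, all rest on a false premise. (Your case (1) also asserts \emph{uniqueness} of the nonhyperbolic MME, which is not part of the statement and is posed as an open question in the paper.)

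What is needed instead, and what the paper supplies, is: (i) identification of the $\Fu$-conditionals with the Margulis system \emph{only} for ergodic MMEs with $\lambda^c\le 0$ (Ledrappier's Jensen-inequality argument), together with $D_u=e^{h_\top(f)}$ obtained from the quasi-product measure $\mu^{cu\otimes s}$, not from a direct Perron--Frobenius argument; (ii) a Hopf argument, using the absolute continuity ($cs$-quasi-invariance, not the exact invariance you assert) of the Margulis $u$-system and Pesin local stable manifolds, showing that a hyperbolic MME with $\lambda^c<0$ is the unique MME with $\lambda^c\le0$; and (iii) a twin-measure construction along the one-dimensional center leaves producing, from a hyperbolic MME with negative central exponent, an isomorphic MME with nonnegative central exponent --- this is what forces the second hyperbolic MME to exist and makes the dichotomy exhaustive. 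Your proposal contains neither the Hopf step nor the twin-measure step. Two further technical points: building the $u$-conditionals ``directly'' on strong unstable leaves hides the actual construction (a Schauder--Tychonoff fixed point on functionals over $cu$-test functions, invariant under the genuinely contracted $s$-holonomy, followed by thickening along center segments $[x,f(x))_c$, which forces one to exclude the leaves meeting compact center leaves); and the Bernoulli property in the paper comes from Ben Ovadia's symbolic dynamics plus an iteration argument killing the period, whereas your appeal to ``Pesin theory plus local product structure'' is only a sketch.
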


\subsection*{Related results}

These results are part of a larger program to understand properties of entropy beyond uniform hyperbolicity.  In that classical setting, say for a transitive Anosov diffeomorphism, there is a unique measure of maximal entropy.\footnote{{Throughout this paper, we will abbreviate \emph{ergodic Borel probability measure maximizing the entropy} to MME. Recall that a measure maximizes the entropy if and only if its is invariant and almost all its ergodic components are MMEs.}} {Beyond the hyperbolic setting,} even though there are a number of significant results \cite{Newhouse89, BCS,BF} there are still many fundamental open questions.  Partially hyperbolic diffeomorphisms with one-dimensional center have been studied as the ``next nontrivial class". 
A MME always exists in this setting by entropy expansivity (see  \cite{CowiesonYoung,DFPV, LVY}). Its uniqueness is a more delicate question.

{This uniqueness} has been shown for  certain systems that are \emph{derived from Anosov}, a subclass introduced by Ma\~n\'e, first for specific constructions, then in greater and greater generality \cite{BFSV,CT,U,FPS,BCF}.

The partially hyperbolic diffeomorphisms with a center foliation into circles form another subclass with a more subtle behavior. Assuming accessibility, \cite{RHRHTU} has established the following dichotomy:
 \begin{itemize}
  \item[--] either the dynamics is isometric in the center direction  and there exists a unique MME which is nonhyperbolic; or
  \item[--] there are multiple hyperbolic MMEs.
\end{itemize}

\subsection*{Strategy of proof}

We introduce a new subclass of partially hyperbolic diffeomorphisms with one-dimensional center which we call \emph{flow-type}. They are isotopic to the identity and the fundamental examples are the perturbations of time-one maps of Anosov flows.
Our main result is {Theorem~\ref{mainthm-dichotomy}: the above dichotomy holds for partially hyperbolic flow-type diffeomorphisms whose strong foliations are both minimal.
  
The uniqueness of the MME for a given sign  of the central exponent {(say nonpositive)} follows from a variant of Margulis' approach. Namely, we first build a family of measures on the center-unstable leaves {which is invariant under stable holonomies and projectively invariant under the dynamics}. Then we construct measures on unstable leaves, which we call Margulis $u$-conditionals. This is more difficult for maps than for flows.

We then use the entropy with respect to the unstable foliation as introduced by Ledrappier and Young \cite{LYII} and an argument of Ledrappier \cite{L84} to show that, when its central exponent is nonpositive, a measure maximizes the entropy if and only if its disintegration along the unstable leaves {is given by} the Margulis conditionals.

A Hopf argument shows that if there is a MME with negative central exponent, then any MME with nonpositive central exponent must coincide with it.
The symmetry between positive and negative central exponents in the hyperbolic case follows from the one-dimensionality of the central leaves: we associate to any measure with, say negative central exponent, an isomorphic one with nonnegative central exponent.

A hyperbolic MME is isomorphic to a Bernoulli shift times a circular permutation, according to a general result by Ben Ovadia \cite{Benovadia}. The triviality of the permutation follows by considering iterates.
This concludes the proof of Theorem~\ref{mainthm-dichotomy}.

\medbreak

Finally, to prove  Theorem~\ref{mainthm0} we establish Theorem~\ref{mainthm-mft}, i.e., we  find open sets of partially hyperbolic flow-type diffeomorphisms with both strong foliations minimal near any time-one map of a transitive Anosov flow. We first show that such time-one maps are robustly  flow-type, partially hyperbolic diffeomorphisms. Then  Bonatti and D\'iaz~\cite{BD96} provide a perturbation ensuring robust transitivity. Lastly, by a further perturbation {following} Bonatti, D\'iaz, and Ur\`es~\cite{BDU}  we ensure robust minimality of both strong foliations.   Theorem~\ref{mainthm0}  now follows from Theorem~\ref{mainthm-dichotomy}. 

\subsection*{The use of Margulis conditionals}
The construction of Margulis has given rise to a large body of work, mainly devoted to the estimation of the number of periodic orbits, sometimes beyond the uniformly hyperbolic setting \cite{Knieper97}. We refer to Sharp's survey  in  \cite{Margulis2004}, the long awaited publication of Margulis' thesis. The works of Hamenst\"{a}dt \cite{Ha89} and Hasselblatt \cite{H89} that give a geometric description of the Margulis conditionals $\{m_x\}_{x\in M}$ are perhaps closer to our concerns. {We also note Plante's point of view \cite{Plante75} (see also \cite{RuelleSullivan75,Ruelle75}): measures invariant under stable holonomies can be seen as transverse measures. We have not pursued this point of view here.} 

While this work was being written, we learned that a different but related approach has been developed in \cite{CPZ}. This approach can deal with equilibrium measures (i.e., generalizations of measures of maximal entropy taking into account a weight function) but requires non-expansion along the center. Separately, Jiagang Yang has told us that he  also {has} {some related results.} 

\subsection*{Comments}

{
First, we note that our result, Theorem~\ref{mainthm-dichotomy}, will be proved for a class of partially hyperbolic diffeomorphisms which we call \emph{flow-type} (see Def.~\ref{flowtype}). This class turns out to coincide\footnote{Discretized Anosov flows are partially hyperbolic diffeomorphisms that can be written as $x\mapsto\psi^{\tau(x)}(x)$ for some topological Anosov flow $\psi$ and some positive continuous function~$\tau$. In dimension~3, \cite[Proposition G.2]{BFFP1} shows that discretized Anosov flows are, among the partially hyperbolic diffeomorphisms, exactly those which satisfy our Definition~\ref{flowtype} with the additional condition that the underlying flow~$\psi$ is topological Anosov. However, this is automatic by \cite[Theorem 2]{BW}.},
at least in dimension~3, with the \emph{discretized Anosov flows} introduced by Barthelm\'e, Fenley, Frankel, and Potrie in the classification of partially hyperbolic diffeomorphisms \cite{BFFP1}.  Work by Martinchich \cite{Martinchich} is extending this to any dimension, and proves that the set of such diffeomorphisms is both open and closed among partially hyperbolic diffeomorphisms (making this property invariant by isotopy). 
}

{
We point out that even if our result main's application is to perturbations of time-one maps of flows with very good properties, those perturbations are almost never time-one maps of a  perturbed flow \cite{Palis74}.
This well-known fact explains why some properties, such as stable ergodicity  \cite{GPS94}, have been extended from time-one maps to their perturbations only with great difficulties. 
}

Second, we note that part of our results could be obtained from symbolic dynamics, using generalizations of ideas going back to  the classical works of Sinai, Ruelle, and Bowen (see, e.g., \cite{Sinai72,BowenBook,RuelleBook}). More precisely, existence of at most one MME with, say, positive central exponent can be deduced from \cite[Section 1.6]{BCS} since, in the terminology of this work, our minimality assumption implies that there is a unique homoclinic class of measures with a given sign of the central exponent. However, the dichotomy does not seem to follow from this approach {which is blind to nonhyperbolic measures.}

Thirdly, one usually expects that results such as ours can be extended to $C^{1+\alpha}$ smoothness, for any $0<\alpha<1$, and generalized to equilibrium measures with respect to H\"older-continuous potentials (although {uniqueness holds} for generic potentials \cite{RuelleBook}).

\subsection*{Questions}
{
Our techniques demand a very strong form of irreducibility: the minimality of both strong foliations. We ask:
}
\begin{question}
In Theorem~\ref{mainthm-dichotomy}, can one replace minimality of both strong foliations by minimality of just one {of those foliations} or by robust transitivity {of the diffeomorphism}? 
\end{question}

{This minimality is obtained in Theorem \ref{mainthm0} by a $C^1$ perturbation assuming the topological transitivity of the Anosov flow.  We believe that some non transitive Anosov flows could be accumulated by diffeomorphisms having more than two MMEs.}

{
We expect both the hyperbolic and nonhyperbolic cases to occur with the hyperbolic case being generic. Indeed,} in the conservative  setting, there is a {very strong} rigidity result \cite{AVW}. We think that some version of it may hold for {MMEs in the dissipative setting.}

\begin{question}\label{q.dichotomy}
In the setting of Theorem~\ref{mainthm-dichotomy}, is the hyperbolic case open and dense? When the MME is nonhyperbolic, does this imply that the diffeomorphism is the time one map of a flow? does it at least exclude the existence of hyperbolic periodic points?
\end{question}

Though we will identify the disintegrations of nonhyperbolic MMEs along \emph{both} strong foliations, {our} analysis remains incomplete:

\begin{question}
Consider a partially hyperbolic diffeomorphism $f$ with flow-type and with minimality of both strong foliations. {Assume that its MMEs are nonhyperbolic.} Can their disintegration along the center be atomic like in the hyperbolic case? Can there be more than one nonhyperbolic MME? Are nonhyperbolic MMEs Bernoulli?
\end{question}

{
\begin{remark}
After the preprint version of this work was released, Crovisier and Poletti \cite{CrovisierPoletti} {have announced} positive answers to Questions 2 and 3. They generalized Avila-Viana's invariance principle  {(see  \cite{L}, \cite{AV} and \cite{TY})} to partial hyperbolicity with quasi-isometric center  and applied it to the nonhyperbolic MMEs in our setting using the disintegration we establish here (see Proposition~\ref{prop-Ledrappier}).
\end{remark}
}

We prove that the hyperbolic MMEs are  {Bernoulli, hence strongly mixing. One can try to establish some speed (see \cite{YangYang} for a related result).}

\begin{question}
If $\mu$ is a hyperbolic MME for a flow-type diffeomorphism $f$ with minimality of both strong foliations, does it satisfy \emph{exponential decay of correlations} for H\"older-continuous functions, i.e., for any H\"older-continuous functions $u,v:M\to\RR$, does there exist a number $\kappa<1$ such that:
 $$
    \int_M u\circ f^n . v \, d\mu - \int_M u \, d\mu \int_M  v \, d\mu = O(\kappa^n)?
 $$
\end{question}

For Anosov flows, the topological entropy can obviously be changed by perturbations whereas it is locally constant for Anosov diffeomorphisms. What is the situation for the maps we consider?

\begin{question}
Consider a flow-type diffeomorphism whose strong foliations are robustly both minimal.
Is it true that the volume growth of each strong leaf is equal to the topological entropy?
Does the topological entropy have a homological interpretation? {Can an arbitrarily small perturbation make the topological entropy locally constant as a function of the diffeomorphism?}
\end{question}

Our approach avoids constructing the Margulis conditional on strong leaves that meet a compact leaf. We do not know if the construction could be extended to the whole manifold, e.g., by passing to some cover. Such continuity would imply uniqueness of these Margulis systems at least in the hyperbolic case. We therefore ask:

\begin{question}
Does a flow-type diffeomorphism with both strong foliation robustly minimal admit continuous Margulis conditional on those? 
\end{question}

\subsection*{Acknowledgments}
We are grateful to Sylvain Crovisier for stimulating discussions {as well as to Rafael Potrie and Santiago Martinchich for explaining some of their results to us. We also thank the referees for helping us improve this paper.} T.F. and A.T. thank the Laboratoire de Math\'ematiques d'Orsay (C.N.R.S. and Universit\'e Paris-Sud) for its hospitality. 


\section{Background}\label{s-back}


In this section we review concepts of partial hyperbolicity, Lyapunov exponents, and disintegration of measures. 

\subsection{Partial hyperbolicity}\label{secPH}

For a diffeomorphism $f:M\to M$ of a compact manifold to itself recall the norm and conorm with respect to a subspace of $V\subset T_xM$ for some $x\in M$: $\|Df| V\|:=\max\{\|Tf(v)\|:v\in V,\; \|v\|=1\}$ and
 $$\begin{aligned}
   &\conorm(Df| V) := \min\{\|Tf(v)\|:v\in V,\; \|v\|=1\}.
 \end{aligned}$$
 A splitting $E\oplus F$ is dominated\footnote{This is sometimes called \emph{pointwise} domination, see \cite{AbdenurViana}.} if it is nontrivial, invariant, and if there is some $N\geq1$ such that, for all $x\in M$:
   $$
      \|Df^N|E_x\|<\frac12\conorm(Df^N|F_x).
   $$

\begin{definition}
A diffeomorphism is  \new{(strongly) partially hyperbolic} if there is an invariant splitting of the tangent bundle: $TM=E^s\oplus E^c\oplus E^u$ such that $E^s\oplus(E^c\oplus E^u)$ and $(E^s\oplus E^c)\oplus E^u$ are dominated, $E^s$ is uniformly contracted, and $E^u$ is uniformly expanded. 
\end{definition}

The stable and unstable bundles $E^s$ and $E^u$ of a  partially hyperbolic diffeomorphism are always uniquely integrable into stable and unstable foliations,
respectively, denoted by $\Fs$ and $\Fu$.  The bundles $E^c$, $E^{cs}:=E^s\oplus E^c$, and $E^{cu}:=E^c\oplus E^u$ fail to be integrable for some strongly partially hyperbolic diffeomorphisms.

\begin{definition}
A strongly partially hyperbolic diffeomorphism $f$ is \new{dynamically coherent} if there exists {invariant} foliations $\Fcs$ and $\Fcu$ that are tangent to the $E^{cs}$ and $E^{cu}$ bundles respectively.  In this case there is a center foliation $\Fc$ given by $\Fc(x)=\Fcs(x)\cap\Fcu(x)$ for $x\in M$. 
\end{definition}

 {We refer to} \cite{BW} for various other definitions of dynamical coherence and their  relationships.

For  a dynamically coherent diffeomorphism $f$ each leaf of $\Fcs$ is subfoliated by  the leaves of $\Fc$ and the leaves of $\Fs$.  A similar statement holds for the center-unstable foliation.  Then for any points $p, q\in M$ where $q\in\Fs(p)$ there is a neighborhood $U_p$ of $p$ in the leaf $\Fc(p)$ and a homeomorphism $h^s_{p,q}:U_p\to \Fc(q)$ {which slides points along the stable manifolds inside a foliation chart, i.e.,}
$$
h^s_{p,q}(x)\in \Fs_{{\loc}}(x)\cap \Fc_{\mathrm{loc}}(q).
$$
The map $h^s_{p,q}$ is the \textit{local stable holonomy map}. {A holonomy map  is any homeomorphism $h:A\to B$ which coincides locally with a composition of such local holonomy maps. Its \emph{size} is $\Delta:=\sup_{x\in A} d^s(p,h(p))$ where $d^s$ is the distance given by the intrinsic metric of the stable manifolds. We then say that $A$ and $B$ are \emph{$\Delta$-equivalent along $\Fs$} or $(s,\Delta)$-equivalent for short. These notions extends in the obvious way to the other dynamical foliations.}  Unstable holonomy maps are defined similarly.}

\subsection{Center Lyapunov exponents}

For a strongly partially hyperbolic diffeomorphism $f:M\to M$ a real number $\chi$ is a \textit{center Lyapunov exponent} at $x\in M$ if there exists a nonzero vector $v\in E^c_x$ such that
$$
\lim_{n\to \infty} \frac{1}{n} \log \|Df^n(v)\|=\chi.
$$
If $\mathrm{dim} E^c=1$, then the limit above only depends on $x$ and exists $m$-almost everywhere for any $f$-invariant Borel probability measure $m$.  For $m$ an ergodic $f$-invariant Borel probability measure the limit takes on a single value for $m$-almost every $x\in M$.

\subsection{Disintegration of a measure}\label{s-disint}

Let $X$ be a Polish  space and $\mu$ be a finite Borel measure on $X$.  Let $\mathcal{P}$ be a partition of $X$ into measurable sets.  Let $\hat{\mu}$ be the induced measure on the $\sigma$-algebra generated by $\mathcal{P}$.  A \textit{system of conditional measures of $\mu$} with respect to $\mathcal{P}$ is a family $\{\mu_P\}_{P\in\mathcal{P}}$ of probability measures on $X$ such that 
\begin{enumerate}
\item $\mu_P(P)=1$ for $\mu$-almost every $P\in \mathcal{P}$, and
\item given any continuous function $\psi:X\to \mathbb{R}$, the function $P\mapsto \int\psi d\mu_P$ is integrable, and 
$$
\int_X \psi d\mu= \int_\mathcal{P}\left( \int \psi d\mu_P \right) d\hat{\mu}(P).
$$
\end{enumerate}
Rokhlin \cite{Rok49, Rok67} proved that if $\mathcal{P}$ is a measurable partition, then the disintegration always exists and is essentially unique.

We will consider partitions given by foliations of a manifold.  If a foliation has a positive measure set of noncompact leaves, then the result of Rokhlin does not immediately apply. { However,  one can extend the result of Rokhlin by disintegrating  into \textit{measures defined up to scaling}  (see Avila, Viana, and Wilkinson \cite{AVW}).}

Let $M$ be a manifold where $\mathrm{dim}(M)\geq 2$ and $m$ be  a locally finite measure on $M$.  Let $\mathcal{B}$ be a small foliation box.  Then Rokhlin's result implies there is a disintegration $\{m_x^\mathcal{B}\, :\, x\in\mathcal{B}\}$ of the restriction of $m$ to the foliation box into conditional probability measures along the local leaves of the foliation, {i.e., the connected components $\mathcal F_{\mathcal B}(x)$ of $\mathcal F(x)\cap\mathcal B$ for $x\in\mathcal B$.}  From \cite[Lemma 3.2]{AVW} we know that if $\mathcal{B}$ and $\mathcal{B}'$ are foliation boxes and $m$-almost any $x\in \mathcal{B}\cap \mathcal{B}'$, then the restriction of  $m_x^\mathcal{B}$ and $m_x^{\mathcal{B}'}$ coincide up to a constant factor.

We then {know} that for $m$-almost every $x\in M$ there is a {projective} measure $m_x$ (i.e., defined up to some scaling {possibly depending on $x$}) such that $m_x(M\setminus \mathcal{F}(x))=0$. 
Furthermore, the function $x\mapsto m_x$ is constant along the leaves of $\mathcal{F}$, and the conditional probabilities $m_x^\mathcal{B}$ along the local leaves of any foliation box $\mathcal{B}$ coincide almost everywhere with the normalized restriction of the $m_x$ to the local leaves of $\mathcal{B}$.

{Finally, we note that if the measure {$m$} is invariant and the one-dimensional foliation $\cF$ is fixed by some diffeomorphism (i.e., $f(\cF(x))=\cF(x)$) without fixed points, one can replace the projective measures by true measures using the global normalization: $m_x([x,f(x))_c)=1$ for all $x\in M$ where $[x, f(x))_c)$ is the segment in the center manifold between the points $x$ and $f(x)$.}

\subsection{Measurable systems of measures}
{We will work with families of measures carried by the leaves of the dynamical foliations up to a union of exceptional leaves.
}

\begin{definition}\label{def-sys-meas}
Given a foliation $\cF$ of some manifold $M$ and {some $\cF$-saturated subset $M_1\subset M$} a \new{measurable system of measures} on $\cF|_{M_1}$ is a family $\{m_x\}_{x\in M_1}$ such that:
 \begin{enumerate}[(i)]
  \item for all $x\in M_1$, $m_x$ is a Radon measure on $\cF(x)$;
  \item for all $x,y\in M_1$, $m_x=m_y$ if $\cF(x)=\cF(y)$;
  \item $M$ is covered by foliation charts $B$ such that: $x\mapsto m_x(\phi|\cF_B(x))$ is {measurable} {on $M_1$} for any $\phi\in C_c(B)$.
\end{enumerate}
\end{definition}

The Radon property (i) means that each $m_x$ is a Borel measure and is finite on compact subsets of the leaf $\cF(x)$ (here, and elsewhere, we consider the intrinsic topology {on each leaf}).

{If $\{\mu_x\}_{x\in M}$ is the disintegration of some probability measure $\mu$ along a foliation $\cF$ as defined in the previous definition and if $\{m_x\}_{x\in M_1}$ is a {measurable} system of measures on $\cF|_{M_1}$, we will say that they \emph{coincide} if $\mu(M_1)=1$ and for $\mu$-a.e. $x\in M_1$, $\mu_x$ and $m_x$ are proportional.
}

\begin{definition}\label{defMargulisSys}
{Assume that $\cF$ is a foliation which is {invariant} under some diffeomorphism $f:M\to M$, i.e., for all $x\in M$, $f(\cF(x))=\cF(f(x))$. Let $M_1\subset M$ be $\cF$-saturated.}
A {measurable} system of measures $\{m_x\}_{x\in M_1}$ on $\cF|_{M_1}$ is \new{dilated} if there is some number  {$D > 0$} such that for all {$x\in M_1\cap f^{-1}(M_1)$}:
  \begin{equation}\label{eq-dilation}
 { f_*m_x=D^{-1}m_{f (x).} }
  \end{equation}
 {$D$} is called the \new{dilation factor}. We call {the family $\{m_x\}_{x\in M_1}$ a \new{Margulis system} on $\cF$} and the measures $m_x$ the \new{Margulis $\cF$-conditionals}.
\end{definition} 

Our construction (following Margulis) relies on {properties of } 
{the }holonomy {between foliations} 
defined as follows:

\begin{definition}\label{d.invariant}
Let $\cF_1,\cF_2$ be foliations which are invariant under some diffeomorphism $f\in\Diff^1(M)$. {Let $M_1$ be an $\cF_1$-saturated subset of $M$.}
Assume that $\{m_x\}_{x\in M_1}$ is a Margulis system of measures on $\cF_1|_{M_1}$ and that  $\cF_2$ is  transverse to $\cF_1$.
The system $\{m_x\}_{x\in M_1}$ is \new{invariant}, respectively \new{quasi-invariant}, along $\cF_2$ if, for all $\cF_2$-holonomies $h:U\to V$ with $U,V$ contained in $\cF_1$-leaves {included in $M_1$:}
 \begin{equation}\label{eq-invariance}
   h_*(m_x|U)=m_{h(x)}|V \text{ for any }x\in U,
  \end{equation}
respectively:
 \begin{equation}\label{eq-quasi}
   h_*(m_x|U) \text{ and }m_{h(x)}|V\text{ are equivalent for any }x\in U.
  \end{equation}
\end{definition}

\begin{remark}
The quasi-invariance in \eqref{eq-quasi} can be characterized by the absolute continuity of the holonomies along $\cF_2$ with respect to a class of transversal measures defined by the Margulis system on $\cF_1$.
\end{remark}

{\begin{remark}
A system of measures along a foliation $\cF_1$ which is invariant along $\cF_2$ can be seen as a \emph{$\cF_2$-transverse measure} in the sense of Plante \cite{Plante75}, Ruelle \cite{Ruelle75}, and Ruelle-Sullivan \cite{RuelleSullivan75}. 

\end{remark}
}

{Though an arbitrary measurable system of measures along the strong unstable foliation does not need to correspond to the disintegration of any invariant probability measure, those we construct in this paper will  (see Proposition \ref{prop-LPS}).}

\section{Main Results}

This section collects our main results.
Our techniques deal with the following type of diffeomorphisms.
For convenience, we fix some Riemannian structure on the compact manifold $M$.

\begin{definition}  \label{flowtype}
A diffeomorphism $f:M\to M$ {is} {\bf  flow-type} if:
 \begin{enumerate}
  \item[\mftPH] {\sc partial hyperbolicity:} $f$ is strongly partially hyperbolic with splitting $TM=E^s\oplus E^c\oplus E^u$ and $\dim E^c=1$;
  \item[\mftDC] {\sc Dynamical coherence:} there are invariant foliations $\Fcs$ and $\Fcu$ tangent to $E^s\oplus E^c$ and $E^c\oplus E^u$;
 \end{enumerate}  
Let $\Fc$ be the foliation whose leaves are the connected components of the intersections $\Fcs(x)\cap\Fcu(x)$, $x\in M$. 
 \begin{enumerate}
  \item[\mftCLeaf] {\sc Center leaves:}  {The center foliation $\Fc$ is oriented and has at least one  compact  leaf.}
   \end{enumerate}  
  Let $F:\RR\times M\to M$ be the continuous flow along $\Fc$ with unit positive speed.
 \begin{enumerate}
  \item[\mftPush] {\sc Flow like dynamics:} there is a continuous $\tau:M\to\RR$ such that, for all $x\in M$, $f(x)=F(x,\tau(x))$ and $\tau(x)>0$. 
 \end{enumerate}
\end{definition}

{
\begin{remarks}
Let us note the following:

-- A diffeomorphism is flow-type if and only if its inverse is.

-- A flow obviously satisfies the flow like condition (IV). The converse is far from true (see the comments in the introduction).

-- The assumption of a compact leaf in condition (III) might seem artificial. It is only used to prove that the dilation of the unstable Margulis system is $D_u>1$ (Lemma~\ref{lem-Dbigger}) and there are many other sufficient conditions verified near the time-one map of an Anosov flow such as the uniform volume expansion/contraction of the center-unstable/center-stable leaves. However discretized Anosov flows have compact leaves (see \cite[Appendix G]{BFFP1}).

\end{remarks}
}

Following Margulis, we build special measures on strong stable and strong unstable leaves. More precisely, let $M^u:=M\setminus\Cyl^u$ where $\Cyl^u$  is the union of the unstable leaves that intersect some compact center leaf.
Define $M^s$ and $\Cyl^s$ likewise. Recall the notion of equivalence along some foliation from Sec.~\ref{secPH}.

\begin{theorem}
\label{mainthm-conditionals}
Let $f$ be a  flow-type $C^2$ diffeomorphism  {with}   minimal stable  foliation on a compact manifold $M$. Then  there is a {measurable} {system} of measures $\{m^u_x\}_{x\in M^u}$ such that:
 \begin{enumerate}
  \item\label{item-Radon} each $m^u_x$ is an atomless, fully-supported Radon measure on $\Fu(x)$  with respect to the intrinsic topology;
  \item\label{item-Margulis} $f_*m^u_x=D_u^{-1}m^u_{f(x)}$ for some constant $D_u>1$ (independant of $x$);
   \item\label{item-cs-qi} the system $\{m^u\}_{x\in M^u}$ is $cs$-quasi-invariant;
    \item\label{item-Mu}  $M^u$ is dense with full measure for any ergodic measure not supported on a closed leaf.
 \end{enumerate}
{In fact a stronger result than Item (3) holds:
For any $\delta > 0$ there exists $C(\delta) > 1$ such that if $A_1$ and $A_2$ inside $u$-leaves are $\delta$-equivalent along $\Fcs$, then 
\begin{equation}\label{eqLocalCSinv}
 \frac{1}{C(\delta)} < \frac{m^u(A_1)}{m^u(A_2)} < C(\delta).
\end{equation}
 }
\end{theorem}

Item \eqref{item-Margulis} says that $(m^u_x)_{x\in M^u}$ is a Margulis system on $\Fu|_{M^u}$ with dilation $D_u$ (see Def.~\ref{defMargulisSys}). We call  $\{m^u_x\}_{x\in M^u}$ the \emph{unstable Margulis system}. Considering $f^{-1}$, we  define similarly the measures $m^u_x$, called the  \emph{unstable Margulis conditionals}.

\begin{addendum}\label{addendum}
In the setting of the previous theorem, {assuming minimality of both stable and unstable foliations, the dilation factor $D_u$ of the unstable Margulis system satisfies:
$D_u=\exp h_\top(f)$.}
\end{addendum}

\medbreak

{The theorem will be proved in Section~\ref{s-Margulis} and the addendum in Section~\ref{s-dicho}. }

\begin{remarks}$ $

(1) The above theorem and addendum, applied to $f^{-1}$, define a \emph{stable Margulis system} $\{m^s_x\}_{x\in M^s}$ with dilation factor {$D_s=D_u^{-1}=\exp( - h_\top(f))<1$}.

(2) The $C^2$ smoothness assumption is only used by Theorem \ref{thm-Lcu-AC} to establish absolute continuity of the $s$-holonomy but it is probably enough to assume $C^{1+\alpha}$ smoothness. We do not know {about the $C^1$ smooth case}.

{
(3) The strong Margulis systems are unique up to a set negligible for the MMEs: e.g., for any two unstable Margulis systems $\{m^u_x\}$ and $\{\tilde m^u_x\}$ satisfying items (1) and (2) of Theorem~\ref{mainthm-conditionals}, the uniqueness of the disintegration of a probability measure yields $\mu(\{x:m^u_x\ne\tilde m^u_x\})=0$  for any MME $\mu$ with $\lambda^c(\mu)\leq0$. 
}
\end{remarks}

Using tools from Ledrappier and Young \cite{LYII}, we prove the following result in Section~\ref{s-dicho}.

\begin{theorem}\label{mainthm-mme}
Let  $f$ be a $C^2$ diffeomorphism that is  flow-type {and} minimal stable and unstable foliations  on a compact manifold $M$. {Let $\mu$ be an ergodic  MME.}

If  $\lambda^c(\mu)\leq 0$, then the disintegration of $\mu$ along $\Fu$ is given by  the unstable Margulis system $\{m^u_x\}_{x\in M^u}$ from Theorem~\ref{mainthm-conditionals}. 
In particular, the measure $\mu$ has full support.
\end{theorem}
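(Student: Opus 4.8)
The plan is to follow Ledrappier's characterization of measures of maximal entropy via their unstable conditionals, adapted to the partially hyperbolic flow-type setting. Let $\mu$ be an ergodic MME with $\lambda^c(\mu)\le 0$. First I would record the basic entropy identities available here: by entropy expansivity of partially hyperbolic diffeomorphisms with one-dimensional center, $h_\top(f)$ is attained, so $h_\mu(f)=h_\top(f)$. Since $\lambda^c(\mu)\le 0$, the Ledrappier--Young formula (or, more simply, the Ruelle inequality together with the fact that only the unstable directions contribute positively) gives that the metric entropy equals the unstable-foliation entropy $h_\mu(f,\Fu)$ in the sense of \cite{LYII}; concretely, $h_\mu(f)=h_\mu(f,\Fu)=\int \log J^u\,d\mu$ is not what we want — rather the point is that the full entropy is ``carried'' by $\Fu$, so that $\mu$ is an MME iff its unstable conditionals maximize the unstable entropy functional.

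Next I would bring in the unstable Margulis system $\{m^u_x\}_{x\in M^u}$ from Theorem~\ref{mainthm-conditionals}, with dilation factor $D_u=\exp h_\top(f)$ from the Addendum, and its $cs$-quasi-invariance (item~\ref{item-cs-qi}). The key mechanism is Ledrappier's argument from \cite{Ledrappier84}: the relation $f_* m^u_x = D_u^{-1} m^u_{f(x)}$ with $D_u=e^{h_\top(f)}$ means that along $\Fu$ the Margulis conditionals play the role of an equilibrium state for the geometric potential, and a measure $\mu$ with $\lambda^c(\mu)\le 0$ has $h_\mu(f)=h_\mu(f,\Fu)\le \log D_u = h_\top(f)$, with equality forcing the unstable conditionals of $\mu$ to be absolutely continuous with respect to the $m^u_x$. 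To make this precise I would: (a) fix a measurable partition $\xi$ subordinate to $\Fu$ in the sense of Ledrappier--Young, with $f^{-1}\xi$ refining $\xi$; (b) compute $H_\mu(f^{-1}\xi\mid\xi)$ and compare it, leaf by leaf, with the analogous quantity for the Margulis conditionals, which is exactly $\log D_u$ by the dilation identity; (c) invoke the variational/Jensen inequality (as in \cite{Ledrappier84}, or Led\-rappier--Young Proposition~on the ``$u$-measure'' characterization) to conclude $h_\mu(f,\Fu)\le\log D_u$ with equality iff the conditional $\mu_x^\xi$ is, for $\mu$-a.e.\ $x$, proportional to $m^u_x$ restricted to the $\xi$-atom. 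Since $\mu$ is an MME, equality holds, and since this holds for every such $\xi$ (and the $m^u_x$ have full support, are atomless and locally finite by item~\ref{item-Radon}), the disintegration of $\mu$ along $\Fu$ coincides with $\{m^u_x\}$. That the atoms exhaust, i.e.\ that $\mu(M^u)=1$, is exactly item~\ref{item-Mu} of Theorem~\ref{mainthm-conditionals}, applied to the positive-entropy ergodic measure $\mu$; and full support of $\mu$ then follows because $m^u_x$ has full support in $\Fu(x)$, the unstable foliation is minimal, and $\mu$ is $f$-invariant (so its support is a closed, $f$-invariant, $\Fu$-saturated set, hence all of $M$).

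The main obstacle, I expect, is step (b)--(c): setting up the entropy comparison correctly in the partially hyperbolic rather than the Anosov case. Two points need care. First, one must genuinely use $\lambda^c(\mu)\le 0$ to know that the center direction contributes nothing to the entropy, so that $h_\mu(f)$ really equals the unstable-leaf entropy $h_\mu(f,\Fu)$ and not something larger involving $E^c$; this is where the Ledrappier--Young machinery \cite{LYII} and the one-dimensionality of $E^c$ are essential (if $\lambda^c>0$ the center would carry entropy and the argument breaks, which is consistent with the dichotomy). Second, the ``$u$-Jacobian'' of $f$ along unstable leaves is not constant, so the equality $\log D_u = h_\top(f)$ is not a pointwise statement but an integrated one; deriving the sharp inequality $h_\mu(f,\Fu)\le\log D_u$ with the stated equality case requires the transverse-measure interpretation of the Margulis system (the remark following \eqref{eq-quasi}) and the $cs$-quasi-invariance to relate Bowen balls along $\Fu$ to $m^u$-measure uniformly. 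Once the dilation identity is fed into Ledrappier's Jensen-type inequality the rigidity (equality $\Rightarrow$ conditionals are Margulis) is essentially formal, so the real work is the dynamical estimate linking $h_\mu(f,\Fu)$ to the $m^u_x$-measures of dynamical balls.
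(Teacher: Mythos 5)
Your proposal follows essentially the same route as the paper: use Ledrappier--Young to identify $h(f,\mu)$ with the unstable entropy $h(f,\mu,\Fu)$ when $\lambda^c(\mu)\le 0$, then run Ledrappier's Jensen-inequality argument on an increasing generating partition subordinate to $\Fu$, feeding in the dilation identity $f_*m^u_x=D_u^{-1}m^u_{f(x)}$ with $D_u=e^{h_\top(f)}$, and use the equality case of Jensen to force the conditionals to be the Margulis ones; the full-support conclusion via minimality of $\Fu$ is also the paper's. This is precisely the content of Propositions~\ref{prop-Ledrappier} and~\ref{prop:dilationentropy} in the text, so the approach matches.
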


{The above applied to $f^{-1}$ shows that an ergodic MME with $\lambda^c(\mu)\geq0$ has disintegration along $\Fs$ given by $\{m^s_x\}_{x\in M^s}$. In particular, any MME has full support.}

\begin{remark}
The above theorem gives \emph{more information} in the non-hyperbolic case. Indeed, if $\mu$ is an ergodic measure of maximal entropy with $\lambda^c(\mu)=0$, then the disintegrations, along \emph{both} strong foliations $\Fu$ and $\Fs$, are given by the corresponding  Margulis systems from Theorem~\ref{mainthm-conditionals}.
\end{remark}

The dichotomy will  follow from two results about hyperbolic measures of maximal entropy. The first is a uniqueness result, based on the Hopf argument.

\begin{proposition}\label{mainthm-hopf}
Let  $f$ be a flow-type $C^2$ diffeomorphism { and minimal stable and unstable strong foliations}  on a compact manifold $M$.
Let $\mu$ be some ergodic MME. 
If $\mu$ is hyperbolic, say $\lambda^c(\mu)<0$, then there is no other ergodic MME $\nu$  with $\lambda^c(\nu)\leq 0$.
\end{proposition}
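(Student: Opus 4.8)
The plan is to run a Hopf-type argument using the two Margulis systems from Theorem~\ref{mainthm-conditionals}. Suppose $\mu$ and $\nu$ are both ergodic MMEs with $\lambda^c(\mu)<0$ and $\lambda^c(\nu)\le 0$. By Theorem~\ref{mainthm-mme} applied to $f$, both $\mu$ and $\nu$ have disintegration along $\Fu$ given by the \emph{same} unstable Margulis system $\{m^u_x\}_{x\in M^u}$; in particular both give full measure to $M^u$ and both have full support. The first step is to upgrade this ``same unstable conditionals'' into a genuine Hopf argument: I want to show that $\mu$ and $\nu$ assign equal values to a rich enough class of sets, forcing $\mu=\nu$.

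Here is how I would organize it. Since $\lambda^c(\mu)<0$, the center-stable direction $E^{cs}$ is uniformly-on-the-nose contracted along $\mu$-generic orbits in the Pesin/Lyapunov sense, so the Pesin stable manifolds of $\mu$-generic points are (open subsets of) $\Fcs$-leaves. Thus for a continuous function $\phi$, the Birkhoff average $\phi^+(x)=\lim_n \frac1n\sum_{k<n}\phi(f^k x)$ is constant along local $\Fcs$-plaques on a full $\mu$-measure set. Symmetrically, because both measures have unstable conditionals equal to the Margulis conditionals $m^u_x$, one can transfer information along $\Fu$-holonomies: the key input is item~(3) of Theorem~\ref{mainthm-conditionals}, the $cs$-quasi-invariance, which says the Margulis $u$-conditionals are carried into each other (up to equivalence) by $cs$-holonomy, together with item~(1), that each $m^u_x$ has full support and is atomless. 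The combination ``Birkhoff averages constant along $\Fcs$'' + ``conditional measures along $\Fu$ are absolutely continuous under $cs$-holonomy and have full support'' is exactly the configuration in which the classical Hopf argument closes: using minimality of $\Fu$ (and of $\Fs$) to get from a local coincidence to a global one, one concludes $\int\phi\,d\mu=\int\phi\,d\nu$ for all continuous $\phi$, hence $\mu=\nu$. Concretely, I would fix a $\mu$-generic and a $\nu$-generic point, connect them by a path alternating $\Fs$, $\Fc$ (time-shift along the flow $F$), and $\Fu$ segments inside a single foliation box — possible by minimality — and propagate equality of the Birkhoff averages along each leg: along $\Fs$ and $\Fc$ legs forward averages are preserved by contraction along $E^{cs}$, and along $\Fu$ legs backward averages are preserved by expansion, while the absolute-continuity/full-support properties guarantee that the generic sets for $\mu$ and $\nu$ actually meet after holonomy.

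The step I expect to be the main obstacle is making the $\Fu$-leg of the Hopf chain rigorous: a priori ``$\mu$ and $\nu$ have the same disintegration $\{m^u_x\}$ along $\Fu$'' only controls conditionals leaf-by-leaf, and one must leverage $cs$-quasi-invariance (item~(3)) to see that the $\mu$-generic set and the $\nu$-generic set, once pushed through an $\Fu$-holonomy, still intersect in a set of positive $m^u$-measure — this is where atomlessness and full support of the Margulis conditionals, and the absolute continuity of $cs$-holonomy, are essential. A secondary technical point is that $\nu$ is only assumed to satisfy $\lambda^c(\nu)\le 0$, so its center-stable Pesin manifolds need not be full $\Fcs$-leaves if $\lambda^c(\nu)=0$; but since we are free to use the \emph{unstable} conditionals of $\nu$ (guaranteed by Theorem~\ref{mainthm-mme}) rather than its stable Pesin theory, and we only need the \emph{stable} Pesin manifolds of the genuinely hyperbolic measure $\mu$, the argument still goes through. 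Once $\mu=\nu$ is established the proposition is proved.
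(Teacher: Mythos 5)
Your proposal is correct and follows essentially the same route as the paper: both measures share the Margulis $u$-conditionals (Proposition~\ref{prop-Ledrappier}), the Pesin stable manifolds of the hyperbolic measure $\mu$ saturate its basin on a set of uniform size and positive $m^u$-measure, minimality of $\Fu$ brings the unstable leaf of a $\nu$-generic point close to that set, and the $cs$-quasi-invariance (Corollary~\ref{compare}) transfers positive conditional measure across the holonomy, forcing the two ergodic basins to intersect. The paper's write-up realizes your ``Hopf chain'' as a single $cs$-holonomy from a Pesin block into $\Fu(x)$ rather than an alternating path of legs, but the mechanism is identical, and you correctly isolated both the key difficulty (positive measure after holonomy) and the point that only $\mu$'s hyperbolicity is needed.
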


The second result is a symmetry argument, using the one-dimensional center leaves.  It builds  so-called twin measures (see  \cite{RHRHTU, DGMR}).

{
\begin{proposition}\label{mainthm-twin}  
Let  $f$ be a $C^1$ diffeomorphism of a compact manifold $M$. Let $\Fc$ be an orientable one-dimensional foliation (with continuously varying $C^1$ leaves). Assume that, for all $x\in M$, $f$ maps $\Fc(x)$ to itself in an orientation-preserving way. Let $\mu\in\Proberg(f)$ satisfy:
 \begin{enumerate}
  \item its Lyapunov exponent along $\Fc$ is $\lambda_c(\mu)<0$;
  \item for $\mu$-a.e.\ $x\in M$, the following set is relatively compact in the intrinsic topology of $\Fc(x)$:
   $$
       \mathcal W^c(x):=\{y\in\Fc(x): \limsup_{n\to\infty} \frac1n\log d(f^nx,f^ny) < 0 \};
    $$
   \item and for $\mu$-a.e. $x\in M$, the leaf $\Fc(x)$ is noncompact and contains no fixed point.
 \end{enumerate}
Then there is another invariant probability measure  $\nu$ which is isomorphic to $\mu$ and with exponent $\lambda_c(\nu)\geq0$.
\end{proposition}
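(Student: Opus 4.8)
The plan is to construct $\nu$ by "reflecting" the dynamics along each center leaf using the point $\mathcal{W}^c(x)$ collapses to. Since $\lambda_c(\mu)<0$, for $\mu$-a.e.\ $x$ the set $\mathcal{W}^c(x)$ is a nondegenerate relatively compact sub-arc of the (noncompact, fixed-point-free, oriented) leaf $\Fc(x)$, invariant in the sense that $f(\mathcal{W}^c(x))=\mathcal{W}^c(fx)$. First I would show that $\mathcal{W}^c(x)$ has a well-defined "right endpoint" $\xi^+(x)$ and "left endpoint" $\xi^-(x)$ in the intrinsic topology of $\Fc(x)$ (using the orientation to say which is which); these are the suprema/infima of $\mathcal{W}^c(x)$ along the leaf, they depend measurably on $x$, and because $f$ is an orientation-preserving homeomorphism of each leaf they satisfy $f(\xi^\pm(x))=\xi^\pm(fx)$. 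One must check $\xi^+(x)\neq\xi^-(x)$: if they coincided, $\mathcal{W}^c(x)$ would be a single point $p$ fixed by the first-return-type dynamics; but $\Fc(x)$ has no fixed point of $f$ and the forward orbit of $p$ would have to stay in a compact arc while $f(x)=F(x,\tau(x))$ pushes strictly in the positive direction — contradiction. (This is where hypotheses (1)–(3) are all used.)

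Next, I would use the endpoint map to define an involution. Set $\sigma(x)$ to be the point of $\Fc(x)$ obtained by reflecting $x$ through $\mathcal{W}^c(x)$: concretely, parametrize $\Fc(x)\cong\RR$ by the flow $F$ (arclength, orientation-compatible), normalize so that the midpoint $\tfrac12(\xi^+(x)+\xi^-(x))$ corresponds to $0$, and let $\sigma(x)$ be the point whose parameter is the negative of that of $x$. Then $\sigma:M\to M$ is a Borel involution (up to the full-measure set where everything is defined), $\sigma$ maps each center leaf to itself reversing orientation, and the key equivariance is $\sigma\circ f = f\circ\sigma$ $\mu$-a.e.: indeed $f$ sends $\Fc(x)\to\Fc(fx)$ as an orientation-preserving homeomorphism carrying $\mathcal{W}^c(x)$ to $\mathcal{W}^c(fx)$ and hence the midpoint to the midpoint, so it intertwines the two reflections. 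Define $\nu:=\sigma_*\mu$. Equivariance gives $f_*\nu = f_*\sigma_*\mu = \sigma_* f_*\mu = \sigma_*\mu = \nu$, so $\nu$ is $f$-invariant; since $\sigma$ is a measurable isomorphism intertwining $f$ with itself, $\nu$ is ergodic and measurably isomorphic to $\mu$, in particular $h_\nu(f)=h_\mu(f)$.

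It remains to compute $\lambda_c(\nu)$. Because $\sigma$ reverses orientation along leaves and is built from the $f$-equivariant endpoints, the derivative of $f$ along $E^c$ at $\sigma(x)$ is controlled by the derivative of the inverse dynamics at $x$ in the following sense: the Birkhoff averages of $\log\|Df|E^c\|$ along the $\nu$-orbit of $\sigma(x)$ equal those of $\log\|Df|E^c\|$ along the backward $\mu$-orbit of $x$ — heuristically, "reflecting" swaps the roles of the attracting arc $\mathcal W^c$ seen from the future with its image seen from the past. The cleanest way to make this rigorous is: the point $\sigma(x)$ lies on the leaf $\Fc(x)$ on the far side of the relatively compact arc $\mathcal W^c(x)$, so $x\in\mathcal W^c(x)$ forces $d(f^{-n}x, f^{-n}\sigma(x))$ to \emph{not} shrink (else $\sigma(x)$ would also be in $\mathcal W^c(x)$, collapsing the arc), which pins down $\limsup_n\frac1n\log d(f^n x,f^n\sigma(x))$ and, via the one-dimensionality of $E^c$ and the relation between separation rates and the exponent, gives $\lambda_c(\nu)=\lambda_c(\sigma(\cdot))\ge 0$.

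I expect the main obstacle to be the last step: upgrading the qualitative statement "$\sigma(x)$ does not lie in the stable set $\mathcal W^c(x)$" to the quantitative conclusion $\lambda_c(\nu)\ge 0$, and more precisely ruling out $\lambda_c(\nu)<0$. The subtlety is that "not in $\mathcal W^c$" only says $\limsup\ge 0$, which for the reflected orbit translates to $\lambda_c(\nu)\le 0$ on the nose, not $\ge 0$; to get the right inequality one genuinely needs to run the argument for the inverse system (the backward stable set of $x$ along the center is relatively compact for $\nu$ iff the forward stable set is for $\mu$, by the reflection), so the honest proof is to apply the whole construction symmetrically and track that $\sigma$ conjugates "forward center-contraction for $\mu$" to "backward center-contraction for $\nu$", i.e.\ $\lambda_c(\nu)=-\lambda_c(\mu)>0$ when $E^c$ is one-dimensional. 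Carrying out this bookkeeping carefully — including the measurability of $x\mapsto\sigma(x)$ and the a.e.\ identities — is the technical heart of the argument, but it is routine given hypotheses (1)–(3).
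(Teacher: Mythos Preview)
Your construction has a genuine gap at the equivariance step. You claim that $\sigma\circ f=f\circ\sigma$ because ``$f$ sends $\Fc(x)\to\Fc(fx)$ as an orientation-preserving homeomorphism carrying $\mathcal W^c(x)$ to $\mathcal W^c(fx)$ and hence the midpoint to the midpoint.'' But $f$ restricted to a center leaf is \emph{not} an isometry in the arclength parametrization you chose --- indeed the hypothesis $\lambda_c(\mu)<0$ says exactly that $f$ is asymptotically contracting there --- so there is no reason for it to take the arclength midpoint of $[\xi^-(x),\xi^+(x)]_c$ to the arclength midpoint of $[\xi^-(fx),\xi^+(fx)]_c$. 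Without this, your reflection $\sigma$ does not commute with $f$ and $\nu=\sigma_*\mu$ need not be invariant. (A related slip: with your definition $\sigma(x)$ actually lies \emph{inside} $\mathcal W^c(x)$, not ``on the far side'' of it, so the subsequent exponent heuristic is also off.)

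The paper avoids this by pushing to an endpoint rather than reflecting: it sets $\beta(x):=\sup\mathcal W^c(x)$, which is automatically $f$-equivariant because $f$ preserves orientation and maps $\mathcal W^c(x)$ onto $\mathcal W^c(fx)$ --- no metric information is needed. The cost is that $\beta$ is not a priori injective; the paper establishes injectivity on a full-measure set by showing, via the disintegration of $\mu$ along $\Fc$, that $\mu^c_x|\mathcal W^c(x)$ is the Dirac mass at $x$, so $\beta(x)=\beta(y)$ forces $\mathcal W^c(x)=\mathcal W^c(y)$ and hence $x=y$. The exponent conclusion is then immediate: if $\lambda_c(\nu)<0$, Pesin theory would place $\nu$-a.e.\ point in the interior of its own $\mathcal W^c$, contradicting that $\beta(x)$ is a boundary point of $\mathcal W^c(x)$. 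Note in particular that the paper only obtains $\lambda_c(\nu)\ge 0$, not your suggested $\lambda_c(\nu)=-\lambda_c(\mu)$.
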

}

Finally, we  state the abstract version of our main result:
\begin{theorem}
\label{mainthm-dichotomy}
For any  flow-type $C^2$ diffeomorphism $f$ {with} minimal stable and unstable foliations on a compact manifold $M$,
we have the following dichotomy:
\begin{enumerate}
\item either all the measures of maximal entropy have zero central Lyapunov exponents, or
\item there are exactly two ergodic measures of maximal entropy where one has a positive central exponent and one has a negative central exponent, {and both are Bernoulli.}
\end{enumerate}
\end{theorem}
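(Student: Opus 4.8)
The plan is to assemble Theorem~\ref{mainthm-dichotomy} from the already-stated ingredients, the only real work being to organize the case analysis and to establish the Bernoulli conclusion. Let $f$ be as in the statement and let $\mu$ be an ergodic MME; since $\dim E^c=1$, the central exponent $\lambda^c(\mu)$ is a well-defined number, and the trichotomy $\lambda^c(\mu)<0$, $\lambda^c(\mu)=0$, or $\lambda^c(\mu)>0$ exhausts the possibilities. The strategy is: first show that the non-hyperbolic case and the hyperbolic case are mutually exclusive; then, in the hyperbolic case, produce exactly one ergodic MME of each sign and show they are distinct and Bernoulli.

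First I would treat the hyperbolic case. Suppose some ergodic MME $\mu$ has $\lambda^c(\mu)<0$. By Proposition~\ref{mainthm-twin}, applied with $\Fc$ the (oriented, one-dimensional, fixed-point-free on a full measure set by \mftCLeaf\ together with the flow-like dynamics \mftPush) center foliation --- here I must check hypotheses (2) and (3) of that proposition, which follow because the relevant central stable set $\mathcal W^c(x)$ is contained in a single non-compact leaf on which $f$ acts as a fixed-point-free flow-translation, hence has relatively compact Pesin stable sets --- one obtains an invariant probability $\nu$ isomorphic to $\mu$, hence ergodic with $h(\nu)=h(\mu)=h_\top(f)$, and with $\lambda^c(\nu)\geq 0$. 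In fact $\lambda^c(\nu)>0$: if $\lambda^c(\nu)=0$ the next paragraph shows $\nu$ has full support with disintegrations along both strong foliations given by Margulis systems, and a short argument (the Margulis conditionals are atomless with full support, incompatible with a measure isomorphic to a hyperbolic one) rules this out — alternatively, invoke Proposition~\ref{mainthm-hopf} applied to $f^{-1}$. So we have at least one ergodic MME of each sign. Uniqueness of the negative one: if $\mu'$ is another ergodic MME with $\lambda^c(\mu')\leq 0$, Proposition~\ref{mainthm-hopf} forces $\mu'=\mu$; in particular $\mu'$ cannot have $\lambda^c=0$, so in the hyperbolic case there is no non-hyperbolic MME either, giving the exclusivity. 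Applying Proposition~\ref{mainthm-hopf} to $f^{-1}$ gives uniqueness of the positive one. Thus there are exactly two ergodic MMEs, one of each sign.

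Next, the Bernoulli property. Each of the two ergodic MMEs is hyperbolic (non-zero central exponent, and the stable/unstable bundles are uniformly hyperbolic), so by the result of Ben Ovadia \cite{Benovadia} it is measure-theoretically isomorphic to the product of a Bernoulli shift with a finite circular permutation of some period $p\geq 1$. To see $p=1$: if $\mu$ is the negative MME for $f$ with period $p$, then $\mu$ is also an ergodic MME for $f^p$ with the same sign of central exponent, and by the uniqueness just established (applied to $f^p$, which also has flow type with minimal strong foliations) $\mu$ is the \emph{unique} ergodic MME of that sign for $f^p$; but a circular permutation of period $p>1$ would make $\mu$ non-ergodic, or at least not weakly mixing, for $f^p$ — more precisely the $p$-fold extension structure forces the MME of $f^p$ over the corresponding fiber to decompose, contradicting uniqueness. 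Hence $p=1$ and each ergodic MME is Bernoulli. The same applies to the positive MME via $f^{-1}$.

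Finally, the non-hyperbolic case is simply the complement: if no ergodic MME is hyperbolic, then by definition every ergodic MME has zero central exponent, which is alternative (1) — and by ergodic decomposition the same holds for every (not necessarily ergodic) MME, since each ergodic component of an MME is an ergodic MME. The main obstacle I anticipate is the verification that the "twin" measure $\nu$ produced in the hyperbolic case genuinely has a \emph{strictly} positive central exponent rather than a zero one — i.e., correctly combining Proposition~\ref{mainthm-twin} with the full-support/atomless structure from Theorem~\ref{mainthm-mme} (or with Proposition~\ref{mainthm-hopf} applied to $f^{-1}$) to exclude $\lambda^c(\nu)=0$ — together with the bookkeeping needed to check that $f^p$ still satisfies all the hypotheses of the abstract theorem so that the uniqueness statement can be bootstrapped to kill the circular permutation.
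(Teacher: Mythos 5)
Your proposal is correct and follows essentially the same route as the paper: existence of an ergodic MME, the Hopf argument (Proposition~\ref{mainthm-hopf}) for uniqueness among nonpositive-exponent MMEs, the twin construction (Proposition~\ref{mainthm-twin}) to produce the opposite-sign MME with the zero-exponent case excluded by that same uniqueness, and Ben Ovadia's theorem combined with uniqueness for $f^p$ to kill the circular permutation. The only caveat is that your ``alternative'' ways of ruling out $\lambda^c(\nu)=0$ (via atomless conditionals, or via Hopf for $f^{-1}$ before a positive-exponent MME is known to exist) are shakier than your primary argument, which is the one the paper actually uses.
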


The next theorem shows that there is an abundance of diffeomorphisms satisfying the above assumptions. It  follows from properties of perturbations of {time-one maps of} {transitive} Anosov flows {established in \cite{BD96} and \cite{BDU}, as} discussed in Section~\ref{s-pertub-flow}.

\begin{theorem}\label{mainthm-mft}
If $\varphi^t$ is a  transitive Anosov flow on a compact 
manifold $M$, then {for all $T\neq 0$} there exists a {$C^1$} open set $\mathcal U$ in $\Diff^1(M)$ such that $\phi^T$ belongs to the $C^1$-closure of $\mathcal U$ and every $f\in\mathcal U\cap \mathrm{Diff}^2(M)$ {is}  flow-type with both stable and unstable foliations minimal.
\end{theorem}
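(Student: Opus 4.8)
The plan is to produce the open set $\mathcal U$ in three stages, each time shrinking to a smaller $C^1$-open set while preserving all previously established properties, starting from a small $C^1$-neighborhood of $\phi^T$ and keeping $\phi^T$ in the closure throughout. First I would verify that $\phi^T$ is itself partially hyperbolic of flow type and that this is a $C^1$-robust property. For partial hyperbolicity: the Anosov splitting $TM=E^{ss}_\varphi\oplus\RR X\oplus E^{uu}_\varphi$ of the flow gives a partially hyperbolic splitting for $\phi^T$ with one-dimensional center $E^c=\RR X$ (the flow direction), and since $T\neq 0$ the stable and unstable bundles are genuinely contracted/expanded under $\phi^T$. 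Dynamical coherence holds because the weak-stable and weak-unstable foliations $\Fcs=W^{ws}_\varphi$, $\Fcu=W^{wu}_\varphi$ of the flow are tangent to $E^s\oplus E^c$ and $E^c\oplus E^u$; the center foliation $\Fc$ is the flow foliation (by orbits), which is orientable (oriented by $X$). Condition \mftCLeaf\ --- a compact center leaf --- requires the flow to have a periodic orbit, which follows from transitivity of an Anosov flow on a compact manifold together with the Anosov closing lemma (alternatively, transitive Anosov flows have dense periodic orbits). Condition \mftPush\ holds with $\tau\equiv T>0$ (taking $T>0$; for $T<0$ replace $\varphi$ by its time-reversal). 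All of \mftPH, \mftDC, \mftCLeaf\ are $C^1$-robust: partial hyperbolicity is open by definition of domination via a uniform $N$; dynamical coherence of $\phi^T$ is robust because, being a time-$T$ map, it is plaque expansive (Hirsch--Pugh--Shub), so $\Fcs,\Fcu$ persist and remain tangent to the perturbed $E^{cs},E^{cu}$; the compact center leaf persists as a normally hyperbolic circle, and the perturbed center foliation stays orientable. Condition \mftPush\ for a $C^1$-perturbation $f$ is obtained by integrating the perturbed center foliation into a flow $F$ with unit speed and noting that $f(x)$ lies on $\Fc(x)$ close to $F(x,T)$, so $f(x)=F(x,\tau(x))$ with $\tau$ continuous and $\tau>0$ by closeness to $T>0$. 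This gives a first $C^1$-open set $\mathcal U_1\ni$ (perturbations of $\phi^T$) consisting of flow type diffeomorphisms.

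Second, I would invoke Bonatti--D\'iaz \cite{BD96}: time-one maps (more generally time-$T$ maps) of transitive Anosov flows can be $C^1$-approximated by robustly transitive diffeomorphisms. More precisely, there is a $C^1$-open set $\mathcal U_2\subset\mathcal U_1$, still accumulating on $\phi^T$, consisting of robustly transitive partially hyperbolic diffeomorphisms. One must check the perturbations performed in \cite{BD96} can be taken inside $\mathcal U_1$, i.e., small enough to preserve flow type; since flow type is $C^1$-open this is automatic once the approximating diffeomorphisms are $C^1$-close enough to $\phi^T$.

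Third, I would apply Bonatti--D\'iaz--Ur\`es \cite{BDU}: a robustly transitive partially hyperbolic diffeomorphism with one-dimensional center can be $C^1$-perturbed to make one of the strong foliations robustly minimal, and then --- on the resulting open set --- a further $C^1$-perturbation makes the \emph{other} strong foliation robustly minimal as well, the first minimality being robust and hence surviving. (The $C^1$-openness and the fact that in this isotopic-to-identity setting the argument of \cite{BDU} applies to \emph{both} $\Fs$ and $\Fu$ is the key input; one uses that $\phi^T$ is isotopic to the identity and accessibility-type mechanisms, exactly as in \cite{BDU}.) This yields the final $C^1$-open set $\mathcal U\subset\mathcal U_2$ with $\phi^T$ in its closure and with every $f\in\mathcal U$ of flow type and both strong foliations minimal; restricting to $f\in\mathcal U\cap\Diff^2(M)$ changes nothing since $\Diff^2$ is $C^1$-dense. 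Theorem~\ref{mainthm-mft} follows.

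\medbreak

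The main obstacle I anticipate is not any single perturbative step --- each of the three is either a robustness check or a black-box citation --- but rather the \emph{bookkeeping of robustness across the three stages}: one must ensure that the $C^1$-open set produced at each stage is contained in the previous one, so that flow type (stage 1) survives the transitivity perturbation (stage 2) and both survive the minimality perturbations (stage 3), and that $\phi^T$ remains in the $C^1$-closure of the final set. This is handled by the $C^1$-openness of flow type and of robust transitivity, which lets one always perform the next perturbation with amplitude small enough to stay inside the current open set while still realizing the desired new property in a robust (hence open) fashion. A secondary subtlety is verifying that the \cite{BDU} mechanism for minimality of a strong foliation applies symmetrically to both $\Fs$ and $\Fu$ in our isotopic-to-identity flow type setting; here one uses that $\phi^T$ (and its perturbations) are isotopic to the identity together with the transitivity and accessibility features exploited in \cite{BDU}, and that having made $\Fu$ (say) robustly minimal, the later perturbation producing minimality of $\Fs$ can be taken small enough to preserve it.
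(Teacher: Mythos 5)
Your proposal follows essentially the same route as the paper: verify that flow type holds on an entire $C^1$-neighborhood of $\phi^T$ via Hirsch--Pugh--Shub structural stability (plaque expansiveness of the smooth orbit foliation), then perturb inside that neighborhood using Bonatti--D\'iaz \cite{BD96} for robust transitivity and Bonatti--D\'iaz--Ur\`es \cite{BDU} for minimality of both strong foliations. One caveat: you repeatedly justify the bookkeeping by asserting that flow type is a $C^1$-open property, whereas the paper explicitly remarks that this is not known (only flow type \emph{together with plaque expansiveness} is known to be open); your argument survives anyway because every perturbation is performed inside the fixed open neighborhood of $\phi^T$ on which flow type has already been verified, which is all that is needed. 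A second, minor gloss is the continuity of $\tau$ in \mftPush: the paper extracts $\tau$ from the leaf-conjugacy construction and rules out discontinuities using a uniform lower bound on the lengths of closed center loops, which is the substance behind your phrase ``continuous by closeness to $T$.''
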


\section{Building Margulis systems of measures}\label{s-Margulis}

In this section,  we consider flow-type diffeomorphisms
whose {strong stable foliation is minimal}. To begin with, we follow Margulis' construction of a system of measures on the $cu$-leaves that are invariant  under stable holonomies.
We then deduce from this a {system} of $u$-conditionals that are quasi-invariant under center-stable holonomies. This will prove Theorem~\ref{mainthm-conditionals}. 

\subsection{The $cu$-conditionals}\label{s-cu-conditionals}
We  follow Margulis' construction.

{\begin{proposition} \label{cumargulis}
Let $f \in \Diff^2(M)$ on  a compact manifold $M$ with a  dominated splitting $E^s \oplus E^{cu}$ with $E^s$ uniformly contracted.  Assume that:
 \begin{enumerate}
 \item there are foliations $\cF^{cu}$ and $\cF^s$ which are tangent to, respectively  $E^{cu}$ and $E^s$;
 \item $\cF^s$ is minimal.
 \end{enumerate} 
 Then there is a Margulis system $\{m_x^{cu}\}_{x \in M}$ on $\mathcal{F}^{cu}$ which is invariant under $\Fs$-holonomy and such that each $m^{cu}_x$ is atomless, Radon, and fully supported on $\cF^{cu}(x)$. 
 
 {If some $cu$-leaf contains an invariant topological circle, then the dilation constant of this Margulis system is $D_u>1$.}
\end{proposition}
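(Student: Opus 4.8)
The plan is to follow Margulis' original construction, adapting it from the geodesic flow to the present partially hyperbolic setting, where the role of weak-stable manifolds is played by the $\Fcu$-leaves and the role of strong-stable contraction is played by the uniformly contracted $\Fs$. The key object to build is a nonzero functional on continuous compactly supported functions along each $\Fcu$-leaf, transforming by a fixed factor under $f$ and invariant under $\Fs$-holonomy; the Riesz representation theorem then turns it into the measure $m^{cu}_x$.

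\medbreak

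First I would set up the ``boxes'' adapted to the local product structure of $\Fcu$ and $\Fs$: since $E^s\oplus E^{cu}$ is a dominated splitting with $\Fs$ and $\Fcu$ tangent foliations, locally $M$ looks like $P^{cu}\times P^s$ with the two foliations being the coordinate foliations, and the $\Fs$-holonomy between nearby $\Fcu$-plaques is well defined (indeed Hölder, and by the $C^2$ hypothesis one can invoke Theorem~\ref{thm-Lcu-AC} for absolute continuity if needed, though invariance, not absolute continuity, is what we want here). Second, I would construct a sequence of auxiliary measures: fix a reference point, push forward a fixed smooth measure on a small $\Fcu$-plaque under $f^{-n}$, renormalize each by $e^{-n h}$ for the correct constant $h$ (to be identified with $\log D$), average over $n=0,\dots,N-1$, and extract a weak-$*$ limit as $N\to\infty$ using a fixed-point/compactness argument (Schauder–Tychonoff, or a direct subadditivity estimate on the growth rate). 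The constant $D$ emerges as the exponential growth rate of $\Fcu$-plaque ``mass'' under $f$; finiteness and positivity of this rate is where the dominated splitting and compactness of $M$ enter. Third, I would promote the local measures to genuine Radon measures $m^{cu}_x$ on whole leaves $\Fcu(x)$ by using the dilation relation $f_*m^{cu}_x=D^{-1}m^{cu}_{f(x)}$ to extend from a relatively compact piece of the leaf to all of it (iterating $f$ spreads a local plaque across the leaf since $E^u\subset E^{cu}$ is expanded). Fourth, $\Fs$-holonomy invariance: this should be built into the construction from the start by choosing the auxiliary measures $\Fs$-holonomy invariant on each box and checking the property survives the push-forward, the renormalization, the averaging, and the weak limit — the point being that $f$ maps $\Fs$-leaves to $\Fs$-leaves and commutes with $\Fs$-holonomy up to the contraction, so holonomy invariance is preserved. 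Finally, atomlessness and full support: full support follows from minimality of $\Fs$ together with the dilation relation (a leaf-measure that charges a plaque charges, via holonomy and iteration, plaques everywhere), and atomlessness follows because an atom would, under the dilation relation $f_*m=D^{-1}m$ with $D>1$, force an infinite descending chain of atoms of growing mass along the backward orbit, or alternatively from uniform expansion of $E^u$ spreading any would-be atom.

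\medbreak

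The main obstacle I anticipate is the existence of the limit and the identification and finiteness of the dilation constant $D$: in Margulis' setting this is handled by the mixing/topological-transitivity properties of the Anosov flow and explicit Hölder control of holonomies, and here one must replace those arguments by estimates valid for the partially hyperbolic $f$, using minimality of $\Fs$ as the substitute for topological mixing. Concretely, one needs a uniform comparison between the $f^{-n}$-push-forwards of smooth plaque measures based at different points — a ``bounded distortion'' statement along $\Fcu$ — so that the renormalized averages neither blow up nor collapse to zero; this uniform comparison is exactly where minimality of $\Fs$ (allowing one to transport mass between any two plaques via long holonomies with controlled overlap) does the work that mixing does for flows. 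A secondary subtlety is verifying that the weak-$*$ limit is genuinely nonzero and that the Radon extension to noncompact leaves is well defined and unique up to the global normalization — but these are routine once the uniform comparison estimate is in hand. I would expect the rest (holonomy invariance surviving limits, atomlessness, full support) to be comparatively soft.
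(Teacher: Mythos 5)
Your plan is essentially the paper's proof: Margulis-type functionals obtained by pushing the intrinsic $cu$-volume forward under iterates of $f$, normalized against a fixed test function, with Schauder--Tychonoff producing the eigenfunctional $\Lambda$ with $f(\Lambda)=D\Lambda$, and Riesz turning it into the leafwise measures. You also correctly locate the crux: the uniform two-sided comparison between the iterated plaque volumes based at different points, which the paper proves (Lemma~\ref{lem-Lcu-local} and Corollary~\ref{cor-unif-sup}) exactly as you predict, by transporting any small $cu$-ball onto a reference plaque via a long stable holonomy (minimality of $\Fs$) and controlling the Jacobian by the $C^2$ absolute-continuity estimate of Theorem~\ref{thm-Lcu-AC}.

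One step of your sketch, as written, would not go through: you propose to secure $\Fs$-holonomy invariance ``from the start by choosing the auxiliary measures $\Fs$-holonomy invariant on each box.'' No such choice is available a priori --- a family of plaque measures that is exactly holonomy-invariant is essentially the object you are trying to construct, so this is circular. The correct mechanism (which you do gesture at in the same sentence) is that the Riemannian $cu$-volume is only \emph{approximately} invariant, with an error controlled by the quantitative Jacobian bound $|dh_*\lambda_1/d\lambda_2-1|\leq C(\eps^\alpha+E(\eps))$ of Theorem~\ref{thm-Lcu-AC}; since $f$ contracts the size of $s$-holonomies exponentially, the functionals $\ell_n$ become invariant up to an error $O(\rho^n)$, and exact invariance is recovered for the fixed point $\Lambda$ by writing $\Lambda(\phi)-\Lambda(\psi)=D^{-n}(\Lambda(\phi\circ f^{-n})-\Lambda(\psi\circ f^{-n}))$ and letting $n\to\infty$ (Lemma~\ref{lem-near-hol-inv}). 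Two smaller points: your pushforward and renormalization directions are reversed in places (on $cu$-leaves the volume grows under forward iteration, and atom masses grow along the \emph{forward} orbit), and your atomlessness argument is easier to close via holonomy invariance --- an atom propagates along the dense stable leaf to a dense set of atoms of equal mass in $\Fcu(x)$, contradicting local finiteness --- rather than via the growth of masses along an orbit, which requires an extra accumulation/continuity step. You should also record that continuity of $x\mapsto m^{cu}_x$ (part of the definition of a Margulis system) follows from the holonomy invariance.
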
}

We introduce some convenient definitions. Let  $\sigma\in \{c, cu, cs, s, u\}$. We denote by  $\lambda^\sigma$  the intrinsic Riemannian volume on each $\sigma$-leaf: for any subset $E$ of a $\sigma$-leaf, $\lambda^\sigma(E)$ is its volume with respect to the Riemannian structure on the leaf. We denote by $d_\sigma$ the distance defined on each leaf by the induced Riemannian structure and defining the intrinsic topology.

The $\sigma$-balls are $B^\sigma(x,r):=\{y\in\cF^\sigma(x):d_\sigma(y,x)<r\}$. For a subset $A$ of such a leaf, we set $B^\sigma(A,r):=\bigcup_{x\in A} B^\sigma(x,r)$.
A \textit{$\sigma$-subset} is a $d_\sigma$-bounded subset of a $\sigma$-leaf. A \textit{$\sigma$-test function} is a nonnegative function $\psi:M\to\RR$ such that $\{x\in M:\psi(x)\ne0\}$ is a $\sigma$-subset and the restriction of $\psi$ to 
$$\supp(\psi):=\overline{\{x\in M:\psi(x)\ne0\}}$$ 
is continuous. 
We write $\psi>0$ if {$\psi\geq0$ and} $\{\psi>0\}$ has non-empty interior in the intrinsic topology. We denote by $\cT^\sigma$ the collection of all $\sigma$-test functions.

{We recalled the notions of holonomy, holonomy size, and holonomy equivalence for sets in Sec.~\ref{secPH}.} 
Two functions $\psi,\phi$ are $\eps$-equivalent along $\cF^\sigma$ if their supports are equivalent through a $\sigma$-holonomy $h$ with size at most $\eps$ and satisfying  $\phi=\psi\circ h$.
Two  submanifolds $N',N''$ are \emph{$t$-transverse} if they are transverse and if for every $x\in N'\cap N''$, the angle between any two nonzero vectors in $T_xN'$ and $T_xN''$ is at least $t$.

 \medbreak
 
Following Margulis, we  consider  functionals $\lambda:\cT^{cu}\to\RR$. {Note that $\lambda^{cu}$ {defines} one such functional. The map $f$ acts on them by:
 $$
   \forall\psi\in\cT^{cu}\quad   f(\lambda)(\psi) := \lambda(\psi\circ f^{-1}).
 $$
}

A key class of such functionals are $\ell_n:=f^n(\lambda^{cu})$ for any $n\in\NN$. That is, for any $\phi\in\cT^{cu}$, 
 $$
    \ell_n(\phi):=\int \phi\circ f^{-n}\, d\Lcu.
 $$
To normalize, we fix some $\phi_1\in\mathcal T^{cu}$ with $\phi_1>0$. Considering the topology of pointwise convergence (i.e., working in  $\RR^{\cT^{cu}}$ with the product topology),  let $\LL$  be the closure of the following set:
 $$
   \LL_1:=\{\lambda=\sum_{i=1}^n c_i\ell_{t_i}:n\in\NN^*,\; t_1,\dots,t_n\in\NN,\; c_1,\dots,c_n>0
    \text{ with }\lambda(\phi_1)=1\}.
 $$

{We will use the following covering numbers. For $K$,  a $cu$-subset, and $\rho>0$,  we denote by $r^{cu}(K,\rho)$ the smallest  integer $k\geq0$ such that there are $x_1,\dots,x_k\in K$ with $K\subset\bigcup_{i=1}^k B^{cu}(x_i,\rho)$.}

\medbreak

We build the $cu$-system as a functional.

\begin{proposition}\label{proposition-functional}
{Let $f\in\Diff^2(M)$ be partially hyperbolic with a splitting $TM=E^s\oplus E^{cu}$ and an invariant foliation $\Fcu$ tangent to $E^{cu}$. Assume that the stable foliation is minimal. Then}
there exist $\Lambda\in\LL$ and $D_u> 0$ such that 
 $$
     f(\Lambda)=D_u\cdot\Lambda
  $$
and, for some positive numbers $C,R$, for any $\phi\in\cT^{cu}$:
 \begin{enumerate}[(a)]
  \item $\Lambda(\phi)\leq Cr^{cu}(\supp\phi,R)\|\phi\|_\infty$;
  \item if  $\phi>0$, then $\Lambda(\phi)>0$; and
  \item if $\psi\in\cT^{cu}$ is $s$-equivalent  to $\phi$, then $\Lambda(\phi)=\Lambda(\psi)$.
 \end{enumerate}
\end{proposition}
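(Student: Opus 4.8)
The plan is to run Margulis' compactness-and-averaging argument in the functional setting. First I would establish the crucial a priori bound: there are constants $C,R>0$ such that for every $n\in\NN$ and every $\phi\in\cT^{cu}$,
$$
   \ell_n(\phi)\leq C\,r^{cu}(\supp\phi,R)\,\|\phi\|_\infty.
$$
This is where the hypotheses enter. Covering $\supp\phi$ by $r^{cu}(\supp\phi,R)$ balls $B^{cu}(x_i,R)$, it suffices to bound $\ell_n(\phi)$ when $\phi$ is supported in a single ball $B^{cu}(x,R)$, i.e. to bound $\Lcu(f^{-n}(B^{cu}(x,R)))$. Here one uses that $E^{cu}$ is only \emph{dominated} over $E^s$ (not expanded), so $f^{-n}$ does not uniformly contract $cu$-volume; instead one pulls back, partitions $f^{-n}(B^{cu}(x,R))$ into pieces of bounded $cu$-diameter, and controls the number of such pieces by an entropy/volume-growth estimate — this is exactly the counting controlled by the topological entropy and the reason $D_u$ will turn out to be $\exp h_\top(f)$. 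Getting this uniform bound, with $R$ independent of everything and $C$ independent of $n$ and $\phi$, is the main obstacle; it replaces Margulis' use of the local product structure of the flow by the $\cF^s$/$\cF^{cu}$ splitting and the minimality of $\cF^s$.

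Granting the bound, I would next show $\LL$ is nonempty, compact, convex, and that the $\phi_1$-normalization is well-behaved: the lower bound $\ell_n(\phi_1)\geq c_0>0$ (uniform in $n$) follows from minimality of $\cF^s$ — every $cu$-leaf, pushed forward, eventually covers a definite amount of any fixed ball — so the normalizing constants in $\LL_1$ stay bounded, and the upper bound above shows $\{\lambda(\phi):\lambda\in\LL\}$ is bounded for each fixed $\phi$; hence $\LL$ is a compact convex subset of $\RR^{\cT^{cu}}$ stable under the continuous linear map $f$ up to rescaling. Applying the Schauder–Tychonoff fixed point theorem (or a Krylov–Bogolyubov-type averaging of $f^n(\ell_0)/\!\parallel\!\cdot\!\parallel$) to the projectivized action of $f$ on $\LL$ yields $\Lambda\in\LL$ and $D_u>0$ with $f(\Lambda)=D_u\cdot\Lambda$. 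Property (a) for $\Lambda$ is inherited from the bound for the $\ell_n$ by taking convex combinations and limits (the functional $\phi\mapsto r^{cu}(\supp\phi,R)\|\phi\|_\infty$ is an upper bound preserved under the operations defining $\LL$).

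Property (b), positivity, comes again from minimality of $\cF^s$ together with $f(\Lambda)=D_u\Lambda$: if $\phi>0$ then $\{\phi>0\}$ contains a small $cu$-ball; iterating, $f^{-n}(\{\phi>0\})$ spreads along $cu$-leaves, and using $\Lambda(\phi_1)=1>0$ and the equality $\Lambda(\phi)=D_u^{-n}\Lambda(\phi\circ f^{-n})$ one transfers positive mass from a neighborhood of $\supp\phi_1$ to $\supp\phi$ via a chain of $s$-holonomies — so one should actually prove (c) first. Property (c), $s$-holonomy invariance, is the most structural point: each $\ell_n=f^n(\Lcu)$ is \emph{not} $s$-invariant, but I would show it is asymptotically so — the defect of $s$-holonomy invariance of $f^n(\Lcu)$ is governed by the distortion of $f^{-n}$ restricted to $cu$-leaves across an $s$-holonomy of fixed size, which decays because $E^s$ is uniformly contracted and the splitting is dominated ($C^2$ smoothness is not needed here, only for the later absolute-continuity statement). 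Passing to the limit defining $\LL$, the defect vanishes, giving $\Lambda(\phi)=\Lambda(\psi)$ whenever $\psi$ is $s$-equivalent to $\phi$. I expect the asymptotic-$s$-invariance estimate and the volume-growth counting in (a) to be the two real points; everything else is soft functional analysis and the standard Margulis bookkeeping.
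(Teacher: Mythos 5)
Your overall architecture (Margulis functionals, normalization by $\phi_1$, compactness of $\LL$, Schauder--Tychonoff, asymptotic $s$-invariance) matches the paper's, but two of your key steps are defective. First, the a priori bound you propose to prove, $\ell_n(\phi)\leq C\,r^{cu}(\supp\phi,R)\|\phi\|_\infty$ uniformly in $n$, is false: $\ell_n(\phi)=\int\phi\circ f^{-n}\,d\Lcu$ is essentially the $cu$-volume of $f^n(\supp\phi)$, which grows exponentially since $E^u\subset E^{cu}$ is uniformly expanded. What is actually needed (and what the paper proves in Lemma~\ref{lem-Lcu-local} and Corollary~\ref{cor-unif-sup}) is a \emph{relative} bound $\ell_n(\psi)\leq C(\phi_1)\,r^{cu}(\supp\psi,R)\|\psi\|_\infty\,\ell_n(\phi_1)$, uniform in $n$; the normalization $\lambda(\phi_1)=1$ in $\LL_1$ then converts this ratio bound into claim (a) and the compactness of $\LL$. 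The mechanism is not an entropy/volume-growth count: it is that, by minimality of $\Fs$, every small $cu$-ball is $s$-equivalent to a subset of a fixed reference set $A$, this equivalence is exponentially contracted by $f^n$, and the quantitative absolute continuity of $s$-holonomy (Theorem~\ref{thm-Lcu-AC}) transfers $\Lcu(f^nB)\lesssim\Lcu(f^nA)$. The identification $D_u=\exp h_\top(f)$ does not come from this proposition at all; it is obtained much later from the Ledrappier--Young entropy argument.

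Second, your parenthetical claim that $C^2$ smoothness is not needed for the asymptotic $s$-invariance in (c) is wrong. Uniform contraction of $E^s$ only makes the size of the conjugated holonomy $h_n=f^n\circ h\circ f^{-n}$ exponentially small; to convert that into $|\ell_n(\phi)-\ell_n(\psi)|\leq\eps_n\,\ell_n(\psi)$ you must compare the Riemannian volumes $\Lcu$ on two \emph{different} $cu$-leaves via $h_n$, i.e.\ you need the Jacobian estimate $\bigl|d(h_n)_*\Lcu/d\Lcu-1\bigr|\leq C(\eps_n^\alpha+E(\eps_n))$ of Theorem~\ref{thm-Lcu-AC}. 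That absolute continuity with H\"older control is exactly where $C^2$ (or $C^{1+\alpha}$) enters, and it is used \emph{inside} this proposition (in both (a) and (c)), not only ``later''. A $C^1$ dominated splitting does not give absolutely continuous holonomies. With these two corrections your outline becomes the paper's proof; the remaining points (positivity via the reversed comparison, the fixed point of $\lambda\mapsto\lambda(\cdot\circ f^{-1})/\lambda(\phi_1\circ f^{-1})$, and passing estimates from $\LL_1$ to its closure) are as you describe.
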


To prove this, we will show that $\LL$ is a convex and compact set and then apply the Schauder-Tychonoff fixed point Theorem to a normalized action of $f$ on $\LL$.  We note that {$\LL$} is not a Banach space, but this is not required to apply the Schauder-Tychonoff fixed point Theorem.

We will relate the iterations of different $cu$-test functions by using the invariance under $s$-holonomy and especially the following theorem (see, e.g., \cite[Theorem C]{AbdenurViana}).

\begin{theorem}\label{thm-Lcu-AC}
Let  $f$ be a $C^2$ diffeomorphism  on a compact Riemannian manifold  $M$.  Assume that there is a dominated splitting $E^s\oplus E^{cu}$ with $E^s$ uniformly contracting.  Fix $t>0$ and let $A_1,A_2$ be two submanifolds $t$-transverse to $E^s$ and $s$-equivalent through $h:A_1\to A_2$. Then $h$ is absolutely continuous. 

More precisely, writing $\lambda_1,\lambda_2$ for the Riemannian volume on $A_1,A_2$, the measures $h_*\lambda_1$ and $\lambda_2$ are equivalent and there are constants $C<\infty$ and $\alpha>0$ depending only on $f$,  $E^s$, and $t$ such that, letting $\eps$ be the size of the holonomy $h$:
 \begin{equation}\label{eq-Lcu-AC}
      \left|\frac{d h_*{\lambda_1}}{d\lambda_2} -1 \right|\leq {C^{1+\eps} \eps^\alpha}.
 \end{equation}
\end{theorem}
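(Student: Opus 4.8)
The final statement to prove is Theorem~\ref{thm-Lcu-AC}, about absolute continuity of stable holonomies between transversals.

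\medbreak

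The plan is to reduce the question to a standard fact about absolute continuity of stable holonomy for dominated splittings, and to extract the quantitative bound \eqref{eq-Lcu-AC} by following the proof with explicit constants. First I would recall the graph transform / cone-field machinery: since $E^s\oplus E^{cu}$ is dominated with $E^s$ uniformly contracting, there is an $N\geq 1$ and a cone field around $E^{cu}$ that is contracted by $Df^N$; any submanifold $t$-transverse to $E^s$ has, after finitely many iterates depending only on $t$, tangent spaces inside this cone field and hence is ``almost tangent'' to $E^{cu}$ with curvature controlled uniformly. So I would replace $A_1,A_2$ by their images under $f^{n_0}$ for a fixed $n_0=n_0(t)$, using the chain rule to absorb the resulting bounded distortion into the constant $C$ (this changes the holonomy size $\eps$ only by a bounded factor, and $E(\eps)$ and $\eps^\alpha$ are comparable under such rescaling up to adjusting $C$); this is why the constants end up depending on $t$.

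Next, for transversals that are genuinely tangent to a narrow cone around $E^{cu}$, the standard argument (as in \cite[Theorem C]{AbdenurViana} or the classical Pesin/Anosov computation) expresses the Jacobian of the holonomy $h:A_1\to A_2$ as an infinite product
$$
   \frac{dh_*\lambda_1}{d\lambda_2}(x) = \prod_{k\geq 0} \frac{\operatorname{Jac}\big(Df|T_{f^k x}A_1\big)}{\operatorname{Jac}\big(Df|T_{f^k h(x)}A_2\big)}.
$$
The convergence and the bound on $\big|\prod_k(\cdots) - 1\big|$ come from estimating each factor: the ratio of Jacobians at the $k$-th step differs from $1$ by at most $C'(d(f^k x, f^k h(x))^\alpha + \text{angle terms})$, where $\alpha$ is a Hölder exponent coming from the $C^2$ (hence $C^{1+1}$, so $C^{1+\alpha}$ for any $\alpha\le 1$) regularity of $Df$ and of the bundles, and the distances $d(f^k x, f^k h(x))$ decay geometrically because points on stable leaves contract: $d(f^kx, f^k h(x)) \leq C'' \theta^k \eps$ for some $\theta<1$. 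Summing the geometric series in $k$ gives a bound of the form $C(\eps^\alpha + \eps)$; the slightly awkward $E(\eps)=(e^\eps-1)^+$ rather than $\eps$ is there to handle the regime $\eps\geq\log 2$ (large holonomies), where one cannot linearize and the submultiplicativity $E(k t)\leq E(t)^{2k}$ noted just before the statement is exactly what lets one still control the tail $\sum_{k} E(\theta^k\eps)$-type sums by a geometric series. I would organize the $k$-sum by splitting into the range where $\theta^k\eps < \log 2$ (use $E(s)\asymp s$) and the finitely many initial terms where it is not (use the crude submultiplicative bound), absorbing everything into $C$.

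The main obstacle — or rather the only place requiring real care — is the quantitative dependence and the large-$\eps$ regime: getting the specific right-hand side $C(\eps^\alpha + E(\eps))$ uniform in the holonomy size, rather than just a qualitative ``absolutely continuous with continuous Jacobian.'' Making the constants depend only on $f$, $E^s$, and $t$ requires checking that the number of initial iterates $n_0(t)$, the cone aperture, the contraction rate $\theta$, and the Hölder constant $C'$ are all chosen uniformly, and that the reduction step (applying $f^{n_0}$) is compatible with the claimed inequality after adjusting $C$. Everything else — existence of the Jacobian product, its convergence, absolute continuity — is the classical argument, so I would cite \cite{AbdenurViana} for the structure and only present the bookkeeping needed to produce the explicit estimate \eqref{eq-Lcu-AC}.
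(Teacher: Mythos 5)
Your outline is correct and is essentially the standard absolute-continuity argument (infinite product of Jacobian ratios along forward iterates, geometric decay from the stable contraction and the domination, H\"older regularity of the bundles, plus the crude $E(\eps)$ term for large holonomy size). The paper itself gives no proof of Theorem~\ref{thm-Lcu-AC}: it quotes the statement from \cite[Theorem C]{AbdenurViana}, which is exactly the reference and the argument you describe, so there is nothing to compare beyond noting that your sketch reconstructs the cited proof.
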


Note that \cite[Theorem C]{AbdenurViana} gives the usual (stronger) bound by $C_0\eps^\alpha$ but only for $0\leq \eps\leq\eps_0$. The above bound for all $\eps\geq0$ is obtained by a routine computation by decomposing a holonomy of some large size $\eps\geq\eps_0$ into a product of $\lceil \eps/\eps_0\rceil$ holonomies of size at most $\eps_0$.

\medbreak

{In the rest of Subsection~\ref{s-cu-conditionals}, we assume that  $f\in\Diff^2(M)$ is partially hyperbolic with a splitting $TM=E^s\oplus E^{cu}$ and an invariant foliation $\Fcu$ tangent to $E^{cu}$ and a minimal strong stable foliation $\Fs$.}

We need two additional lemmas. The first one will give a uniform bound on the volume growth in the center unstable leaves.  

\begin{lemma}\label{lem-Lcu-local}
For any open, non-empty $cu$-subset $A$, there are constants $C_A<\infty$ and $r_A>0$ such that, 
 \begin{equation}\label{eq-Lcu-local}
    \forall x\in M\;\forall n\geq0\; \Lcu(f^n B^{cu}(x,r_A)) \leq C_A \Lcu(f^n A).
 \end{equation}
\end{lemma}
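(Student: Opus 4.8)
The plan is to leverage minimality of the strong stable foliation $\Fs$ to compare, through a bounded stable holonomy, the $cu$-volume growth of an arbitrary small ball $B^{cu}(x,r_A)$ with that of a fixed $cu$-subset of $A$, and then to invoke Theorem~\ref{thm-Lcu-AC}. The two observations that make this work are: (a) $f$ contracts stable lengths uniformly, so a stable holonomy conjugated by $f^n$ is again a stable holonomy of \emph{uniformly bounded} size; and (b) the $cu$-leaves are uniformly transverse to $E^s$ (the splitting is continuous and $M$ compact), so Theorem~\ref{thm-Lcu-AC} applies with constants independent of $n$.

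First I would fix a point $a_0$ in the open set $A$, a small chart $\mathcal B$ that is simultaneously a foliation box for $\Fs$ and for $\Fcu$ around $a_0$ such that the $cu$-plaque $P:=\Fcu_{\mathcal B}(a_0)$ satisfies $P\subset A$, and a smaller chart $\mathcal B'$ with $\overline{\mathcal B'}\subset\mathcal B$ and $a_0\in\mathcal B'$. By minimality of $\Fs$ and compactness of $M$ there is $L>0$ such that $B^s(x,L)\cap\mathcal B'\ne\emptyset$ for every $x\in M$; pick $x'$ in this intersection. Composing the $\Fs$-holonomy along $\Fs(x)=\Fs(x')$ (from a $cu$-neighborhood of $x$ to a $cu$-neighborhood of $x'$) with the in-chart $\Fs$-holonomy sliding the $cu$-plaque of $x'$ onto $P$, one obtains an $\Fs$-holonomy $h_x$. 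The key point is that, using the uniform local product structure of $\Fs$ and $\Fcu$ on the compact manifold $M$ — which bounds below the domain of any stable holonomy along a path of length at most $L+\diam(\mathcal B)$ — there is a \emph{single} radius $r_A>0$, depending only on $A$ and $f$, such that $h_x$ is defined on $B^{cu}(x,r_A)$, has size at most $L_0:=L+\diam(\mathcal B)$, and satisfies $h_x\big(B^{cu}(x,r_A)\big)\subset P\subset A$.

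Then, fixing $x\in M$ and $n\ge0$, I would set $g_n:=f^n\circ h_x\circ f^{-n}$. This is again an $\Fs$-holonomy, since $f^n\Fs(y)=\Fs(f^ny)$, carrying $f^nB^{cu}(x,r_A)$ onto an open subset of $f^nP\subset f^nA$. Because $E^s$ is uniformly contracted there are $C\ge1$ and $\lambda<1$ with $d_s(f^ny,f^nz)\le C\lambda^n\,d_s(y,z)$ for $y,z$ on a common stable leaf, so the size of $g_n$ is at most $C\lambda^n L_0\le CL_0$, uniformly in $x$ and $n$. As $f^nB^{cu}(x,r_A)$ and $f^nP$ are tangent to $E^{cu}$ they are $t_0$-transverse to $E^s$ for a fixed $t_0>0$; Theorem~\ref{thm-Lcu-AC} then yields a constant $C_A<\infty$ depending only on $f$, $E^s$, $t_0$ and $L_0$ — hence only on $f$ and $A$ — such that $(g_n)_*\big(\Lcu|_{f^nB^{cu}(x,r_A)}\big)\le C_A\,\Lcu|_{f^nP}$ (as measures on $f^nP$). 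Evaluating this inequality on $f^nP$ gives $\Lcu\big(f^nB^{cu}(x,r_A)\big)\le C_A\,\Lcu(f^nP)\le C_A\,\Lcu(f^nA)$, which is exactly \eqref{eq-Lcu-local}.

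I expect the only real difficulty to be the construction in the second paragraph of a single radius $r_A$ valid for all $x\in M$: this combines the ``uniform density radius'' of the minimal foliation $\Fs$ (every stable ball of a fixed radius $L$ meets $\mathcal B'$) with uniform estimates on stable holonomies along paths of bounded length, both standard consequences of compactness but requiring some care. Once $r_A$ and $L_0$ are in hand, the remaining steps are routine bookkeeping with the uniform stable contraction and a single application of Theorem~\ref{thm-Lcu-AC}.
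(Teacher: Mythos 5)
Your proof is correct and follows essentially the same route as the paper's: use minimality of $\Fs$ plus compactness of $M$ to find a uniform radius $r_A$ so that every $B^{cu}(x,r_A)$ is $s$-equivalent, via a holonomy of uniformly bounded size, to a subset of $A$; then conjugate that holonomy by $f^n$, note its size stays bounded (indeed decays) by uniform stable contraction, and apply Theorem~\ref{thm-Lcu-AC}. The extra care you take with the uniform choice of $r_A$ is exactly the compactness step the paper compresses into one sentence.
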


\begin{proof}
Fix $r_1>0$ so small that $A_1:=A\setminus B^{cu}(\partial A,r_1)$ is not empty. The minimality implies that any $x$ is $s$-equivalent to some point in $A_1$. 
{For every $x\in M$,} the continuity of the foliation $\Fs$  and its transversality to $A$ yield  $r_x>0$ and $R_x<\infty$ such that, for any $y\in B(x,r_x)$, $B^{cu}(y,r_x)$ is $(R_x,s)$-equivalent to a subset of {$A$}. 
The compactness of $M$ {allows to choose $r_A>0$ and $R_A<\infty$ independent of $x\in M$.}

Since $\Fs$ is contracted, there are numbers $C<\infty$ and $\kappa<1$ such that, {for all $n\geq0$ and all $x\in M$,} the set $f^n(B^{cu}(x,r_A))$ is $(C\kappa^nR_A,s)$-equivalent to a subset of $f^n(A)$. {We conclude by} Theorem \ref{thm-Lcu-AC}.
\end{proof}

\begin{corollary}\label{cor-unif-sup}
For any $\phi\in\cT^{cu}$ with $\phi>0$, there are numbers $C(\phi)<\infty$ and $R(\phi)>0$ such that for any $\psi\in\cT^{cu}$  and any $n\geq 0$:
 $$ 
     \int \psi\circ f^{-n}\, d\Lcu \leq C(\phi)r^{cu}(\supp\psi,R(\phi))\|\psi \|_\infty \int \phi\circ f^{-n}\, d\Lcu.
 $$
\end{corollary}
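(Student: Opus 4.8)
The plan is to deduce this directly from Lemma~\ref{lem-Lcu-local} by sandwiching both $\phi$ and $\psi$ between multiples of indicator functions of small $cu$-balls. All the genuine content — the uniform volume comparison along $cu$-leaves — already sits in that lemma, so what follows is essentially bookkeeping.

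First I would fix a small ball inside the support of $\phi$. Since $\phi>0$, the set $\{\phi>0\}$ has non-empty interior in the intrinsic topology of the $cu$-leaf carrying $\supp\phi$; pick $x_0$ there. By continuity of $\phi$ on $\supp\phi$ there is $\rho>0$ so that the open $cu$-subset $A:=B^{cu}(x_0,\rho)$ is contained in $\{\phi>0\}$ and $\phi\geq\delta$ on $A$, with $\delta:=\tfrac12\phi(x_0)>0$. Then $\delta\,\mathbf 1_A\leq\phi$, and since $\mathbf 1_A\circ f^{-n}=\mathbf 1_{f^nA}$ we get, for every $n\geq0$,
$$\int\phi\circ f^{-n}\,d\Lcu\;\geq\;\delta\int\mathbf 1_A\circ f^{-n}\,d\Lcu\;=\;\delta\,\Lcu\!\left(f^nA\right).$$

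Next I would feed this $A$ into Lemma~\ref{lem-Lcu-local}, obtaining $C_A<\infty$ and $r_A>0$ with $\Lcu(f^nB^{cu}(x,r_A))\leq C_A\,\Lcu(f^nA)$ for all $x\in M$ and $n\geq0$, and set $R(\phi):=r_A$ and $C(\phi):=C_A/\delta$. Given any $\psi\in\cT^{cu}$, put $k:=r^{cu}(\supp\psi,R(\phi))$ and choose $x_1,\dots,x_k$ with $\supp\psi\subset\bigcup_{i=1}^kB^{cu}(x_i,R(\phi))$; then $\psi\leq\|\psi\|_\infty\sum_{i=1}^k\mathbf 1_{B^{cu}(x_i,R(\phi))}$ pointwise, so that for every $n\geq0$
$$\int\psi\circ f^{-n}\,d\Lcu\;\leq\;\|\psi\|_\infty\sum_{i=1}^k\Lcu\!\left(f^nB^{cu}(x_i,R(\phi))\right)\;\leq\;k\,C_A\,\|\psi\|_\infty\,\Lcu\!\left(f^nA\right).$$
Combining this with the lower bound for $\int\phi\circ f^{-n}\,d\Lcu$ from the first display (which is legitimate since $\Lcu(f^nA)>0$) yields exactly $\int\psi\circ f^{-n}\,d\Lcu\leq C(\phi)\,r^{cu}(\supp\psi,R(\phi))\,\|\psi\|_\infty\int\phi\circ f^{-n}\,d\Lcu$.

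I do not expect any real obstacle here. The only point worth a line is that $\mathbf 1_A$ and the $\mathbf 1_{B^{cu}(x_i,R(\phi))}$ are not $cu$-test functions, so Lemma~\ref{lem-Lcu-local} is not being applied to a test function; but all the functions in play are nonnegative and Borel, so the pointwise dominations above together with the identity $\mathbf 1_E\circ f^{-n}=\mathbf 1_{f^nE}$ are all that is needed, and the substance remains entirely in Lemma~\ref{lem-Lcu-local}.
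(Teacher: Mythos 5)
Your argument is correct and is essentially the paper's proof: both reduce the claim to Lemma~\ref{lem-Lcu-local} by choosing an open non-empty $cu$-subset $A$ on which $\phi$ is bounded below (you take a small ball $B^{cu}(x_0,\rho)\subset\{\phi>\delta\}$, the paper takes the superlevel set $\{\phi>t\}$), so that $\Lcu(f^nA)\leq\delta^{-1}\int\phi\circ f^{-n}\,d\Lcu$, and then cover $\supp\psi$ by $r^{cu}(\supp\psi,r_A)$ balls of radius $r_A$ and sum the lemma's estimate over the cover. The minor difference in the choice of $A$ is immaterial, and your remark that the indicator functions need not be test functions is correctly handled by working directly with the volumes $\Lcu(f^nE)$.
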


\begin{proof}
The left hand side is bounded by $\|\psi \|_\infty\cdot\lambda^{cu}(f^n(\supp\psi))$.  Fix some $0<t<\sup \phi$. 
The previous lemma with $A:=\{\phi>t\}$ yields $r_A>0$ and $C_A<\infty$.
Since $\supp(\phi)$ is compact in its $cu$-leaf $\Fc(A)$, there are $x_1,\dots,x_N\in\Fc(A)$ with $N=r^{cu}(\supp\psi,r_A)$ such that  $\supp(\psi)\subset \bigcup_{i=1}^N B^{cu}(x_i,r_A)$. 
Now 
 $$
   \lambda^{cu}(f^n(B^{cu}(x_i,r_A)))\leq  C_A \lambda^{cu}(f^nA) \leq C_A \lambda^{cu}(\phi\circ f^{-n})/t.
  $$
Summing over the cover of $\supp\psi$, the claim follows with $C(\phi):=C_A/t$ and $R(\phi):=r_A$.
\end{proof}

The next lemma establishes approximate holonomy invariance.

\begin{lemma}\label{lem-near-hol-inv}
There are numbers $C<\infty$ and {$0<\kappa<1$} with the following properties.
Let $\psi,\phi\in\cT^{cu}$ be $(s,\Delta)$-equivalent for some $\Delta<\infty$.

First,  if $\psi>0$, then, for all $n\geq0$:
 \begin{equation}\label{eq-near-hol1}
   \forall \lambda\in\LL\quad \left|\lambda(\psi\circ f^{-n}) - \lambda(\phi\circ f^{-n})\right|\leq {C^{1+\Delta\kappa^n}\kappa^{\alpha n} } \Delta^\alpha \lambda(\psi\circ f^{-n}).
 \end{equation}
Second,  for any $\phi_1\in\cT^{cu}$ with $\phi_1>0$, there are numbers $C(\phi_1)$ and $R(\phi_1)>0$ such that,  for any $n\geq 0$ we have
  \begin{equation}\label{eq-near-hol2}
    \left|\ell_n(\psi) - \ell_n(\phi)\right|
        \leq {C(\phi_1)^{1+\Delta\kappa^n}} r^{cu}(\supp(\psi),R(\phi_1))\|\psi \|_\infty {\kappa^{\alpha n}}  \Delta^\alpha \ell_n(\phi_1).
   \end{equation}
\end{lemma}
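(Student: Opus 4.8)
The two inequalities quantify how much the functionals $\ell_n$ (and more generally any $\lambda\in\LL$ acting on $\psi\circ f^{-n}$) fail to be $s$-holonomy invariant, with the failure decaying geometrically in $n$. The idea is to use the approximate invariance of the Riemannian volume under $s$-holonomy coming from Theorem~\ref{thm-Lcu-AC}, combined with the exponential contraction of $\Fs$ along the dynamics, and then to control the constants uniformly using Corollary~\ref{cor-unif-sup} (resp.\ Lemma~\ref{lem-Lcu-local}). The key observation is: if $\psi,\phi$ are $(s,\Delta)$-equivalent through an $s$-holonomy $h$ with $\phi=\psi\circ h$, then $\psi\circ f^{-n}$ and $\phi\circ f^{-n}$ are $s$-equivalent through the holonomy $f^n\circ h\circ f^{-n}$, whose size is at most $C\kappa^n\Delta$ for the contraction constants $\kappa<1$, $C<\infty$ of $\Fs$ (as in the proof of Lemma~\ref{lem-Lcu-local}). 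Set $\rho:=\kappa$.

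First I would prove \eqref{eq-near-hol1}. Fix $\lambda\in\LL$; write it as a pointwise limit of elements of $\LL_1$, so it is a limit of nonnegative combinations of the $\ell_t$'s, and it suffices to treat $\lambda=\ell_m$ and pass to the limit at the end. Then $\ell_m(\psi\circ f^{-n})-\ell_m(\phi\circ f^{-n})=\int(\psi-\phi)\circ f^{-n-m}\,d\Lcu$. Cover $\supp(\psi)$ by finitely many small $cu$-charts on which the $s$-holonomy $h$ identifies a piece of one $cu$-leaf with a piece of another; apply Theorem~\ref{thm-Lcu-AC} to the holonomy $f^{n}\circ h\circ f^{-n}$ between the corresponding $cu$-submanifolds, which have a definite transversality angle to $E^s$ uniformly (by compactness and the continuity of the foliations). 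The change-of-variables bound \eqref{eq-Lcu-AC}, with $\eps\le C\kappa^n\Delta$, gives
$$
  \left|\int(\psi-\phi)\circ f^{-n-m}\,d\Lcu\right|
   \le C\bigl((C\kappa^n\Delta)^\alpha+E(C\kappa^n\Delta)\bigr)\int \psi\circ f^{-n-m}\,d\Lcu .
$$
Absorbing $\kappa^{n\alpha}$ and using $E(C\kappa^n\Delta)\le \kappa^n E(C\Delta)$ (valid since $E$ is convex with $E(0)=0$, so $E(st)\le sE(t)$ for $s\in[0,1]$) yields the factor $\rho^n(\Delta^\alpha+E(C\Delta))$, after enlarging $C$. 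Finally $\int\psi\circ f^{-n-m}\,d\Lcu=\ell_m(\psi\circ f^{-n})$, so we get the claimed bound for $\lambda=\ell_m$, and the general case follows by taking nonnegative combinations and then the pointwise limit (the inequality is preserved since $\psi>0$ keeps the right-hand side strictly positive).

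For \eqref{eq-near-hol2}, note $\ell_n(\psi)-\ell_n(\phi)=\int(\psi-\phi)\circ f^{-n}\,d\Lcu$, which by the same volume argument is bounded by $C\rho^n(\Delta^\alpha+E(C\Delta))\,\lambda^{cu}(f^n(\supp\psi))$; then I would invoke Corollary~\ref{cor-unif-sup} with the reference function $\phi_1$ to replace $\lambda^{cu}(f^n(\supp\psi))$ by $C(\phi_1)\,r^{cu}(\supp\psi,R(\phi_1))\,\|\psi\|_\infty\,\ell_n(\phi_1)$, giving exactly \eqref{eq-near-hol2} after renaming constants. The main obstacle is purely bookkeeping: making sure the transversality angle between the $cu$-submanifolds carrying $\supp(\psi\circ f^{-n})$ and $\supp(\phi\circ f^{-n})$ and the bundle $E^s$ stays bounded below uniformly in $n$ (it does, because $Df^n$ preserves the domination $E^s\oplus E^{cu}$ and $cu$-leaves stay tangent to $E^{cu}$, so the angle is bounded below by a constant depending only on the dominated splitting), and chaining finitely many local applications of Theorem~\ref{thm-Lcu-AC} into one global estimate with the constant $C$ from $f$, $E^s$ alone — which is why the global constant in \eqref{eq-near-hol1}–\eqref{eq-near-hol2} is independent of $\psi,\phi,\Delta,n$.
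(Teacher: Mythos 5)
Your proposal is correct and follows essentially the same route as the paper: push the $s$-holonomy $h$ with $\psi=\phi\circ h$ forward to $h_n=f^n\circ h\circ f^{-n}$, whose size is $O(\kappa^n\Delta)$ by uniform contraction of $\Fs$, apply the absolute continuity estimate of Theorem~\ref{thm-Lcu-AC} to compare $\ell_n(\psi)$ and $\ell_n(\phi)$, extend to general $\lambda\in\LL$ through $\LL_1$ and pointwise limits, and invoke Corollary~\ref{cor-unif-sup} for \eqref{eq-near-hol2}. One small slip: you cannot take $\rho:=\kappa$ and then ``absorb $\kappa^{n\alpha}$ by enlarging $C$,'' since $\kappa^{n\alpha}/\kappa^{n}\to\infty$ when $\alpha<1$; you should set $\rho:=\kappa^{\alpha}$ (and note $E(C\kappa^n\Delta)\le\kappa^nE(C\Delta)\le\rho^nE(C\Delta)$), which is exactly what the paper's displayed bound $C(\kappa^{\alpha n}\Delta^\alpha+E(C\kappa^n\Delta))$ amounts to.
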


\begin{proof}
Since $\phi$ and $\psi$ are $\Delta$-equivalent, there is $h:\supp(\psi)\to\supp(\phi)$ with size at most $\Delta$ such that $\psi=\phi\circ h$. Since $\Fs$ is uniformly contracted, $\phi\circ f^{-n}$ and $\psi\circ f^{-n}$ are $C\kappa^n\Delta$-equivalent through $h_n:=f^n\circ h\circ f^{-n}$  for some $C<\infty$ and $\kappa<1$. Theorem \ref{thm-Lcu-AC}  yields (replacing $C$ by a larger constant each time needs arise):
 $$\begin{aligned}
    |\ell_n(\phi)&-\ell_n(\psi)|=\biggl|\int \phi\circ f^{-n} d\Lcu_{f^ny} - \int \psi\circ f^{-n} d\Lcu_{f^nx}\biggr| \\
    &= \left|\int \left(\phi\circ f^{-n}\circ h_n \frac{d (h_n)_*\Lcu_{f^ny}}{d \Lcu_{f^nx}} -\psi\circ f^{-n}\right)\, d\Lcu_{f^nx}\right|\\
    &{= \left|\int \psi\circ f^{-n}  \left( \frac{d (h_n)_*\Lcu_{f^ny}}{d \Lcu_{f^nx}} - 1\right)\, d\Lcu_{f^nx}\right|}\\
    &\leq { C^{1+C\Delta\kappa^n} \Delta^\alpha \kappa^{\alpha n} } \int |\psi\circ f^{-n}| d\Lcu_{f^nx}\\
    &\leq C^{1+\Delta\kappa^n} \Delta^\alpha \kappa^{\alpha n} \ell_n(|\psi|).
   \end{aligned} $$
Let $\lambda\in\LL_1$ so that $\lambda=\sum_{i=1}^I c_i\ell_{t_i}$. Using the identity $\ell_t(\phi\circ f^{-n})=\ell_n(\phi\circ f^{-t})$, we get:
  $
    \lambda(\phi\circ f^{-n}) 
       = \sum_{i=1}^I c_i \ell_{n}(\phi\circ f^{-t_i}).
   $
Hence, if $\psi>0$, 
  $$
     |\lambda(\phi\circ f^{-n})-\lambda(\psi\circ f^{-n})| \leq { C^{1+\Delta\kappa^n} \Delta^\alpha \kappa^{\alpha n} } \lambda(\psi\circ f^{-n}),
   $$
proving eq.~\eqref{eq-near-hol1} for all $\lambda\in\LL$ by continuity. 

Let $\phi_1\in\cT^{cu}$ with $\phi_1>0$. Applying Corollary~\ref{cor-unif-sup}, we obtain $C(\phi_1)$ and $R(\phi_1)>0$ such that eq.~\eqref{eq-near-hol2} holds.
\end{proof}

\begin{proof}[Proof of Proposition \ref{proposition-functional}]
{We check the claims (a), (b), and (c) in turn.}

\newcommand\step[2]{\medbreak\noindent{\bf Step #1:} \emph{#2}\smallbreak}

\step{1}{Claim (a): $\exists R>0$ $\forall \lambda\in\LL$ $\lambda(\psi)\leq C r^{cu}(\supp\psi,R)\|\psi\|_\infty$.}

Corollary~\ref{cor-unif-sup} {applied to the test function $\phi=\phi_1$ (used in the normalization)} yields numbers $C<\infty$ and $R>0$ such that  for any $\psi\in\cT^{cu}$:
 $$
    \forall n\geq0\; \ell_n(\psi) \leq C r^{cu}(\supp\psi,R)\|\psi\|_\infty \ell_n(\phi_1).
  $$
Therefore, for  any $\lambda\in\LL_1$ :
\begin{equation}\label{eq-lambda-upper}\begin{aligned}
    \lambda(\psi)  =\sum_{i=1}^I c_i\ell_{t_i}(\psi) &\leq \sum_{i=1}^I c_i C r^{cu}(\supp\psi,\rho)\|\psi\|_\infty  \ell_{t_i}(\phi_1)\\
    &=C r^{cu}(\supp\psi,R)\|\psi\|_\infty \lambda(\phi_1).
 \end{aligned}\end{equation}
{Since $\lambda(\phi_1)=1$, this proves the claim,  first for $\lambda\in\LL_1$, then for $\LL=\overline{\LL_1}$ by continuity. Step 1 is done.}

\medbreak

Note that eq.~\eqref{eq-lambda-upper} implies that $\LL_1$ is  a subset of the compact set
 $$
 \prod_{\psi\in\cT^{cu}} [0,C\cdot r^{cu}(\supp\psi,R)\|\psi\|].
 $$ 
Hence its closure $\LL$  is compact.

\step{2}{Claim~(b): $\forall \psi\in\cT^{cu}$ if $\psi>0$ there is $C(\psi)>0$ s.t. $\forall\lambda\in\LL$, $\lambda(\psi)\geq C(\psi)$.}

We now assume that ${\psi}>0$ so we can apply Corollary~\ref{cor-unif-sup} exchanging $\psi$ and $\phi_1$. We get new numbers $C'$ and $R'$, {depending on $\psi$}.
Setting $C_1:=C'r^{cu}(\supp\phi_1,R')$, we have, for all $n\geq0$, $\ell_n(\phi_1)\leq C_1\ell_n(\psi)$. That is, $\ell_n(\psi)\geq(1/C_1)\ell_n(\phi_1)$ so that, for any $\lambda\in\LL$:
  \begin{equation}\label{eq-lambda-lower}
   \lambda(\psi)\geq \frac1{C_1} \lambda(\phi_1) = \frac1{C_1},
 \end{equation}
concluding Step 2 with the lower bound $1/C_1(\psi)$.

\step{3}{Existence of $\Lambda\in\LL$ with $f(\Lambda)=D\cdot\Lambda$}

We now  build the functional $\Lambda$ as a fixed point of the map
 $$
   \bar f:\LL\to\LL,\quad \lambda\longmapsto\frac{\lambda(\cdot\circ f^{-1})}{\lambda(\phi_1\circ f^{-1})}.
  $$

We claim that $\bar f$ is well-defined and continuous from $\LL$ to $\RR^{\cT^{cu}}$.
Indeed, the map  $\lambda\mapsto \lambda(\phi_1\circ f^{-1})$ from $\LL$ to $\RR$ is well-defined since $\phi_1\circ f^{-1}\in\cT^{cu}$, is obviously continuous, and  is positive by Step~2. 
Moreover, $\lambda(\,\cdot\circ f^{-1}):\cT^{cu}\to\RR$ is well-defined and $\lambda\mapsto\lambda(\,\cdot\circ f^{-1})$ is continuous from $\LL$ to $\RR^{\cT^{cu}}$. The claim is proved. 

Finally, it is obvious that $\bar f(\LL_1)=\LL_1$, hence $\bar f:\LL\to\LL$ is a well-defined continuous map. 
Since $\LL$ is a convex, compact subset  of the locally convex linear space {$\RR^{\cT^{cu}}$}, the Schauder-Tychonoff Theorem applies and yields $\Lambda\in\LL$ with $\bar f(\Lambda)=\Lambda$. 

In other words, $f(\Lambda) = D \cdot \Lambda$ with $D= \Lambda(\phi_1\circ f^{-1})>0$.

\step{4}{Claim (c): $s$-holonomy invariance of $\Lambda$}

Let $\phi,\psi\in\cT^{cu}$. Observe that positive (respectively negative) parts of $\phi$ and $\psi$ must then be $s$-equivalent. Thus we can assume $\phi,\psi>0$. Assume that $\psi$ and $\phi$ are $s$-equivalent. By compactness of their support, they are $(s,\Delta)$-equivalent for some $\Delta>0$ and therefore $\phi\circ f^{-n}$ and $\psi\circ f^{-n}$ are $(s,C\kappa^n\Delta)$-equivalent with $0<\kappa<1$. Using the dilation by $D\ne 0$ and eq.~\eqref{eq-near-hol1} from Lemma~\ref{lem-near-hol-inv} (approximate holonomy invariance), we get that, for any $\eps>0$, for large enough $n\geq0$:
 $$\begin{aligned}
    |\Lambda(\phi)-\Lambda(\psi)| &= D^{-n}|\Lambda(\phi\circ f^{-n})-\Lambda(\psi\circ f^{-n})|\\
     &\leq D^{-n}\eps\cdot\Lambda(\psi\circ f^{-n}) = \eps\cdot\Lambda(\psi)
    \end{aligned}$$
 As $\eps>0$ is arbitrarily small this implies $\Lambda(\phi)=\Lambda(\psi)$ and concludes the step.
 
 \medbreak
 Proposition  \ref{proposition-functional} is established.
\end{proof}

We deduce a Margulis system of $cu$-measures from the functional $\Lambda$.

\begin{proof}[Proof of Proposition \ref{cumargulis}]
Proposition \ref{proposition-functional} yields  a functional $\Lambda$ on $\cT^{cu}$, which contains $C_c(\Fcu(x))$ for all $x\in M$. {We note that, for each $x\in M$, $\Fcu(x)$ is a locally compact metric space and} $\Lambda|C_c(\Fcu(x))$ is linear and positive (because this holds for all $\lambda\in\LL_1$ and extends by continuity to $\LL$). 
Hence, Riesz's Representation Theorem \cite[2.14]{Rudin} gives a {Radon} measure $m_x$ on $\Fcu(x)$, for each $x\in M$, characterized by:
 $$
     \forall \phi\in C_c(\Fcu(x)),\; m_x(\phi) = \Lambda(\phi).
 $$
The full support, and $s$-invariance of each $m_x$ follows from properties  (b) and (c) from Proposition \ref{proposition-functional}.

We prove that each $\nucu_x$ is atomless using the holonomy invariance. Indeed, assume by contradiction that there is $y\in\Fcu(x)$ with $\nucu_x(\{y\})>0$. Consider the stable leaf $\Fs(y)$. By assumption it is dense in $M$, hence by transversality in $\Fcu(x)$. By $s$-invariance, $\nucu_x$ must have a dense set of atoms $z\in\Fcu(x)$, all of which have measure $\nucu_x(\{y\})>0$. This contradicts the finiteness of $\nucu_x$ on compact sets.

We finally deduce the continuity from the holonomy invariance. As $\mathcal{F}^{cu}$ and $\mathcal{F}^s$ are transverse, for any $x_0\in M$, there is a neighborhood $B$ of $x_0$ and a continuous map $h_0:B\times\Fcu_B(x_0)\to B$ with $\{h_0(x,y)\}=\Fcu_\loc(x)\cap\Fs_\loc(y)$ (in particular, $h_0(x_0,y)=y$). Let $\phi\in C(M)$. By holonomy invariance, $\nucu_x(\phi|\Fcu_B(x))=\nucu_{x_0}(\phi\circ h_0(x,\cdot))$. Hence,
 $$
    |\nucu_{x}(\phi|\Fcu_B(x)) -\nucu_{x_0}(\phi|\Fcu_B(x_0))| \leq \int_{\Fcu_B(x_0)} |\phi\circ h_0(x,\cdot)-\phi |\, d\nucu_{x_0}
 $$
which converges to $0$ as $x$ goes to $x_0$. This is the continuity property.

{The assertion about  the dilation factor is $D_u>1$ follows from the next  Lemma.}
\end{proof}

\begin{lemma}\label{lem-Dbigger}
Let $f\in\Diff^2(M)$ have a dominated splitting $TM=E^s\oplus E^{cu}$ with tangent foliations $\Fs$ and $\Fcu$. Assume that $\Fs$ is minimal and that there is a Margulis system  $\{m^{cu}_x\}_{x\in M}$ carried by $\Fcu$.

If there is a invariant circle $\gamma$ contained in some $cu$-leaf, then 
the dilation of the Margulis system on $\Fcu$  satisfies: $D_u>1$.
\end{lemma}

\begin{proof}
Let   $\gamma$ be the invariant circle and let  $\Fcu(x)$ be its $cu$-leaf endowed with its intrinsic topology.
Since $\gamma$ is a topological attractor for the restriction of $f^{-1}$ to the invariant set $\Fcu(x)$, there is a relatively compact neighborhood $U$ of $\gamma)$ in $\Fcu(x)$ such that $U\setminus f^{-1}(U)$ has non empty interior. 
As $\nucu_{x}$ has full support in $\Fcu(x)$, it follows that:
$$
  D_u^{-1}\nucu_x(U)=\nucu_x(f^{-1}U)<\nucu_x(U),
 $$
proving $D_u>1$.
\end{proof}


\subsection{Building the $u$-conditionals}\label{s-u-conditionals}


We complete the proof of Theorem~\ref{mainthm-conditionals}.

We start with the previously built Margulis $cu$-system $\{m^{cu}_x\}_{x\in M}$ and define the family of measures $\{\nuu_x\}_{x\in M^u}$ by extending subsets of $u$-leaves to subsets of $cu$-leaves along the center foliation. For flows, Margulis used the formula 
 $$
   \nuu_x(A)=\nucu_x\left(\bigcup_{0\leq t<t_0}\phi^t(A)\right).
 $$ 
Here $t_0>0$ is chosen small so that disjoint subsets of the same $u$-leaf correspond to disjoint subsets of the $cu$-leaf. This ensures that $\nuu_x$ {inherits the $\sigma$-}additivity of $\nucu_x$. 

In our setting there is no flow commuting with the dynamics and we have to proceed differently. To keep the equivariance, we will replace $(\phi^t(x))_{0\leq t<t_0}$ by the $c$-segment $I_c(x)$ ``between $x$ and $f(x)$''. Such center curves cannot be assumed to be arbitrarily short, so even restricting $x$ to a small subset of a single $u$-leaf, the curves $I_c(x)$ might intersect and destroy the additivity property. It turns out that this problem only occurs on $\Cyl^u$, i.e., the union of the $cu$-leaves containing a compact center leaf. This is why we will build $u$-measures $m^u_x$ only for $x\in M\setminus\Cyl^u$.\footnote{Alternatively, one could consider a covering of $M$ where all center leaves are noncompact. Note also that this problem could be altogether avoided by restricting to perturbation of time $t$ maps for small enough $t$, instead of taking $t=1$.}

\subsubsection*{Preparations}
We recall or establish some useful properties of the dynamical foliations and of the $cu$-system built in Section~\ref{s-Margulis}. In particular, it allows us to disregard the closed center leaves.

{
\begin{remark}\label{rem-countable-closed}
There can be at most countably many compact center leaves, since each one is normally hyperbolic (see \cite[Theorem 4.1(b)]{HPS}).
\end{remark}
}

\begin{lemma} \label{circles}
Let $f\in\Diff(M)$ be a flow-type diffeomorphism. If a center leaf meets some unstable leaf in more than one point, then it is contained in the $cu$-leaf of a compact center leaf.
\end{lemma}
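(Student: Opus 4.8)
Let $f$ have flow type and suppose a center leaf $\Fc(x)$ meets an unstable leaf $\Fu(x)$ in two distinct points $x$ and $y$. The strategy is to exploit the flow-like dynamics property \mftPush: there is a continuous $\tau:M\to\RR_{>0}$ with $f(z)=F(z,\tau(z))$, where $F$ is the unit-speed flow along the oriented foliation $\Fc$. Since $y\in\Fc(x)$, there is a well-defined ``center distance'' $T>0$ (measured with orientation) so that $y=F(x,T)$; replacing $y$ by $x$ if necessary we may take $T>0$. The point is to show that the center leaf through $x$ is periodic for $F$, i.e. compact, or that it lies in the $cu$-leaf of such a leaf.

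\emph{Step 1: Iterate and use that $f$ preserves both foliations.} Because $y\in\Fu(x)$ and $\Fu$ is uniformly contracted by $f^{-1}$ (equivalently expanded by $f$), the backward iterates $f^{-n}(x)$ and $f^{-n}(y)$ lie in the same unstable leaf and satisfy $d_u(f^{-n}x,f^{-n}y)\to 0$. On the other hand, $f^{-n}(y)=F(f^{-n}(x), T_n)$ for some $T_n$, since $f$ commutes with the foliation $\Fc$ and acts along each leaf; here $T_n$ is the center-displacement between $f^{-n}x$ and $f^{-n}y$ inside the leaf $\Fc(f^{-n}x)$. The first thing to establish is that $T_n$ does not go to $0$: the segment $I_c(f^{-n}x)$ from $f^{-n}x$ to $f(f^{-n}x)=f^{-n+1}x$ has length $\tau(f^{-n}x)$, which is bounded below by $\min_M\tau>0$ by compactness and continuity, and I would like $|T_n|$ to be comparable to a fixed such quantity. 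Concretely, I expect to show $T_n$ stays in a fixed compact subset of $\RR\setminus\{0\}$, using that $f^{-n}$ maps the center segment between $x$ and $y$ onto the center segment between $f^{-n}x$ and $f^{-n}y$, and that these segments have length bounded below (they contain, or are contained in, controlled numbers of fundamental segments $I_c$).

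\emph{Step 2: Produce a center-periodic point in the limit.} Passing to a subsequence, $f^{-n_k}x\to p$ for some $p\in M$, and then $f^{-n_k}y=F(f^{-n_k}x,T_{n_k})\to F(p,T_\infty)$ with $T_\infty\neq 0$ by Step 1. But $d_u(f^{-n_k}x,f^{-n_k}y)\to 0$ forces $f^{-n_k}y\to p$ as well (both converge, and their intrinsic unstable distance shrinks, so — using continuity of $\Fu$ in the ambient topology — the ambient limits agree). Hence $F(p,T_\infty)=p$ with $T_\infty\neq 0$: the center leaf $\Fc(p)$ is a periodic orbit of the flow $F$, i.e. a compact center leaf. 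Now $p\in\overline{\{f^{-n}x\}}$, so some backward iterate $f^{-n}x$ (in fact all sufficiently large ones, by continuity of $\Fcu$ and the fact that being in the $cu$-leaf of a compact center leaf is determined by a neighborhood) lies in $\Fcu(p)=\Cyl^{cu}$ of a compact center leaf; applying $f^n$, which preserves $\Fcu$ and permutes the compact center leaves, puts $x$ in the $cu$-leaf of a compact center leaf. This is the desired conclusion.

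\emph{Main obstacle.} The delicate point is Step 1 — ruling out $T_n\to 0$ and keeping $T_n$ bounded. One must carefully relate the center-displacement between $f^{-n}x$ and $f^{-n}y$ to the fundamental domains $I_c$: since $f$ is a ``return map'' of the flow $F$ with return time $\tau$, the displacement $T_n$ is realized by flowing for a total time equal to a sum of $\tau$-values along the orbit, and one needs that this cannot shrink to zero nor blow up. Bounded-below is relatively easy from $\min\tau>0$; bounded-above requires that the original displacement $T$ between $x$ and $y$ is finite and that $f^{-n}$ cannot arbitrarily stretch center segments — which is exactly the kind of control the flow-type hypothesis is designed to give, but it needs to be made precise (e.g. by noting $f^{-1}$ acts on $\RR\times M$ compatibly with $F$, so the center-displacement cocycle is a genuine cocycle over $f$ and stays in a compact set along any orbit segment of bounded combinatorial length). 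Once $T_n$ is trapped away from $0$ and $\infty$, the compactness argument of Step 2 is routine.
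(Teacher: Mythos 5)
Your overall strategy (iterate backwards, use uniform contraction of $\Fu$ under $f^{-1}$, extract a limit) is natural, but it has a fatal flaw at the very last step. From $f^{-n_k}x\to p$ and $F(p,T_\infty)=p$ you only learn that the \emph{backward orbit of $x$ accumulates} on a compact center leaf; this does not place $x$ (or any $f^{-n}x$) in the $cu$-leaf of that compact leaf. Your parenthetical justification --- that ``being in the $cu$-leaf of a compact center leaf is determined by a neighborhood'' --- is false: the union $\bigcup_\gamma\Fcu(\gamma)$ over the (countably many) compact center leaves is a countable union of positive-codimension immersed submanifolds, hence has \emph{empty interior}; indeed the paper proves that its complement $M^u$ is dense. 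So ambient proximity of $f^{-n}x$ to $p$ gives nothing, and the conclusion of the lemma does not follow. The paper's proof avoids this by never leaving the leaf: it takes the center segment $L=[x,y]_c$, builds a bijection $\psi:L\to f(L)$ out of the stable holonomy between $[x,f(x)]_c$ and $[y,f(y)]_c$ (this is where the hypothesis $y\in\Fs(x)$, dually $y\in\Fu(x)$, is really used), shows $\sum_n d_H(f^nL,f^{n+1}L)<\infty$ by uniform contraction, and obtains the compact center leaf as an \emph{intrinsic} Hausdorff limit inside the fixed leaf $\Fcu(x)$ --- which is exactly what the statement requires.

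There is a second, smaller gap in your Step 1. The upper bound on $T_n$ is actually the easy part (the segment $[f^{-n}x,f^{-n}y]_c$ spans the same number of fundamental domains $[z,f(z))_c$ as $[x,y]_c$, each of length at most $\max\tau$), but the lower bound is \emph{not} ``relatively easy from $\min_M\tau>0$'': if $y$ lies strictly between $x$ and $f(x)$ on the center leaf, the segment $[f^{-n}x,f^{-n}y]_c$ contains no full fundamental domain and could a priori shrink to a point. Ruling this out requires the uniform local product structure (two distinct points of $\Fu(z)\cap\Fc(z)$ are uniformly separated along the center, by transversality of the foliations); this is precisely the extra argument the paper deploys in its second case ($x\le y<f(x)$), where it iterates the holonomy $h$ and uses that the center length of $[z,h(z))_c$ is bounded below. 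As written, your proposal neither closes this case nor, more importantly, connects the limiting compact leaf to $\Fcu(x)$, so the proof is not complete.
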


\begin{proof}
To simplify notations a bit, we prove the symmetric statement involving stable and center stable manifolds.

Let $x,y$ be two distinct points with $y\in\Fs(x)\cap\Fc(x)$ and $x\leq y$ along the center leaf (recall that $f$ being flow-type, it maps each center leaf to itself preserving some orientation). Let $L$ be the center segment $[x,y]_c$.  We are going to show that $f^n(L)$ converges to a closed center leaf $\gamma$ as a compact subset with respect to the intrinsic distance in $\Fcu(x)$. The existence  of $\gamma$ in $\Fcu(x)$ will prove the lemma.

There are two cases, depending on the position of $y$ with respect to $f(x)$. In the first case, we have: 
 \begin{equation}\label{eq-case1}
 x\leq f(x) \leq y\leq f(y)
 \end{equation}
{hence} the decompositions (the unions are disjoint up to one point):
 $$\begin{aligned}
    &L{=[x,y]_c}=[x,f(x)]_c \cup [f(x),y]_c\\
    &f(L)=[f(x),f(y)]_c=[f(x),y]_c \cup[y,f(y)]_c.
  \end{aligned}$$
For large enough $k\geq0$, the points $f^k(x)$ and $f^{k}(y)$ are arbitrarily close while  the length of $[f^k(x),f(f^k(x))]_c$  is bounded independently of $k$. We can assume that this holds already for $k=0$. Thus there is a $s$-holonomy  $h:[x,f(x)]_c\to[y,f(y)]_c$ such that $h(x)=y$. 

Now define the map $\psi:L\to f(L)$ by $\psi|[x,f(x)]_c=h$ and $\psi|(f(x),y]_c=Id$.
Observe that it is a bijection such that $\psi(x)\in\Fs_\loc(x)$ and that there is a finite bound $\delta:=\sup_{z\in L}d(z,\psi(z))$ on the Hausdorff distance $d_H(L,f(L))$.

This implies that, for any $n\geq0$,
 $$
   d_H(f^n(L),f^{n+1}(L))\leq \sup_{z\in L} d(f^n(z),f^n(\psi(z)))
 $$
{decays exponentially fast since $\Fs$ is uniformly contracted}. It follows that $$\sum_{n\geq0} d_H(f^n(L),f^{n+1}(L))<\infty$$ and so $f^n(L)$ converges to some nonempty compact $\gamma$ in the complete space  of all nonempty compact subsets of $\Fcu(x)$. {Obviously $\gamma$ is a closed center leaf.}
This proves the lemma in the case \eqref{eq-case1} holds. In the other case,
 $$
    x\leq y< f(x)\leq f(y).
  $$
As above, perhaps after replacing $x,y$ by some forward iterates, we find a $s$-holonomy $h:[x,f(x)]_c\to [y,f(y)]_c$ such that $h(x)=y$. By transversality of $\Fc$ and $\Fs$, the length of $[z,h(z))_c$ is lower bounded {for $z\in M$}. Hence $h^{N}(x)<f(x)\leq h^{N+1}(x)$ for some $N\geq1$, and one has:
 $$\begin{aligned}
    &L = [x,h^{-N}(f(x))]_c \cup [h^{-N}(f(x)),y]_c\\
    &f(L) = [f(x),h^N(y)]_c \cup [h^N(y),f(y)]_c
 \end{aligned}$$
and the following maps are $s$-holonomies:
 $$\begin{aligned}
   & h^N:[h^{-N}(f(x)),y]_c\longrightarrow [f(x),h^N(y)]_c\\
   & h^{N+1}:[x,h^{-N}(f(x))]_c  \longrightarrow [h^N(y),f(y)]_c
  \end{aligned}$$
(since $h(f(x))=f(h(x))=f(y)$).

As before, we conclude that $f^n(L)$ converges to a compact center leaf in $\Fcu(x)$.
\end{proof}

{Center-unstable leaves have a simple structure:}

\begin{lemma}
Let $f\in\Diff^1(M)$ be a flow-type diffeomorphism.
For any $x\in M$,
 $$
    \Fcu(x) =\bigcup_{y\in\Fc(x)} \Fu(y).
  $$
{In particular, $\Cyl^u = \bigcup_{\gamma} \Fu(\gamma)$ where $\gamma$ ranges over the countably many compact center leaves.}
\end{lemma}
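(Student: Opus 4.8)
The statement asserts that each center-unstable leaf is exactly the union of the strong unstable leaves through the points of a single center leaf, and as a consequence that $\Cyl^u$ decomposes as the union of the stable saturations of the (countably many) compact center leaves. The plan is to prove the first equality by a double inclusion, exploiting dynamical coherence (\mftDC) together with the flow type structure, and then to derive the description of $\Cyl^u$ by combining this with Lemma~\ref{circles} and Remark~\ref{rem-countable-closed}.

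\textbf{The two inclusions.} The inclusion $\bigcup_{y\in\Fc(x)}\Fu(y)\subset\Fcu(x)$ is the easy direction: since $\Fcu$ is tangent to $E^{c}\oplus E^{u}$, it contains both the center leaf through any of its points and every strong unstable leaf through any of its points; applying this with $\Fcu(x)$, which contains $\Fc(x)$, gives $\Fu(y)\subset\Fcu(x)$ for every $y\in\Fc(x)$, hence the inclusion. For the reverse inclusion $\Fcu(x)\subset\bigcup_{y\in\Fc(x)}\Fu(y)$, the plan is the standard connectedness/open-closed argument inside the leaf $\Fcu(x)$ equipped with its intrinsic topology: let $N:=\bigcup_{y\in\Fc(x)}\Fu(y)$, a subset of the connected manifold $\Fcu(x)$. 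First I would check $N$ is open in $\Fcu(x)$: within a small foliation box of $\Fcu$ adapted to the two subfoliations $\Fc$ and $\Fu$ (which subfoliate each $cu$-leaf, as recalled in the Background section), any point near $z\in N$ lies on the strong unstable leaf of a point on a center leaf through a point near $z$; one must then argue this nearby center leaf is the same leaf $\Fc(x)$, which follows because center leaves through nearby points of the box stay in the box and the box meets $\Fc(x)$. Then I would check $N$ is closed: if $z_n\in N$ with $z_n\to z$ in $\Fcu(x)$, take a foliation box around $z$; for large $n$, $z_n$ lies in the box and on $\Fu(y_n)$ with $y_n\in\Fc(x)$, and $y_n$ can be taken in the box as well, so the center leaf through $y_n$ within the box is a plaque that limits onto the center plaque through $z$ — the latter therefore also belongs to $\Fc(x)$ by uniqueness of the center foliation, placing $z\in N$. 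Since $N$ is nonempty, open, and closed in the connected leaf $\Fcu(x)$, it equals $\Fcu(x)$.

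\textbf{The description of $\Cyl^u$.} By definition $\Cyl^u$ is the union of those unstable leaves meeting some compact center leaf. If $\Fu(p)$ meets a compact center leaf $\gamma$, say at $q\in\gamma$, then $\Fu(p)=\Fu(q)\subset\Fcu(q)=\Fcu(\gamma)$, and by the first part $\Fcu(\gamma)=\bigcup_{y\in\gamma}\Fu(y)$; moreover $\Fcu(\gamma)=\Fs(\gamma)$ in the sense that the $cs$-holonomy along the stable foliation identifies the $cu$-leaf of $\gamma$ with the stable saturation of $\gamma$ — more directly, every point of $\Fcu(\gamma)$ lies on the unstable leaf of some point of $\gamma$, and by Lemma~\ref{circles} any center leaf inside $\Fcu(\gamma)$ meeting an unstable leaf in more than one point is itself compact, so the center leaves foliating $\Fcu(\gamma)$ are the $f$-iterates' limits of center segments, all lying in $\Fs(\gamma)$. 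Hence $\Fu(p)\subset\Fs(\gamma)$, giving $\Cyl^u\subset\bigcup_\gamma\Fs(\gamma)$; the reverse inclusion is immediate since any point of $\Fs(\gamma)$ lies on a $cu$-leaf of $\gamma$, hence on an unstable leaf meeting $\gamma$. Finally, Remark~\ref{rem-countable-closed} gives that $\gamma$ ranges over a countable set.

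\textbf{Main obstacle.} The genuinely delicate point is the closedness of $N$ in $\Fcu(x)$: one must rule out that a sequence of unstable leaves based on the fixed center leaf $\Fc(x)$ accumulates on an unstable leaf based on a \emph{different} center leaf. This is where the structure of the $cu$-leaf as a fiber bundle over the (one-dimensional, oriented) center leaf — which is implicit in flow type and in Lemma~\ref{circles} excluding center leaves that wrap around unstable leaves — is really used, together with the local product structure of $\Fcu$ by its subfoliations $\Fc,\Fu$. Everything else is a routine open-closed argument on a connected leaf.
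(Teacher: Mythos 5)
Your open--closed argument does not close, and the gap is precisely the content of the lemma. For \emph{openness} of $N=\bigcup_{y\in\Fc(x)}\Fu(y)$ you argue inside a single foliation box around $z$ and assert that ``the box meets $\Fc(x)$''; but if $z$ lies far out along $\Fu(y)$, a small box around $z$ need not meet $\Fc(x)$ at all. This is repairable: one transports $\Fc_\loc(y)$ to $\Fc_\loc(z)$ by the $u$-holonomy along the compact unstable arc from $y$ to $z$, so that every center plaque point near $z$ lies on the unstable leaf of a point of $\Fc(x)$ near $y$. For \emph{closedness}, however, the assertion that ``$y_n$ can be taken in the box as well'' is unjustified and in general false: $y_n$ is the (a priori unique) point $\Fu(z_n)\cap\Fc(x)$, and nothing bounds $d_u(z_n,y_n)$. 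What must be excluded is exactly that the basepoints $y_n$ escape to infinity along $\Fc(x)$ while $z_n\to z$, leaving $\Fu(z)\cap\Fc(x)=\emptyset$. Two transverse subfoliations of a connected leaf need not have global product structure, so no purely local foliation-box argument can do this; the dynamics (normal hyperbolicity of the center leaves, the flow-type structure, e.g.\ the characterization of $\Fcu(x)$ as the set of points whose backward orbits approach that of $\Fc(x)$) must enter here. Your ``Main obstacle'' paragraph correctly identifies closedness as the delicate point but then appeals to ``the structure of the $cu$-leaf as a fiber bundle over the center leaf,'' which is essentially a restatement of what is to be proved. Note that the paper itself does not argue this: it cites \cite[Prop.~2.9]{SY} and only remarks that the hypothesis there can be weakened to flow type, so a genuinely self-contained proof would need the dynamical input that your sketch omits.

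The derivation of the second clause is also off. From the definition of $\Cyl^u$ one gets immediately $\Cyl^u=\bigcup_\gamma\Fu(\gamma)$, and the first part of the lemma upgrades this to $\bigcup_\gamma\Fcu(\gamma)$; the stable saturation $\Fs(\gamma)$ appears only when the lemma is applied to $f^{-1}$ to describe $\Cyl^s$ (compare the use made of it in Lemma~\ref{lem-CuNull}(3)). Your attempt to prove the identity literally as stated leads you to write ``$\Fcu(\gamma)=\Fs(\gamma)$'' and ``$\Fu(p)\subset\Fs(\gamma)$,'' which cannot hold: these sets have different dimensions and are transverse rather than equal. You should either prove the $\Fu(\gamma)$/$\Fcu(\gamma)$ version and flag the discrepancy, or derive the stable statement for $f^{-1}$; the countability via Remark~\ref{rem-countable-closed} is the only part of this clause you handle correctly.
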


This is \cite[Prop. 2.9]{SY}. Note that the statement there assume that $f$ is a perturbation of some Anosov flow, but inspection of the proof shows that  flow-type is sufficient. Now we can show that the exceptional {sets $\Cyl^u$ and $\Cyl^s$ are negligible.} 

\begin{lemma}\label{lem-CuNull}
{Let $f$ be  a diffeomorphism on a compact manifold with flow-type. Then the following holds:
 \begin{enumerate}[\qquad(1)]
 \item\label{item-muCsnull} For any $\mu\in\Proberg(f)$ not carried by a compact center leaf, $\mu(\Cyl^{s})=\mu(\Cyl^u)=0$. 
 \item\label{item-nucuCsnull}  If $f$ admits a Margulis system of $cu$-measures $\{\nucu_x\}_{x\in M}$ with dilation factor $D_u\ne 1$ for any $x\in M$, {any compact center leaf has zero $\nucu_x$-measure and more generally $\nucu_x(\Cyl^s)=0$.}
 \end{enumerate}
 }
\end{lemma}

{
We will deduce from item~\eqref{item-nucuCsnull}  the unstable Margulis conditionals are atomless.
}

\begin{proof}
{
We prove item \eqref{item-muCsnull} by contradiction. Let $\mu\in\Proberg(f)$ be such that $\mu(\Cyl^s)>0$.  {The previous lemma {and $\sigma$-additivity} imply that  $\mu$ gives positive measure  to $\Fs(\gamma)$ for some compact center leaf $\gamma$. By ergodicity, we have $\mu(\Fs(\gamma))=1$.}  Iterating forward, this implies that $\mu(\gamma)=1$. This contradicts the assumption and proves $\mu(\Cyl^s)=0$. Iterating backwards yield $\mu(\Cyl^u)=0$.

We turn to item \eqref{item-nucuCsnull}. 
First let $\gamma$ be a compact center leaf. Since it is fixed, $m^{cu}_x(\gamma)=D_u \cdot \nucu_x(\gamma)$. As $D_u\ne 1$ and $\nucu_x(\gamma)<\infty$ (by the Radon property), we obtain $\nucu_x(\gamma)=0$, the first claim.

Now let us prove that $\nucu_x(\Fs(\gamma))=0$ for all arbitrary compact center leaves $\gamma$. Fix such a leaf $\gamma$. Since $\gamma\subset\Fs(\gamma)\cap\Fu(\gamma)$, we can find a countable cover
 $$
    \{y\in\Fcu(x): y\in\Fs(\gamma)\} \subset \bigcup_{i\geq1} h^s_i(\gamma\cap U_i)) 
 $$
where $h^s_i:U_i\to \Fcu(x)$, $i\geq1$, are  $s$-holonomies with $U_i\subset\Fcu(\gamma)$. Now, by $s$-invariance of the $cu$-system and the fact that $\nucu_x(\gamma)=0$, each term on the right hand side has zero $m^{cu}_x$-measure. This shows that $\nucu_x(\Fs(\gamma))=0$

The claim $\nucu_x(\Cyl^s)=0$ follows by $\sigma$-additivity and the previous lemma.
}
\end{proof}

\begin{proof}[Proof of Theorem \ref{mainthm-conditionals} ]
We begin by defining the measures $m^u_x$ along the unstable foliation.  For each $x\in  M^u$ and Borel subset $A\subset\Fu(x)$ we let
 $$
    m^u_x(A) = m^{cu}_x(\hat A) \text{ with } \hat A:=\bigcup_{y\in A} [x,f(x))_c.
 $$
This is well-defined since $\hat A$ is measurable when $A$ is. Consider a $u$-leaf $\Fu(x)$ for some  $x\in M^u$. Let $A_1,A_2,\dots$ be pairwise disjoint measurable sets. By Lemma~\ref{circles}, 
the extensions $\hat A_1,\hat A_2,\dots$ are also pairwise disjoint.
Hence, 
 $$
   m^u_x(\bigsqcup_n A_n)=m^{cu}_x(\widehat{\bigsqcup_n A_n})=m^{cu}_x(\bigsqcup_n \hat A_n) =\sum_n m^u_x(A_n),
 $$
proving the $\sigma$-additivity. Obviously, $m^u_x(\emptyset)=0$. Thus $m^u_x$ is a measure. 

\medbreak

{$\Cyl^u$ is a countable union of submanifolds with positive codimension. Thus the Baire theorem shows that $M^u:=M\setminus\Cyl^u$ is  dense. Lemma~\ref{lem-CuNull} shows that $\Cyl^u$ has zero measure for any ergodic measure {not concentrated on a single compact leaf.} Item~\eqref{item-Mu} is proven.}

 \medbreak
 
Observe that relative compactness, respectively nonempty interior, hold for $\hat A$ if it holds for $A$ yielding that $m^u_x$ is Radon and fully supported. {Item~\eqref{item-nucuCsnull} of} Lemma~\ref{lem-CuNull} implies that {any center leaf is $m^{cu}_x$-negligible for all $x\in M$, hence} each $m^u_x$ is atomless. {Item~\eqref{item-Radon} follows.}

\medbreak

We prove that  $\{m^u_x\}_{x\in M\setminus\Cyl^u}$ is a Margulis system. Observe that   $\{\nucu_x\}_{x\in M}$ is a Margulis {system with dilation $D_u>1$} by Proposition~\ref{cumargulis}.  {To determine the  dilation of $\{m^u_x\}$}, note that $\widehat{f(A)}=f(\hat A)$, since $f([x,f(x))_c)=[f(x),f^2(x))_c$ so that:
 $$
   m^u_x(f(A))=m^{cu}_x(\widehat{f(A)})= m^{cu}_x(f(\hat{A}))=D_u m^{cu}_x(\hat A)=D_u m^{u}_x(A),
 $$
Item~\eqref{item-Margulis} is proved, {except for the measurability which will be established at the end of this section.}

\medbreak

We turn to  item \eqref{item-cs-qi}, the quasi-invariance under $cs$-holonomies. {We show the stronger claim in eq.~\eqref{eqLocalCSinv} (recall that a holonomy is by definition a composition of local holonomies).} Since $E^c$ and $E^s$ are transverse, we see that:

\begin{lemma}\label{lem-css}
Let $x,y\in M$ satisfy $\Fcs(x)=\Fcs(y)$ with $d_{cs}(x,y)$ small. Then there exists a {local} $s$-holonomy $h^{s}:U\to V$ with $U$ a neighborhood of $[x,f(x)]_c$ s.t. 
 $$h^{s}([x,f(x))_c)\subset(f^{-1}(y),f^2(y))_c.$$
\end{lemma}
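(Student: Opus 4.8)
The plan is to construct $h^s$ by extending the local stable holonomy along the entire compact center arc $[x,f(x)]_c$. The two ingredients are the uniform local product structure of the transverse foliations $\Fs$ and $\Fcu$ on the compact manifold $M$, and the observation that, $f$ being flow type, all the arcs $[z,f(z)]_c$ ($z\in M$) have $d_c$-length $\tau(z)$ bounded between $\tau_{\min}:=\min_M\tau>0$ and $\tau_{\max}:=\max_M\tau<\infty$.

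First I would record the uniform product structure: by continuity and transversality of $\Fs$ and $\Fcu$, there are $\delta_0>0$ and $C_0\ge1$ such that whenever $z'\in\Fcs(z)$ with $d(z,z')<\delta_0$, the point $[z,z']_s:=\Fs_{\loc}(z)\cap\Fcu_{\loc}(z')$ is well defined, varies continuously with $(z,z')$, lies on $\Fc(z')$ (since it belongs to $\Fcu(z')\cap\Fcs(z')$), and satisfies $d(z,[z,z']_s)\le C_0\,d(z,z')$ and $d([z,z']_s,z')\le C_0\,d(z,z')$. I would also note that, because $f$ preserves $\Fs$, maps each center leaf to itself, and fixes the $cu$-leaf of every point (points on one center leaf share a $cu$-leaf), the stable holonomy is canonically determined and commutes with $f$ wherever defined; in particular $f([z,z']_s)=[f(z),f(z')]_s$ whenever both sides make sense.

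Next I would extend the holonomy over $[x,f(x)]_c$ by covering this arc with finitely many overlapping foliation charts for $(\Fs,\Fcu)$ and chaining the product structure of the previous step through them. This should yield constants $\delta_1>0$ and $C_1\ge1$, \emph{independent of $x$}, such that if $\Fcs(y)=\Fcs(x)$ and $d(x,y)<\delta_1$, then the $\Fs$-holonomy $h^s$ from $\Fcu(x)$ to $\Fcu(y)$ is a homeomorphism on some open neighborhood $U$ of $[x,f(x)]_c$ in $\Fcu(x)$, with $\sup_{w\in U}d(w,h^s(w))\le C_1\delta_1$ and $h^s([x,f(x)]_c)\subset\Fc(y)$ (the image is connected, lies in $\Fcu(y)\cap\Fcs(y)$, and contains $h^s(x)$, hence lies in the center leaf through $h^s(x)$, which is $\Fc(y)$). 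The point requiring care — and the one I expect to be the main obstacle — is the uniformity of $\delta_1,C_1$ in $x$; it should follow because all arcs $[z,f(z)]_c$ have length $\le\tau_{\max}$ (so the number of charts and the accumulated constants are uniformly bounded) and the set $\{(z,s):z\in M,\ 0\le s\le\tau_{\max}\}$ of marked center arcs $\{F(z,t):0\le t\le s\}$ is compact. Shrinking $\delta_1$ I would also arrange $C_0(C_1+1)\delta_1<\tau_{\min}$.

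Finally I would locate the image. Set $x':=h^s(x)\in\Fs(x)\cap\Fc(y)$; then $d(x',y)\le(C_1+1)\delta_1$, and since $x'\in\Fc_{\loc}(y)$ this gives $d_c(x',y)\le C_0(C_1+1)\delta_1<\tau_{\min}$, so that $x'$ lies in $(f^{-1}(y),f(y))_c$ — indeed near $y$ — because the two halves $(f^{-1}(y),y]_c$ and $[y,f(y))_c$ have lengths $\tau(f^{-1}(y))\ge\tau_{\min}$ and $\tau(y)\ge\tau_{\min}$. By the commutation relation, $h^s(f(x))=f(x')\in f\big((f^{-1}(y),f(y))_c\big)=(y,f^2(y))_c$, near $f(y)$. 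Since $h^s$ restricted to the connected arc $[x,f(x))_c$ is an injective continuous map into the $1$-manifold $\Fc(y)$, its image is a connected arc joining $x'$ to $f(x')$; as $x'$ and $f(x')$ both lie in $(f^{-1}(y),f^2(y))_c$, and this set is an embedded interval (the degenerate case where $\Fc(y)$ is a compact leaf on which the arc $(f^{-1}(y),f^2(y))_c$ fails to be embedded makes the inclusion trivial), the whole image is contained in $(f^{-1}(y),f^2(y))_c$, which is the assertion.
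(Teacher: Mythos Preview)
Your argument is correct. The paper does not actually give a proof of this lemma: it simply writes ``Since $E^c$ and $E^s$ are transverse, we see that:'' and then states the lemma, treating it as immediate. Your proof is a careful and correct expansion of precisely that one-line justification --- uniform local product structure of $\Fs$ and $\Fcu$, extension of the holonomy over the compact arc $[x,f(x)]_c$ via the uniform bound $\tau_{\max}$ on center-arc lengths, and the commutation $f\circ h^s=h^s\circ f$ to pin down the endpoints --- so there is no genuine difference in approach, only in level of detail.
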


Let us prove the $cs$-quasi-invariance. Since this is a local property, we can assume that $A\subset\Fu_\delta(x)$ for $x\in M^u$ with $\delta>0$ small so that the $cs$-holonomy extends to $h^{cs}(A)=B$. By construction and the Margulis property, 
 $$
   m^{cu}_y(f^{-1}(\hat B)\cup \hat B \cup f(\hat B))=(D_u^{-1}+1+D_u)m^u_y(B).
 $$
By the previous lemma, $h^s(\hat{A})\subset f^{-1}(\hat B)\cup\hat B\cup f(\hat B)$, so the $s$-invariance of the $cu$-conditionals  yields:
 $$
  m^u_x(A) = m^{cu}_x(\hat A) = m^{cu}_y(h^s(\hat A)) \leq (D_u^{-1}+1+D_u)m^u_y(B).
  $$
We have shown $m^{u}_x<<(h^{cs})^{-1}_*(m^{u}_y)$ with Radon-Nikodym derivative bounded by $D_u^{-1}+1+D_u$. Exchanging the roles of $x$ and $y$ we obtain the equivalence of the measures. 

{This finishes the proof of item~\eqref{item-cs-qi} and of Theorem~\ref{mainthm-conditionals} except for the measurability which will follow from the next lemma.}
\end{proof}

{Recall that $M^u=M\setminus\mathcal C^u$ is a measurable subset.} Fix a local product cube $C$ around $p \in {M^u}$.  Using the local product structure, $C$ may be identified as $\mathcal{F}^{cs}_{\loc}(p) \times \mathcal{F}^u_{\loc}(p).$ 

{
\begin{lemma} \label{usc}
For any Borel measurable, bounded function $\phi:M\to\RR$, the map
 $$
    x \in \mathcal{F}^{cs}_{loc}(p) \cap {M^u} \mapsto m^u_x(1_{\Fu_\loc(x)}\cdot\phi)
  $$ 
is measurable.
\end{lemma}

\begin{proof}
Clearly, it is enough to show that for any Borel measurable subset $K \subset C$ the map $$x \in \mathcal{F}^{cs}_{loc}(p) \cap {M^u} \mapsto m^u_x(K_x)$$ is measurable where $K_x = K \cap \mathcal{F}^u_{loc}(x)$. 
{We begin by the special case when $K$ is a {compact} subset.} 
As  $\mathcal{F}^u_{loc}$ varies continuously, and $K$ is {compact,} we conclude that $ x \mapsto K_x$ is upper semicontinuous. Now we proceed to show that  the map $x \mapsto m^u_x(K_x)$ {defined on}  $ \mathcal{F}^{cs}_{loc}(p) \cap \mathcal{C}^u$ is upper semi continuous and in particular measurable. 

As $m^{cu}_x$ is a locally finite measure, given any $\epsilon > 0$ there exists $\epsilon_0>0$ such that $m^{cu}(U_{\epsilon_0} (\hat{K_x})) \leq m^{cu}(\hat{K_x}) + \epsilon$ where $U_{\epsilon_0} (\hat{K_x})$ is the $\epsilon_0-$neighborhood of $K_x$ inside the $cu-$leaf. By construction $m^u_x(K_x) = m^{cu}(\hat{K_x}).$ If $y$ is close enough to $x$  we have 
$$
 h_{y,x}^{s}(\hat{K_y}) \subset U_{\epsilon_0}(\hat{K_x}).
$$
 Indeed, there exists $R > 0$ such that for any such $y \in C,  \hat{K_y} \subset \mathcal{F}^{cu}(y, R).$ Now, if $y$ is close enough to $x$ the restriction of stable holonomy $h_{y, x}^s $ to $\mathcal{F}^{cu}(y, R)$ is close to the identity.
So we obtain that:
\begin{align*}
m^u(K_y) = m^{cu}(\hat{K_y}) &= m^{cu}(h_{y,x}^s(\hat{K_y})) \leq m^{cu}(U_{\epsilon_0}(\hat{K_x}))\\
&\leq m^{cu}(\hat{K_x}) + \epsilon = m^u(K_x) + \epsilon.
\end{align*}  
and this proves the upper semicontinuity in the case $K$ is closed.

Let $\mathcal{K}$ be the collection of all subsets $K \subset C$ such that $x \mapsto m^u_x (K_x)$ is measurable. 
{Note that this is a Dynkin system, i.e.,}
\begin{enumerate}
\item If $A, B \in \mathcal{K}$ and $A \subset B$ then $B \setminus A \in \mathcal{K},$
\item  If $A_1 \subset A_2 \subset \cdots \subset A_n \subset \cdots \in \mathcal{K}$ then $\bigcup A_i \in \mathcal{K}$
\item  $C \in \mathcal{K}.$
\end{enumerate}
{Since $\mathcal K$ contains the closed subsets of $C$ (which form a $\pi-$system), Dynkin $\pi$-$\lambda$-theorem implies that it contains the $\sigma-$algebra generated by the closed subsets, i.e., all the Borel subsets of $C$.}
\end{proof}
}


\section{Dichotomy for flow-type diffeomorphisms}\label{s-dicho}


This section is devoted to the proof of our main result on the MMEs of flow-type diffeomorphisms {(Theorem \ref{mainthm-dichotomy}).}
 
We first build an invariant probability measure $\mu^{cu\otimes s}$  from the $cu$- and $s$-Margulis conditionals. We deduce that dilation factors of these conditionals are inverse of each other: {$D_sD_u=1$}. 
We then show that any MME with nonnegative central exponent has $u$-disintegrations given by the $u$-Margulis conditionals by using the entropy along the unstable foliation as introduced by Ledrappier and Young \cite{LYII} and a classical convexity argument of Ledrappier (see \cite{Ledrappier13} for a pedagogical exposition). 

{A Hopf argument now gives the uniqueness of the MME with nonpositive center exponent. We finish the proof of the dichotomy by building twin measures and considering $f^{-1}$ instead of $f$.}
{We conclude this section by showing some additional properties: uniqueness of the Margulis systems, properties on the sign of the central exponent of $\lambda_c(\mu^{cu\otimes u})$, and the Bernoulli property of hyperbolic MMEs.}

\subsection{Quasi-product measures}
Given a  diffeomorphism $f\in\Diff(M)$ that is flow-type {and minimal strong foliations, we build an invariant probability measure from} the Margulis systems $\{m^{cu}_x\}_{x\in M}$ and $\{m^s\}_{x\in M\setminus\Cyl^s}$  as those provided by Proposition~\ref{cumargulis} and Theorem \ref{mainthm-conditionals} applied to the inverse $f^{-1}$.

\begin{proposition}\label{prop-LPS}
Let $f\in{\Diff^{1}}(M)$ be flow-type with minimal strong foliations.  Assume that there are Margulis systems:
 \begin{itemize}
  \item[--] $\{m^{cu}_x\}_{x\in M}$ on $\Fcu$ with dilation $D^u$ which is $s$-invariant;
  \item[--]  $\{m^{s}\}_{x\in M\setminus\Cyl^s}$ on $\Fs$ with dilation $D^s$ which is $cu$-quasi-invariant;
  \item[--] $\Cyl^s$ satisfies {item~\eqref{item-nucuCsnull}} of Lemma~\ref{lem-CuNull}.
\end{itemize}  
Then there is an invariant Borel probability measure $\mu^{cu \otimes s}$ such that for each $y\in M$, there is a neighborhood in which its conditional measures along $\mathcal{F}^s$ are given by $\{m^s_x\}_{x\in M\setminus\Cyl^s}$ with $m^{cu}_{y}$ as quotient measure. 
Moreover,  $D_u=D_s^{-1}$.
\end{proposition}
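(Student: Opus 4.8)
\textbf{Plan of proof for Proposition~\ref{prop-LPS}.}
The construction follows Margulis' recipe for building the Bowen--Margulis measure as a local product of the unstable and stable conditionals, adapted to the present setting where the $cu$-system plays the role of the unstable conditionals and the ``quotient'' direction is realized inside the $cu$-leaves. First I would fix a finite cover of $M$ by foliation boxes $B$ of the form $B\cong \Fcu_B(z)\times \Fs_B(z)$, in which every point $y\in B$ is written uniquely as the intersection $[y_{cu},y_s]$ of a local $cu$-leaf through a point $y_{cu}\in\Fcu_B(z)$ with a local $s$-leaf through $y_s$. On each such box I define a measure $\mu_B$ by integrating the stable conditionals against the $cu$-conditional of the transversal: for $\phi\in C_c(B)$,
\begin{equation*}
  \mu_B(\phi) := \int_{\Fcu_B(z)} \left( \int_{\Fs_B(w)} \phi(u)\, dm^s_{w}(u) \right) dm^{cu}_{z}(w).
\end{equation*}
The $s$-invariance of the $cu$-system and the $cu$-quasi-invariance of the $s$-system are exactly what is needed to check, via Lemma~3.2 of \cite{AVW} and the change-of-transversal formulas, that the $\mu_B$ agree up to a positive multiplicative constant on overlaps; Lemma~\ref{lem-CuNull}(3) guarantees that the exceptional set $\Cyl^s$ (where the $s$-conditionals are undefined) is $\mu_B$-null, so it causes no obstruction. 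Rescaling box by box and using a partition of unity, one patches the $\mu_B$ into a single locally finite Borel measure $\mu^{cu\otimes s}$ on $M$, which by compactness of $M$ is finite; normalize it to a probability measure. By construction its disintegration along $\Fs$ near any $y$ is $\{m^s_x\}$ with $m^{cu}_y$ as quotient measure.

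\textbf{Invariance.} Next I would prove $f$-invariance. The point is that $f$ maps $cu$-leaves to $cu$-leaves and $s$-leaves to $s$-leaves, multiplying the $cu$-conditionals by $D_u^{-1}$ (i.e.\ $f_*m^{cu}_x = D_u^{-1}m^{cu}_{fx}$) and the $s$-conditionals by $D_s^{-1}$. Applying the change of variables $y=f^{-1}(y')$ inside the product structure of a box and its image, the two scalings combine: for a test function $\phi$ supported in a small box,
\begin{equation*}
  \mu^{cu\otimes s}(\phi\circ f^{-1}) = D_u^{-1} D_s^{-1}\, \mu^{cu\otimes s}(\phi) \cdot (\text{correction from quasi-invariance}),
\end{equation*}
and since the total measure is preserved one concludes both that $f_*\mu^{cu\otimes s}=\mu^{cu\otimes s}$ and, crucially, that $D_u^{-1}D_s^{-1}=1$, i.e.\ $D_s=D_u^{-1}$. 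Making this precise is where I expect the main work: because the $s$-system is only \emph{quasi}-invariant under $cu$-holonomy (not invariant), the measures $\mu_B$ on overlapping boxes match only up to constants and the cocycle of these constants must be controlled; the cleanest route is to first verify that the family $\{\mu_B\}$ defines a single measure \emph{defined up to scaling} (a genuine measure only after the global normalization available because all relevant leaves are noncompact off $\Cyl^s$), then observe that both $f_*\mu^{cu\otimes s}$ and $D_u D_s \cdot \mu^{cu\otimes s}$ have the same local $s$-conditionals and the same $cu$-quotients, hence are proportional by uniqueness of disintegration, and finally pin down the proportionality constant to be $1$ by looking at the total mass on $M$. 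An equivalent and perhaps more robust alternative is to build $\mu^{cu\otimes s}$ directly as a weak-$*$ limit of $\frac1n\sum_{k=0}^{n-1} f^k_*\nu$ for a suitable reference measure $\nu$ adapted to the local product structure, which makes invariance automatic and reduces the task to identifying the conditionals of the limit; I would present the disintegration-uniqueness argument as the main line and mention this averaging construction as a remark.

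\textbf{Main obstacle.} The delicate point throughout is bookkeeping the multiplicative constants: the $cu$-conditionals $m^{cu}_x$, the $s$-conditionals $m^s_x$, and the local product measures $\mu_B$ are all naturally defined only up to scaling, and one must check that the global normalizations (the one on the $c$-segments $[x,f(x))_c$ for $m^{cu}$, the analogous one for $m^s$, and the probability normalization for $\mu^{cu\otimes s}$) are mutually compatible so that the scaling ambiguities genuinely cancel and yield $D_sD_u=1$ rather than merely $D_sD_u=\text{const}$. Once the invariant probability measure and its conditional structure are in hand, the identity $D_s = D_u^{-1}$ drops out by comparing $\mu^{cu\otimes s}(B)$ with $\mu^{cu\otimes s}(f^{-1}B) = \mu^{cu\otimes s}(B)$ through the product formula, exactly as in the proof of Lemma~\ref{lem-Dbigger} but now with both foliations' dilations entering symmetrically.
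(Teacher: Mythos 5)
Your construction is essentially the paper's: define a local measure $m_p(\phi)=\int_{\Fcu_\loc(p)}\alpha^p_\phi(y)\,dm^{cu}_p(y)$ with $\alpha^p_\phi(y)=m^s_y(1_{\Fs_\loc(y)}\cdot\phi)$ on small product boxes, patch with a partition of unity, and extract the relation between $D_u$ and $D_s$ from the finiteness of the total mass. The one place where your plan goes off target is the compatibility of the local measures on overlaps, which you single out as the main difficulty: you expect the $\mu_B$ to agree only up to multiplicative constants and propose to control the resulting cocycle (or to retreat to projective measures plus uniqueness of disintegration). In fact no cocycle arises. Both Margulis systems consist of genuine, globally normalized measures; the inner integral $\alpha^p_\phi(y)$ depends only on the point $y$ and on $\phi$; and passing from the transversal $\Fcu_\loc(p)$ to $\Fcu_\loc(q)$ is an $s$-holonomy $h$ with $\alpha^p_\phi=\alpha^q_\phi\circ h$, so the \emph{exact} $s$-invariance of $\{m^{cu}_x\}_{x\in M}$ gives $m_p(\phi)=m_q(\phi)$ on the nose. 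The $cu$-quasi-invariance of the $s$-system is not what makes the boxes compatible: it is used only (via Corollary~\ref{compare}) to show that $\alpha^p_\phi$ is bounded, hence $m^{cu}_p$-integrable, while Lemma~\ref{lem-CuNull}(3) disposes of the exceptional set where $\alpha^p_\phi$ is undefined. Relatedly, the ``correction from quasi-invariance'' in your invariance computation is spurious: $f$ rescales the $s$-conditionals and the $cu$-quotient each by an exact constant along their own leaves, so $m(f(A))=c\,m(A)$ for a single constant $c$ built from $D_u$ and $D_s$, and taking $A=M$ forces $c=1$, which yields simultaneously the invariance of $\mu^{cu\otimes s}$ and the relation $D_u=D_s^{-1}$. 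Your alternative averaging construction would make invariance automatic but would then require identifying the conditionals of the limit, which is precisely the work the direct argument avoids.
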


\begin{proof}
   We first define local measures $\{m_p\}_{p\in M}$. 
For any $p\in M$ we define a small foliated box (for both $\mathcal{F}^s$ and $\mathcal{F}^{cu}$) neighbourhood $U_p$ containing $p$ as follows: Let $\mathcal{F}^s_{loc}(p),\mathcal{F}^{cu}_{loc}(p)$ be  small  neighborhoods of $p$ in $\mathcal{F}^s(p), \mathcal{F}^{cu}(p)$ and $U_p$ be the homeomorphic image of $\mathcal{F}^s_{loc}(p) \times \mathcal{F}^{cu}_{loc}(p) $  by the local product map $(x,y)\mapsto  \mathcal{F}^{cu}_\loc(x)\cap \mathcal{F}^{s}_\loc(y).$ 

For any $z \in U_p$ by $\mathcal{F}^{s}(z, U_p)$ we mean the plaque of $\mathcal{F}^{s}$ passing through $z$ inside $U_p$.  Similarly we define $\mathcal{F}^{cu}(z, U_p).$ 

One can define two projections $\pi^{cu}_p : U_p \rightarrow \mathcal{F}^{cu}_{loc}(p)$ and $\pi^{s}_{p} : U_p \rightarrow \mathcal{F}^s_{loc}(p)$ in a natural way. By definition, for any $y \in \mathcal{F}^{cu}(p, U_p),  \pi^{s}_p$ sends   $\mathcal{F}^s(y, U_p)$ homeomorphically to $\mathcal{F}^s(p, U_p).$ This is a local $cu-$holonomy map. 

We are going to define a measure $m_p$ on $U_p$ by integrating the measures $m^s_y$ with respect to the measure $m^{cu}_p$. We define
 $$
       m_p := \int_{\mathcal{F}^{cu}(p, U_p)} m^s_y \, dm^{cu}_p(y)
 $$
and justify this definition as follows: Fix $\phi$ a continuous function with $\supp\phi\subset U_p.$  Let $\alpha^p_\phi:\mathcal{F}^{cu}(p, U_p)\to\RR$ be defined by 
 
$$
  \alpha^p_\phi(y):= \left\{ \begin{array}{ll} m_y^s (1_{\Fs (y, U_p)} \cdot \phi)  &\text{ if }y\notin\Cyl^s,\\ 0 & \text{ otherwise.}\end{array}\right.
$$

By item~\eqref{item-nucuCsnull} of Lemma~\ref{lem-CuNull}, the set $\Cyl^s\cap\Fcu (p, U_p)$ has zero $m^{cu}_p$-measure.
By Lemma~\ref{usc},  $\alpha^p_\phi$ is defined and measurable.  
Observe that  $$m^s_y(\Fs(y, U_p))\leq C m^s_p(\Fs(p, U_p))$$ for some constant $C$ given by item (3) of Theorem \ref{mainthm-conditionals} and $m^s_p(\Fs(p, U_p))$ is finite by the Radon property of $m^s_p.$ Therefore  the measurable function $\alpha^p_\phi$ is bounded hence integrable for the Radon measure $m^{cu}_p$.
As announced, we get  a well-defined, finite, positive Borel measures for \emph{every} $p\in M$ by setting:
 \begin{equation}\label{eq-loc-mp}
   m_p (\phi) := \int_{\mathcal{F}^{cu}(p, U_p)} \alpha^p_\phi(y)\, d m^{cu} (y).
 \end{equation}

\begin{claim}
The previously defined measures $m_p$, $p\in M$, with support contained in some neighborhood $U_p$  satisfy the following compatibility condition:
 \begin{equation}\label{eq-mcompat}
   \text{If $p,q\in M$ and $\phi\in C(U_p\cap U_q)$, then $m_p(\phi)=m_q(\phi)$.}
\end{equation}
\end{claim}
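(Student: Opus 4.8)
\textbf{Proof strategy for the compatibility claim \eqref{eq-mcompat}.} The plan is to reduce the identity $m_p(\phi)=m_q(\phi)$ for $\phi\in C(U_p\cap U_q)$ to the defining properties of the two Margulis systems: the $s$-invariance of $\{m^{cu}_x\}$ and the $cu$-quasi-invariance of $\{m^s_x\}$, combined with the uniqueness (up to scaling) of Rokhlin disintegrations. First I would observe that $U_p\cap U_q$ is an open set meeting both the local product structures coming from $p$ and from $q$, and that on this overlap both $W^{cu}_\loc$-plaques and $W^s_\loc$-plaques are related by holonomies of small size; by shrinking the neighborhoods we may assume these holonomies are as small as needed so that Corollary~\ref{compare} and Theorem~\ref{thm-Lcu-AC} apply with controlled constants. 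The key point is that $m_p$ and $m_q$ are both built as iterated integrals ``$\int m^s\, dm^{cu}$'' but along slightly different families of plaques; I must show the two recipes give the same measure on the common domain.

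The main steps, in order: (1) Fix $\phi\in C(U_p\cap U_q)$ and, using a partition of unity, reduce to $\phi$ supported in an arbitrarily small product box, so that a single pair of holonomies — the $s$-holonomy $h^{cu}$ between the $cu$-plaque through $p$ and that through $q$, and the $cu$-holonomy between $s$-plaques — suffices. (2) Rewrite $m_q(\phi)$ as an integral over $\Fcu_\loc(q)$; push this integral to $\Fcu_\loc(p)$ via the $s$-holonomy $H$, which is legitimate because $\{m^{cu}_x\}$ is $s$-invariant, so $H_*(m^{cu}_q|\,\cdot\,)=m^{cu}_p|\,\cdot\,$. Under this change of variables the inner integrand $\alpha^q_\phi(y)=m^s_y(1_{\tilde W^s_\loc(y)}\cdot\phi)$ gets transported to an expression involving $m^s_{H(y)}$ evaluated on a $cu$-holonomic image of $1_{\tilde W^s_\loc(y)}\cdot\phi$. (3) Now invoke $cu$-quasi-invariance of $\{m^s_x\}$: the measures $m^s_{H(y)}$ and the $cu$-holonomic push-forward of $m^s_y$ are equivalent, and — this is the delicate part — their Radon--Nikodym derivative, when integrated against $m^{cu}_p$, must cancel against the Jacobian discrepancy introduced in step (2) so that equality, not mere equivalence, holds. (4) Conclude $m_p(\phi)=m_q(\phi)$, noting that continuity of $\phi$ and density of $M^s$ (Lemma~\ref{lem-CuNull}(3)) let us ignore $\Cyl^s$ throughout.

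\emph{The hard part} will be step (3): $cu$-quasi-invariance only gives equivalence of the transversal $s$-conditionals, with a Radon--Nikodym derivative that is not, a priori, identically $1$. To upgrade this to genuine compatibility of the total measures $m_p,m_q$, I expect one must use that the construction \eqref{eq-mp} is symmetric — the ``quotient'' measure $m^{cu}$ and the ``fiber'' measures $m^s$ are defined by a \emph{global} Margulis equivariance ($f$-dilation by $D_u$, resp.\ $D_s$), and the overlap map $U_p\cap U_q$ is subordinate to this structure. Concretely, I would argue that $m_p$ and $m_q$, restricted to $U_p\cap U_q$, have the \emph{same} disintegration along $\Fs$ (namely $\{m^s_y\}$, by construction, using uniqueness of Rokhlin disintegration) \emph{and} the same quotient measure on the space of $s$-plaques (namely the one induced by $m^{cu}$, using $s$-invariance to identify the $cu$-plaque through $p$ with that through $q$); since a measure on a product box is determined by its disintegration and quotient, the two agree. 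This sidesteps having to track the Radon--Nikodym derivative explicitly, but requires carefully checking that ``quotient measure'' is well-defined independent of which $cu$-plaque one uses as transversal — which is exactly the content of $s$-invariance of $\{m^{cu}_x\}$ together with Lemma~\ref{lem-CuNull}(3).
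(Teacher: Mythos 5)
Your main route (steps (2)--(3)) contains a conceptual error: there is no $cu$-holonomy in this computation, and the $cu$-quasi-invariance of $\{m^s_x\}$ is neither needed nor capable of delivering the result. The $s$-holonomy $h:\tilde W^{cu}_\loc(p)\to W^{cu}_\loc(q)$ sends each $y$ to the point $z=h(y)$ lying on the \emph{same} stable leaf as $y$; by property (ii) of a continuous system of measures, $m^s_y$ and $m^s_{z}$ are literally the same Radon measure on that common leaf. Since $\supp\phi\subset U_p\cap U_q$, the cut-offs $1_{\tilde W^s_\loc(y)}$ and $1_{\tilde W^s_\loc(z)}$ are irrelevant beyond the overlap, so $\alpha^p_\phi(y)=\alpha^q_\phi(h(y))$ \emph{exactly} --- no transversal transport of the fiber measures occurs. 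The only change of variables is in the outer integral, where the exact $s$-invariance of $\{m^{cu}_x\}$ gives $h_*(m^{cu}_p|\tilde W^{cu}_\loc(p))=m^{cu}_q|\,\cdot\,$, and the claim follows in three lines. Had the situation really been as you describe in step (3) --- a nontrivial Radon--Nikodym derivative from quasi-invariance needing to ``cancel'' a Jacobian --- the claim would in general fail: quasi-invariance only yields equivalence of $m_p$ and $m_q$, not equality, and nothing in the hypotheses forces the derivative to be $1$.

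Your ``sidestep'' in the final paragraph is, in substance, the paper's actual argument (same stable plaques, hence same conditionals; transversals identified by $s$-holonomy, hence same quotient measure by $s$-invariance), so you did locate the correct mechanism --- but you present it as an escape from a difficulty that does not exist, rather than as the direct proof. One caution if you formalize the disintegration version: Rokhlin conditionals are normalized probabilities, and the plaques $W^s_\loc(x)\cap U_p$ and $W^s_\loc(x)\cap U_q$ need not coincide as sets, so you must restrict both product boxes to $U_p\cap U_q$ before comparing quotient measures; the paper avoids this bookkeeping by working with the un-normalized formula \eqref{eq-loc-mp} directly.
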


To prove this claim, note that, since $U_p$ and $U_q$ are small {(and may be assumed to intersect)}  there exists a well defined $s-$holonomy $h : \pi^{cu}_p(U_p \cap U_q) \rightarrow \pi^{cu}_q(U_p \cap U_p).$

Since $\phi\in C(U_p\cap U_q)$, for any $y \in \pi^{cu}_p(U_p \cap U_q)  $ it follows that
  $$
     \alpha^p_\phi(y)=\alpha^q_\phi(h(y)).
  $$
 Since $\{m^{cu}_x\}_{x\in M}$ is invariant under $s$-holonomy;
 $$\begin{aligned}
     m_p(\phi) &= \int_{\pi^{cu}_p(U_p \cap U_q)} \alpha^p_\phi(y) \, dm^{cu}_p(y)
         = \int_{\pi^{cu}_p(U_p \cap U_q)} \alpha^q_\phi(h(y)) \, dm^{cu}_p(y) \\
        & = \int_{\pi^{cu}_q(U_p \cap U_q)} \alpha^q_\phi(z) \, dm^{cu}_q(z) = m_q(\phi),
   \end{aligned}$$
 proving the claim.
 \medbreak
We now define a finite Borel measure $m$ on $M$ by picking a partition of unity $1=\chi_1+\dots+\chi_r$ subordinated to a finite cover $U_1,\dots,U_r$ determined by points $p_1,\dots,p_r$ and setting:
  $$
     m(\phi) = m_{p_1}(\phi\chi_1)+\dots + m_{p_r}(\phi\chi_r).
  $$
 Finally, we set $\mu^{cu\otimes s} = m(M)^{-1}\cdot m$. Observe it is a Borel probability measure on $M$. It is locally finite, hence finite on the compact set $M$.
 
Observe that $m$ (and $\mu^{cu\otimes s}$) does not depend on the choice of the partition of unity. Indeed, if $1=\chi'_1+\dots+\chi'_{r'}$ is another partition of unity subordinated to some finite cover defined by points $p'_1,\dots,p'_{r'}$, then, for every $\phi\in C(M)$,
 $$\begin{aligned}
   m_{p_1}(\phi\chi_1)+\dots + m_{p_r}(\phi\chi_r) &= \sum_{i=1}^r \sum_{j=1}^{r'} m_{p_i}(\phi\chi_i\chi'_j)
    = \sum_{i=1}^r \sum_{j=1}^{r'} m_{p'_j}(\phi\chi_i\chi'_j) \\
    &= m_{p'_1}(\phi\chi_1)+\dots + m_{p'_r}(\phi\chi_{r'}) .
  \end{aligned}$$
  
The local formula in eq.~\eqref{eq-loc-mp}, valid in any open set $U_p$, implies that the disintegration of $\mu^{cu\otimes s}$ with respect to any partition subordinate to $\Fs$ is given by the Margulis $s$-conditionals $m^s_x$ for a.e. $x\in M$.

To show $f$-invariance, observe that for any  measurable subset $A$ of $U_p\cap f^{-1}(U_q)$ with $p,q\in M$,
 $$\begin{aligned}
   m(f(A)) &= m_q(f(A)) = \int_{\pi^{cu}_q(f(A))} m^s_y(f(A)) \, dm^{cu}_q(y) \\
     &=  D_s \int_{\pi^{cu}_q(f(A))} m^s_{f^{-1}y}(A) \, dm^{cu}_q(y)
     = D_s \int_{\pi^{cu}_p(A)} m^s_x(A) \, d f^{-1}_*(m^{cu}_q)(x) \\
     &= D_sD_u^{-1} m_p(A) = D_sD_u^{-1}m(A).
  \end{aligned}$$
  In the 4th equality above we used that $\pi^{cu}_p(f^{-1}(\pi^{cu}_q(f(A)))) = \pi^{cu}_p(A).$
The same holds for any measurable subset of $M$. The case $A=M$ implies that $D_s=D_u$. Hence $m$ and therefore $\mu^{cu\otimes s}$  are $f$-invariant.
\end{proof}

\subsection{Identification of the conditionals}
We fix some measure of maximal entropy {$\mu$} and we show that its disintegration along $s$- or $u$-conditionals is given by the Margulis systems and its dilation factor is given by the topological entropy.

{
We consider the unstable foliation $\Fu$. We will build an adapted measurable partition $\xi$ satisfying the following properties.
First $\xi$ is subordinated to $\Fu$, i.e., for $\mu$-a.e. $x$,
\begin{itemize}
\item  $\xi(x)$  {is a neighborhood of $x$ in $\cF(x)$ for the intrinsic topology of $\cF(x)$} and with uniformly small diameter,
\item $f(\xi(x))\supset\xi(f(x))$ (i.e., $f\xi$ is less fine than $\xi$, denoted by $f \xi \prec \xi$).
\end{itemize}
Second, $\xi$ is generating, that is,
\begin{itemize}
\item $\bigvee_{i\geq0} f^{-i} \xi$ is the partition into points {(modulo $\mu$).}
\end{itemize}

To find such a partition $\xi$ we use the following construction from \cite[Proposition 3.1 and Lemma 3.2]{YangYang}.
For any finite partition $\mathcal{P}$ 
we denote by $\mathcal{P}^u$ the finer partition obtained by intersecting local plaques of $\mathcal{F}^u$ with elements of $\mathcal{P}.$

\begin{proposition} \label{subordinatedY}
There exists  a finite measurable partition $\mathcal{P} = \{P_i\}$ such that:
\begin{itemize}
 \item  $\mu(\bigcup_i {\rm int}(P_i)) =1$;
 \item each $P_i$ is a contained in a foliation chart of $\mathcal{F}^u$;
 \item  $\bigvee_{i=0}^{\infty} f^i(\mathcal{P}^u)$ is a  measurable generating partition subordinated to $\mathcal{F}^u.$
\end{itemize}  
\end{proposition}

Apply this proposition to get $\mathcal P$ and $\mathcal P^u$ and set  $\xi:=\bigvee_{i=0}^{\infty} f^i(\mathcal{P}^u)$.

Let $\{\mu_{\xi(x)}\}_{x\in M}$ be the  disintegration of $\mu$ w.r.t. $\xi$. The entropy with respect to $\cF$ is defined  \cite{LY85b} as:
 $$
    h(f,\mu,\cF) = - \int \log\mu_{\xi(x)}(f^{-1}\xi(fx))\, d\mu .
 $$
Note that one could replace $\xi$ in the above formula by any generating, measurable partition  subordinate to $\Fu$ \cite{LY85b}.  

  The next result follows from Theorem $C^{'}$ in \cite{LYII} and items (i)-(iii) after the statement of the theorem.}

\begin{proposition}[Ledrappier-Young]
Let $ f \in\Diff^2(M)$ be partially hyperbolic with {strong} unstable foliation $\Fu$. For any  $\mu\in\Proberg(f)$,
 $$
    h(f,\mu,\cF^u) \leq h(f,\mu).
 $$
If all center Lyapunov exponents of $\mu$ are nonpositive, then  the above inequality is an equality.
\end{proposition}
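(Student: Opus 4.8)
The plan is to recall the structure of Ledrappier--Young theory and extract exactly the two pieces of it that the statement needs: the inequality $h(f,\mu,\cF^u)\leq h(f,\mu)$ in general, and the equality when all central exponents are nonpositive. First I would introduce the full Lyapunov spectrum $\lambda_1>\lambda_2>\dots>\lambda_r$ of $\mu$ together with the associated fast unstable subspaces and the intermediate entropies $h_i$ (the entropies ``seen at scale $i$'' along the Pesin unstable lamination), as defined in \cite{LYII}. The main theorem (Theorem $C'$ there, together with the remarks (i)--(iii) immediately following its statement in \emph{loc.\ cit.}) gives the exact formula
\begin{equation*}
  h(f,\mu) = \sum_{i:\,\lambda_i>0} (h_i - h_{i-1})\,\gamma_i
\end{equation*}
where $\gamma_i$ are certain transverse dimensions with $0\le\gamma_i\le 1$, and it also identifies $h(f,\mu,\Fu)$ with $h_{i_0}$ where $i_0$ is the index of the last \emph{strong} unstable exponent (i.e.\ the exponents that are uniformly bounded below by partial hyperbolicity).

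The inequality $h(f,\mu,\Fu)\le h(f,\mu)$ is then essentially formal: the entropy along the strong unstable foliation counts only the contribution of the genuinely expanding ``outer'' exponents $E^u$, while $h(f,\mu)$ counts those contributions \emph{plus} any further contributions coming from positive central exponents, each weighted by $\gamma_i\ge 0$ and each of the increments $h_i-h_{i-1}\ge 0$ being nonnegative. So one writes $h(f,\mu)-h(f,\mu,\Fu)$ as a sum of nonnegative terms indexed by the positive exponents not belonging to $E^u$. Then I would observe that if \emph{all} central Lyapunov exponents of $\mu$ are nonpositive, there are simply no such extra positive exponents: the set $\{i:\lambda_i>0\}$ coincides with the set of exponents in $E^u$, which is exactly the strong unstable foliation. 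Hence every term in that difference vanishes and $h(f,\mu)=h(f,\mu,\Fu)$.

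The one technical point worth spelling out, and the step I expect to be the main obstacle, is verifying that $\Fu$ really is the Pesin unstable lamination associated to the positive part of the spectrum under the hypothesis that central exponents are nonpositive, so that the Ledrappier--Young machinery genuinely applies with $\cF = \Fu$. Here partial hyperbolicity helps: the strong unstable bundle $E^u$ is uniformly expanded, hence always appears as the top block of the Oseledets filtration, and $\Fu$ is its (uniformly contracting for $f^{-1}$, hence absolutely continuous and generating) integral foliation; when no central exponent is positive, the Pesin unstable lamination of $\mu$ coincides $\mu$-a.e.\ with $\Fu$, and the increasing generating partition $\xi$ subordinate to $\Fu$ used to define $h(f,\mu,\Fu)$ is admissible in the sense of \cite{LY85b,LYII}. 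Once this identification is in place, the proposition is just the specialization of Theorem $C'$ of \cite{LYII} to the partially hyperbolic setting, and I would simply cite it, noting that the general inequality is remark (i)--(iii) there and the equality case is the vanishing of all the intermediate increments beyond the uniformly unstable block.
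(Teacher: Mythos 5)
Your argument is correct and is essentially the paper's own: both reduce the statement to Theorem $C'$ of Ledrappier--Young and the items following it, with the equality case coming from the observation that, when all central exponents are nonpositive, the Pesin unstable lamination $\mathcal W^u(x)$ coincides $\mu$-a.e.\ with $\Fu(x)$. (One minor slip: the formula is $h=\sum_{i:\lambda_i>0}\lambda_i\gamma_i$ with $h_i-h_{i-1}=\lambda_i\gamma_i$, not $\sum_i (h_i-h_{i-1})\gamma_i$, but your argument only uses $h_i-h_{i-1}\ge 0$ and the identification of the endpoints, so nothing is affected.)
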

Indeed, as $\mu$ has non-positive center exponents there exists $\lambda > 0$ such that the corresponding measurable foliation 
$$
  {\mathcal W^u(x) :=} \left\{ y \in M : \limsup_{n \rightarrow \infty} \frac{1}{n}   \log d(f^{-n}(x), f^{-n}(y)) < -\lambda\right\}.
$$
coincides a.e.\ with $\Fu$.

Using the above result  we are able to show the following proposition which is an extension of Ledrappier's argument in \cite{L84} (see also \cite{Ledrappier13}).   {The next two results prove Theorem \ref{mainthm-mme} and complete the proof of Theorem \ref{mainthm-conditionals}.}

\begin{proposition}\label{prop-Ledrappier}
Let $f\in\Diff^2(M)$ be a  diffeomorphism with a Margulis $u$-system $\{m^u_x\}_{x\in M\setminus\Cyl^u}$  with dilation factor $D_u>1$ and $\mu^u_x$ fully supported on $\mathcal{F}^u(x)$ for each $x\in M\setminus\Cyl^u$. If $\mu\in\Proberg(f)$ with $\lambda^c(\mu)\leq 0$, then $h(f,\mu)\leq\log D_u$. Moreover, there is equality if and only if the disintegration of $\mu$ along $\Fu$ is given by $m^u_x$, $\mu$-a.e.
\end{proposition}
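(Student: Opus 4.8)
The plan is to run Ledrappier's convexity argument using the Margulis $u$-system as a reference. First I would fix a generating increasing measurable partition $\xi$ subordinated to $\Fu$, and consider the disintegration $\{\mu_{\xi(x)}\}$ of $\mu$. Since the Margulis $u$-conditionals form a continuous system on $\Fu|_{M^u}$ and $\mu(M^u)=1$ (by Theorem~\ref{mainthm-conditionals}, item~\eqref{item-Mu}, as $\mu$ has positive entropy — the case $h(f,\mu)=0$ being trivial), I can normalize $m^u_x$ on each element $\xi(x)$ to a probability measure $\hat m_{\xi(x)}$ and form the Radon--Nikodym derivative $g(x):=\frac{d\mu_{\xi(x)}}{d\hat m_{\xi(x)}}(x)$, well-defined $\mu$-a.e. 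The key computation is that, because the $u$-system has dilation factor $D_u$, the ``information function'' associated to pushing $\xi$ forward satisfies, $\mu$-a.e.,
\[
  -\log \mu_{\xi(x)}\bigl(f^{-1}\xi(fx)\bigr) = \log D_u + \log g(fx) - \log g(x) + \log \hat m_{\xi(x)}\bigl(f^{-1}\xi(fx)\bigr) - (\text{a telescoping reference term}),
\]
where the reference term is exactly what one gets by running the same identity with $\hat m$ in place of $\mu$; since $f^{-1}$ expands the $m^u$-mass of $\xi$-elements by $D_u$, that reference term contributes $\log D_u$ and the $g$-terms telescope out under integration against the invariant measure $\mu$.

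More precisely, I would integrate and use $\int \log g\circ f\, d\mu = \int \log g\, d\mu$ (valid once one checks $\log g\in L^1(\mu)$, which follows from the continuity/compactness bounds on $m^u_x$ together with finiteness of $h(f,\mu)$ via the standard Brin--Katok / Shannon--McMillan--Breiman control), to obtain
\[
  h(f,\mu,\Fu) = \log D_u - \int \log \frac{g(x)}{\mathbb E_\mu[g\mid f^{-1}\mathcal B_\xi](x)}\, d\mu(x) + (\text{error terms that vanish}),
\]
and then apply Jensen's inequality to the convex function $-\log$ on each fiber: the conditional-expectation term gives $\int (-\log \mathbb E[\cdots])\,d\mu \le \int \mathbb E[-\log(\cdots)]\,d\mu$ with equality iff $g$ is measurable with respect to the coarser $\sigma$-algebra $f^{-1}\mathcal B_\xi$ up to $\mu$-null sets. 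Combining this with the Ledrappier--Young inequality $h(f,\mu,\Fu)\le h(f,\mu)$ — which, crucially, is an \emph{equality} here because $\lambda^c(\mu)\le0$ forces the Pesin unstable foliation $\mathcal W^u$ to coincide a.e.\ with $\Fu$, as noted just before the statement — yields $h(f,\mu)=h(f,\mu,\Fu)\le \log D_u$.

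For the equality case: $h(f,\mu)=\log D_u$ forces equality in the Jensen step, hence $g$ is $f^{-1}\mathcal B_\xi$-measurable mod $\mu$, which by iterating (intersecting over $n\ge0$ the $\sigma$-algebras $f^{-n}\mathcal B_\xi$, which decrease to the $\sigma$-algebra of $\Fu$-saturated sets since $\xi$ is generating) forces $g$ to be $\mu$-a.e.\ constant on $\Fu$-leaves. Combined with $\int g\, d\hat m_{\xi(x)} = 1$ on $\mu$-a.e.\ fiber, this gives $g\equiv 1$ $\mu$-a.e., i.e.\ $\mu_{\xi(x)}=\hat m_{\xi(x)}$; since $\xi$ can be chosen arbitrarily fine and subordinate to $\Fu$, this means the disintegration of $\mu$ along $\Fu$ is proportional to $\{m^u_x\}$ a.e. Conversely, if the disintegration of $\mu$ is the Margulis system, then $g\equiv 1$, the Jensen step is an equality, and reversing the computation gives $h(f,\mu,\Fu)=\log D_u$, hence $h(f,\mu)=\log D_u$ by the equality in Ledrappier--Young.

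\textbf{Main obstacle.} The delicate point is the bookkeeping of the reference/telescoping terms and the integrability of $\log g$: one must verify that the information function $-\log\mu_{\xi(x)}(f^{-1}\xi(fx))$ is genuinely in $L^1(\mu)$ and that the Radon--Nikodym derivative $g$ has $\log g\in L^1(\mu)$, so that all the telescoping cancellations are legitimate and the Jensen inequality applies fiberwise. This requires combining the uniform local bounds on $m^u_x$ from the continuity of the system (Proposition~\ref{proposition-functional}(a) and the full-support/atomless properties) with the entropy-theoretic control coming from $h(f,\mu)<\infty$; everything else is a faithful transcription of Ledrappier's argument with the Margulis $u$-conditionals playing the role of Lebesgue measure on unstable leaves.
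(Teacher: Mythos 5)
Your overall strategy is the right one --- Ledrappier's convexity argument with the Margulis $u$-conditionals as reference measures, combined with the Ledrappier--Young equality $h(f,\mu)=h(f,\mu,\Fu)$ when $\lambda^c(\mu)\le 0$ --- but the way you set it up contains a genuine gap. You define $g(x)=\frac{d\mu_{\xi(x)}}{d\hat m_{\xi(x)}}(x)$ and build the whole computation on this fiberwise Radon--Nikodym derivative. But there is no reason for $\mu_{\xi(x)}$ to be absolutely continuous with respect to the normalized Margulis conditional: that absolute continuity (indeed, equality) is precisely the conclusion of the equality case, and for a measure with $h(f,\mu)<\log D_u$ the conditionals will typically be singular with respect to $m^u_x$. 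So $g$ need not exist, the ``identity'' you write for the information function is ill-defined, and the telescoping and fiberwise Jensen steps built on it cannot be carried out. The fix --- and the route the paper takes --- is to compare only the \emph{masses on the countable partition} $f^{-1}\xi$ restricted to each atom $\xi(x)$: set $m_x(A)=m^u_x(A\cap\xi(x))/m^u_x(\xi(x))$ and compare $-\int\log\mu_{\xi(x)}((f^{-1}\xi)(x))\,d\mu$ with $-\int\log m_x((f^{-1}\xi)(x))\,d\mu$ by applying Jensen (Gibbs' inequality) to the two discrete probability vectors on the atoms of $f^{-1}\xi$ inside $\xi(x)$; no density between the conditional measures is required, and the equality case of Jensen directly yields $\mu_{\xi(x)}((f^{-1}\xi)(x))=m_x((f^{-1}\xi)(x))$ a.e., which one then iterates with $\xi$ replaced by $f^{-n}\xi$ and passes to the limit using that $\xi$ is generating.

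A second, related soft spot is the integrability issue you flag as the ``main obstacle''. The dilation identity gives $-\log m_x((f^{-1}\xi)(x))=\log D_u + h(fx)-h(x)$ with $h(x)=-\log m^u_x(\xi(x))$, and $h$ is not known to lie in $L^1(\mu)$, so one cannot simply telescope under the integral sign; and $\log g$ is not the right object since $g$ need not exist. The paper handles this by noting that the function $-\log m_x((f^{-1}\xi)(x))$ is nonnegative, that its Birkhoff averages converge in probability to the constant $\log D_u$ (because $h(f^nx)/n\to0$ in probability for any fixed measurable $h$), and that a nonnegative function whose Birkhoff averages converge a.e.\ and in probability to a finite constant is integrable with that constant as its integral. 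Two smaller points: your reduction to the positive-entropy case is fine for the inequality (zero entropy is trivial since $D_u>1$), but you should justify $\mu(\Cyl^u)=0$ as the paper does via Lemma~\ref{lem-CuNull}(2) rather than by citing the theorem being proved; and your description of the $\sigma$-algebras $f^{-n}\mathcal B_\xi$ as ``decreasing to the $\Fu$-saturated sets'' is backwards --- since $\xi$ is increasing and generating, the partitions $f^{-n}\xi$ refine to points on unstable plaques as $n\to+\infty$, which is exactly what identifies the conditionals in the limit.
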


\begin{proof}
First suppose that $\mu$ gives full measure to $\mathcal{C}^u$. {Item~\eqref{item-muCsnull} of Lemma~\ref{lem-CuNull} implies that $h(f, \mu)=0$ so $h(f, \mu)< \log D_u$.  }
Now suppose that $\mathcal{C}^u$ has zero $\mu$-measure. {Following Ledrappier, we will compare the conditional measures $\mu_{\xi(x)}(f^{-1}\xi(fx))$ that appear in the unstable entropy with similarly defined quantities from the unstable Margulis system.}

Define a normalized family of measures adapted to the partition $\xi$, for $\mu$-a.e. $x\in M$,
$$
 m_x (A) := \frac{m_x^u (A \cap \xi(x))}{ m^u_x(\xi(x))}.
$$ Observe that the above ratio is well defined, as $m_x^u$ is fully supported and $\xi(x)$ is a neighbordhood of $x$.
The dilation property of Margulis measures yields: 
$$
m_x((f^{-1}\xi)(x)) = D_u^{-1} \frac{m^u_{f(x)} (\xi( fx)) }{m^u_x(\xi(x))}.
$$ 
This can be expressed in terms of the function
 $$
   g(x):=-\log m_x((f^{-1}\xi)(x)) \geq0.
 $$ 

{
\begin{lemma}
 The map
$$ x \mapsto m^u_x(\xi(x))$$ is measurable.
 Consequently the above function $g$ 
 is measurable. 
 \end{lemma}
\begin{proof}
A partition by foliated boxes is a finite partition $\mathcal P$ modulo $\mu$ such that each $A\in\mathcal P$ has the form $(-1,+1)^d$ for some $u$-foliation chart $\chi_A$.

We work with a finer partition $\mathcal P^u$ which is measurable  and  obtained from foliated box partition $\mathcal P$, where
 $$
     \mathcal P^u :=\{ \chi_A((-1,1)^{d_u}\times\{t\}):t\in(-1,+1)^{d-d_u},{A\in\mathcal P}\}.
   $$

For $n \in \mathbb{N}$ define
 $$
    \mathcal{P}^u_n := \bigvee_{1\leq k\leq n} f^k(\mathcal{P}^u) \text{ and }
    \mathcal{P}_n := \bigvee_{1\leq k\leq n} f^k(\mathcal{P}).
 $$ 
Note that $\mathcal{P}^u_n  \nearrow \xi.$

Let $m^{u, \mathcal{P}}_x$ be the restriction (not normalized) of $m^u_x$ to the atom $\mathcal{P}^u(x)$ of the partition $\mathcal{P}^u.$ Observe that $m^{u, \mathcal{P}}_x$ may be seen as a measure defined on $M.$ Since $\mathcal P^u(x)$ is bounded hence relatively compact in the intrinsic topology of $\Fu(x)$, its $m_x^u$-measure is finite.
As $\xi(x) \subset \mathcal{P}^u(x)$ and $\mathcal{P}^u_n(x) \searrow \xi(x)$, we get: $$ m^u_x (\xi(x)) = m^{u, \mathcal{P}}_x(\xi(x)) = \lim_{n \rightarrow \infty} m^{u, \mathcal{P}}_x (\mathcal{P}^u_n (x)). $$

To conclude, it is enough to see that for each $n\geq1$,
 the map $x \mapsto m^{u, \mathcal{P}}_x (\mathcal{P}^u_n (x))$ is measurable. But Lemma \ref{usc} shows that $x\mapsto m^{u, \mathcal{P}}_x (A)$ is measurable for each element $A$ of the finite partition $\mathcal P_n$.
\end{proof} 
}

By the pointwise ergodic theorem, we know $\lim_n \frac1nS_ng(x)$ (possibly $+\infty$) exists almost everywhere.  To identify this limit,  observe that it is also a limit in probability. Taking the logarithm of the previous identity, we see that $g(x)=h(fx)-h(x)+\log D_u$ for a measurable function $h$. Therefore the limit in probability and therefore almost everywhere is the constant  $\log D_u$. Thus $g$ is integrable with:
 $$
- \int \log m_x((f^{-1} \xi)( x)) d \mu =  \log D_u.
 $$
Now recall that $ h(f,\mu,\mathcal{F}^u) = - \displaystyle\int \log\mu_{\xi(x)}((f^{-1}\xi) (x))\, d\mu $ and so,
$$
- \int \log\mu_{\xi(x)}((f^{-1}\xi)(x))\, d\mu \leq - \int \log m_x((f^{-1}\xi)(x)) d\mu = \log(D_u).
$$

The inequality comes from {Jensen's inequality and the (strict) concavity of the logarithm.
The case of equality for Jensen's inequality yields that this is an equality if and only if $\mu_{\xi(x)}((f^{-1}\xi)(x)) = m_x((f^{-1}\xi)(x))$ for $\mu$-a.e. $x\in M$. Replacing $\xi$ by $f^{-n}\xi$, we obtain that
 $$
      \mu_{f^{-n}\xi(x)}(f^{-n-1}\xi)(x)) = \frac{m^u_x((f^{-n-1}\xi)(x))}{m^u_x((f^{-n}\xi)(x))}
  $$
so
  $$\begin{aligned}
    \mu_{\xi(x)}((f^{-n-1}\xi)(x)) &= \prod_{k=0}^n \frac{\mu_{\xi(x)}((f^{-k-1}\xi)(x))}{\mu_{\xi(x)}((f^{-k}\xi)(x))}
    =
    \prod_{k=0}^n \mu_{(f^{-k}\xi)(x)}((f^{-k-1}\xi)(x))\\
     &
     =\prod_{k=0}^n \frac{m^u_x((f^{-k-1}\xi)(x))}{m^u_x((f^{-k}\xi)(x))}    
    =\frac{m^u_x((f^{-n-1}\xi)(x))}{m^u_x(\xi(x))}.
  \end{aligned}$$
Since $\xi$ is generating and increasing,  the disintegration of $\mu$ along $\xi$ is given by the Margulis $u$-conditionals as claimed.}
\end{proof}

The next result identifies the dilation factor {and proves the Addendum~\ref{addendum}.}

{\begin{proposition}\label{prop:dilationentropy}
Let $f\in\Diff^2(M)$ be a flow-type diffeomorphism  with minimal strong foliations
and $D_u=D^{-1}_s$ as in Proposition \ref{prop-LPS}, then $D_u = \exp(h_{top}(\phi)). $
\end{proposition}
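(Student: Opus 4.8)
The plan is to prove the two inequalities $h_\top(f)\le\log D_u$ and $h_\top(f)\ge\log D_u$ and then combine them. Throughout we use that $f^{-1}$ is again a flow type diffeomorphism with minimal strong foliations (after reversing the orientation of $\Fc$), that its strong unstable foliation is $\Fs$, and that, by Theorem~\ref{mainthm-conditionals} applied to $f^{-1}$ together with Proposition~\ref{prop-LPS}, it carries a Margulis $u$-system $\{m^s_x\}_{x\in M^s}$ whose dilation factor equals $D_s^{-1}=D_u>1$. We also use the elementary facts that $h(f^{-1},\mu)=h(f,\mu)$ and $\lambda^c_{f^{-1}}(\mu)=-\lambda^c(\mu)$ for every $f$-invariant $\mu$.

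\emph{Upper bound.} Let $\mu\in\Proberg(f)$. If $\lambda^c(\mu)\le0$, Proposition~\ref{prop-Ledrappier} applied to $f$ (with the Margulis $u$-system $\{m^u_x\}$ of dilation factor $D_u$) gives $h(f,\mu)\le\log D_u$. If $\lambda^c(\mu)\ge0$, then $\lambda^c_{f^{-1}}(\mu)\le0$, so Proposition~\ref{prop-Ledrappier} applied to $f^{-1}$ (with the Margulis $u$-system $\{m^s_x\}$ of dilation factor $D_s^{-1}=D_u$) gives $h(f,\mu)=h(f^{-1},\mu)\le\log D_u$. Since every ergodic measure falls into at least one of the two cases, the ergodic decomposition and the variational principle yield $h_\top(f)\le\log D_u$.

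\emph{Lower bound.} Consider the invariant probability measure $\mu^{cu\otimes s}$ provided by Proposition~\ref{prop-LPS}: it gives full measure to $M\setminus\Cyl^s$ and its disintegration along $\Fs$ coincides with the stable Margulis conditionals $\{m^s_x\}$. Viewing $\Fs$ as the unstable foliation of $f^{-1}$, the computation carried out inside the proof of Proposition~\ref{prop-Ledrappier} — which only uses the dilation property of the Margulis conditionals together with the pointwise ergodic theorem, hence is valid for \emph{any} invariant measure not charging the exceptional set — shows that $h(f^{-1},\mu^{cu\otimes s},\Fs)=\log D_u$. The Ledrappier--Young inequality applied to $f^{-1}$ then gives
 $$
   \log D_u = h(f^{-1},\mu^{cu\otimes s},\Fs)\le h(f^{-1},\mu^{cu\otimes s}) = h(f,\mu^{cu\otimes s})\le h_\top(f),
 $$
so $h_\top(f)\ge\log D_u$. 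Together with the upper bound this proves $D_u=\exp h_\top(f)$.

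The step that requires the most care is justifying that the $f^{-1}$-analogue of the key identity in the proof of Proposition~\ref{prop-Ledrappier}, namely $-\int\log m^s_x((f\xi)(x))\,d\mu^{cu\otimes s}=\log D_u$ (with $m^s_x$ the stable Margulis conditional normalized on $\xi(x)$), genuinely applies here. One checks: that $\mu^{cu\otimes s}(\Cyl^s)=0$, which follows from Lemma~\ref{lem-CuNull}(3) together with the construction of $\mu^{cu\otimes s}$ as an integral of the $m^s_y$ against $m^{cu}_p$ along $\Fcu_\loc(p)$ and the fact that $\Cyl^s$ is $\Fs$-saturated; that the normalizing constant $m^s_x(\xi(x))$ lies in $(0,\infty)$ for $\mu^{cu\otimes s}$-a.e.\ $x$, which follows from the full support and local finiteness of the $m^s_x$; and that the identification of the Birkhoff average in that computation does not require ergodicity of $\mu^{cu\otimes s}$ — it does not, since $\tfrac1n\,h\circ f^n\to0$ in probability for any measurable function $h$ and any invariant measure. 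Once these points are in place the argument is essentially formal.
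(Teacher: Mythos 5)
Your proof is correct and follows essentially the same route as the paper: the upper bound via Proposition~\ref{prop-Ledrappier} applied to $f$ and to $f^{-1}$ together with the variational principle, and the lower bound by computing the partial entropy of $\mu^{cu\otimes s}$ along $\Fs$ (the unstable foliation of $f^{-1}$) and invoking the Ledrappier--Young inequality. Your additional verification that the coboundary/Birkhoff computation in Proposition~\ref{prop-Ledrappier} does not require ergodicity of $\mu^{cu\otimes s}$ is a point the paper passes over silently, and it is handled correctly.
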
}

\begin{proof}
Let $\mu$ be an ergodic measure for $f$.  If $\lambda^c(\mu)\leq 0$.  Then by Proposition \ref{prop-Ledrappier} we know that $h(f, \mu)\leq \log D_u$.  On the other hand, if $\lambda^c(\mu)>0$, then for $f^{-1}$ we see that {$h(f, \mu)=h(f^{-1}, \mu)\leq \log D_s^{-1}=\log D_u$.}  Hence, for any ergodic measure we have $h(f, \mu)\leq \log D_u$ and by the Variational Principle we have $h_\mathrm\top(f)\leq \log D_u.$

To finish the proof, it is enough to show that, $\mu^{cu \otimes s} $ being the measure constructed in Proposition \ref{prop-LPS}, 
$$h(\mu^{cu \otimes s} ) \geq \log(D^u).$$  
Since the conditional measures of  $\mu^{cu \otimes s}$ along $\mathcal{F}^s$ are given by the Margulis system $\{m^s\}_{x\in M\setminus\Cyl^s}$, the previous proposition shows that:
$$
 h(\mu^{cu \otimes s} , f^{-1}, \mathcal{F}^s) = - \log(D_s).
$$
Note that $\mathcal{F}^s$ is the strong unstable foliation of $f^{-1}.$ By Proposition~\ref{prop-Ledrappier},
 $$
  h(\mu^{cu \otimes s} , f) = h(\mu^{cu \otimes s} , f^{-1}) \geq -\log(D_s) = \log(D_u).
 $$
\end{proof}

{
\begin{proof}[Proof of Theorem~\ref{mainthm-mme}]
Let $f$ be a $C^2$ diffeomorphism of a compact manifold $M$. Assume that $f$ is a flow-type diffeomorphism and minimal strong foliations. The dilation factor $D_u$ is greater than $1$ by Lemma~\ref{lem-Dbigger}. By Proposition~\ref{prop:dilationentropy}, $D_u=D_s^{-1}=\exp h_\top(f)$.

Therefore Propostion~\ref{prop-Ledrappier} shows that any ergodic MME $\mu$ with a nonpositive central exponent has unstable conditionals given by the unstable Margulis system $\{m^u_x\}_{x\in M ^u}$.

{Finally we prove that $\mu$ has full support. Observe that by Proposition  \ref{prop-Ledrappier}  the conditional measures of $\mu$ with respect to a subordinated partition $\xi$ are equal to (normalized restrictions of the) Margulis measures to the atom of $\xi$. So, there exists an atom of partition such that almost every (with respect to normalized Margulis measure) point  is contained in the support of $\mu.$ By construction, each atom of $\xi$ contains an open set inside unstable leaf. As Margulis measures are fully supported we get an open set (inside unstable leaf) totally contained in the support of $\mu$. By invariance of  support and minimality of unstable foliation we get $\supp(\mu)=M.$}
\end{proof}
}

{
\subsection{Hyperbolic MMEs}
In this section we assume the existence of some hyperbolic MME. We build its twin measure, which is a MME with opposite central exponent. We get a uniqueness result from a version of Hopf's argument.
}

\begin{proof}[Proof of Proposition \ref{mainthm-twin}]
By assumption (1), the exponent along $\Fc(x)$ is negative for almost every $x$. {
Let $\mathcal W^c(x):=\{y\in\Fc(x):\lim_{n\to\infty} d(f^nx,f^ny)=0\}$. Since the central foliation is one-dimensional, it is not difficult to see that $\mathcal W^c(x)$ is an open arc that contains, for $\mu$-a.e. $x\in M$, a neighborhood of $x$ in its central leaf. Moreover,  the $C^1$ submanifold $\mathcal W^c(x)$ depends measurably on $x$. These two facts are immediate consequences of 
 \cite[Theorem 3.11]{ABC}  which deals more generally with stable manifolds in the $C^1$ setting with dominated splitting).}
By item~(2), this curve is bounded. Thus, the following is well-defined $\mu$-almost everywhere (recall that $\Fc(x)$ is oriented):
 $$
    \beta: M \to M,\quad x\longmapsto \sup \mathcal W^c(x).
 $$
Note that {$\beta(x)$ must be one of the endpoints of $W^c(x)$. Moreover,} $\beta$ is measurable  and satisfies $\beta\circ f=f\circ\beta$ and $\beta(x)\in\Fc(x)$ for all $x\in M$.

\begin{claim}\label{cla.twin}
There is a measurable subset $Z\subset M$ with $\mu(Z)=1$ such that the restriction $\beta|Z$ is injective. 
\end{claim}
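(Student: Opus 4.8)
The plan is to show that the map $\beta$ collapses at most countably many points onto each of its image points, after which removing a suitable invariant null set will yield injectivity. First I would observe that $\beta(x)\in\Fc(x)$ for all $x$ in the full-measure set where $\beta$ is defined, so if $\beta(x)=\beta(x')$ with $x\neq x'$ then necessarily $\Fc(x)=\Fc(x')$, i.e., $x$ and $x'$ lie on a common center leaf and $x'\in\mathcal W^c(x)\cup\{x\}$ or vice versa (since the two bounded Pesin center-stable curves share the same right endpoint, one contains the other). Thus for each $y\in M$ the fiber $\beta^{-1}(y)$ is contained in a single center leaf $\Fc(y)$, and moreover each point of $\beta^{-1}(y)$ is a left endpoint of the curve $\mathcal W^c(\cdot)$ terminating at $y$. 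Two distinct points $x_1<x_2$ of $\beta^{-1}(y)$ satisfy $\mathcal W^c(x_1)\supsetneq[x_2,y)_c\ni x_2$, and since $x_2\in\mathcal W^c(x_1)$ one has $d(f^nx_1,f^nx_2)\to0$ exponentially; conversely $x_1\notin\mathcal W^c(x_2)$ gives no constraint, but the key structural point is that the fibers are \emph{order-isolated}: if $x_1<x_2$ are in the same fiber then $x_1\notin\overline{\mathcal W^c(x_2)}$, yet any point strictly between two fiber points would itself have to map to $y$.

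The main step is then a Poincaré-recurrence-type argument to rule out uncountable fibers on a full-measure set. I would argue as follows. Suppose $\mu(\{x:\#\beta^{-1}(\beta(x))\geq 2\})>0$. Using that $\beta$ is measurable and $\beta\circ f=f\circ\beta$, the set $B=\{x:\#\beta^{-1}(\beta(x))\geq2\}$ is invariant, hence has measure one by ergodicity. For each $x\in B$ pick the ``next'' fiber point, say $x^+=\min\{x'\in\beta^{-1}(\beta(x)): x'>x\}$ if it exists (and symmetrically $x^-$); the map $x\mapsto x^+$ is measurable, commutes with $f$, and displaces $x$ along $\Fc(x)$ by a positive amount $t(x):=d_c(x,x^+)>0$. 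Since $x^+\in\mathcal W^c(x)$, the displacement contracts: $d_c(f^nx,f^nx^+)\to0$. On the other hand, by flow-type-style bounds (the center displacement $[x,f(x))_c$ has length bounded above and below, as used repeatedly in Section~\ref{s-Margulis}), iterating cannot make a fixed positive center distance shrink to zero while remaining a genuine ``gap between consecutive fiber points'' — more precisely, applying the pointwise ergodic theorem to $\log t$ or a recurrence argument to the positive function $x\mapsto\min(t(x),1)$ forces $t(f^nx)$ to return near $t(x)$ infinitely often, contradicting $t(f^nx)\le d_c(f^nx,f^nx^+)\to0$. Hence $\mu(B)=0$, so on the full-measure invariant set $Z:=M\setminus B$ (intersected with the domain of $\beta$) the map $\beta$ is injective.

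The hard part is making the collapsing-to-zero contradiction fully rigorous: one must be careful that ``consecutive fiber points'' is a measurable selection and that the gap $t(x)$ is genuinely bounded below along orbits, or else replace this by a cleaner argument — e.g., noting that $x\mapsto(\beta(x),\,$position of $x$ in $\beta^{-1}(\beta(x)))$ would exhibit $\mu$ as having an atomic-type factor incompatible with $\lambda_c(\mu)<0$ and the center leaf being non-compact (assumption (3)). I expect the referee-proof route is: show $\beta$ restricted to a full-measure set is \emph{countable-to-one}, invoke that a measurable countable-to-one factor map between standard probability spaces is injective on a co-null set after discarding the (countable, hence null under a non-atomic class of conditionals) over-counted fibers, using assumptions (2)–(3) to guarantee atomlessness of $\mu$ along center leaves. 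I would present the countable-to-one statement as the core lemma and derive injectivity on $Z$ as a soft measure-theoretic consequence.
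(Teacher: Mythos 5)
Your proposal rests on a structural claim about $\beta$ that is false, and this sinks the whole argument. The relation $y\in\mathcal W^c(x)$ is symmetric and transitive, so the sets $\mathcal W^c(x)$ are equivalence classes; they are open subintervals of the (noncompact, oriented) leaf $\Fc(x)$, and $\beta(x)=\sup\mathcal W^c(x)$ depends only on the class. Consequently $\beta$ is \emph{constant on each curve} $\mathcal W^c(x)$, and the fiber $\beta^{-1}(\beta(x))$ is exactly that curve: an uncountable interval, not a discrete or countable set. Your assertions that $x_1\notin\mathcal W^c(x_2)$ for two fiber points, that fibers are ``order-isolated,'' and that one can pick a ``next fiber point'' $x^+$ with $t(x)=d_c(x,x^+)>0$ are all contradicted by this; $t(x)$ would be $0$. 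Your fallback is also not sound on two counts: a measurable countable-to-one factor map commuting with an ergodic transformation need not be injective off a null set (a two-point extension is a counterexample), and the relevant conditionals of $\mu$ along $\Fc$ are not atomless on these curves --- quite the opposite, as it turns out.

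The correct mechanism, and the one the paper uses, is measure-theoretic rather than set-theoretic: one disintegrates $\mu$ along $\Fc$ into leafwise Radon measures $\mu^c_x$ (normalized invariantly using $x\mapsto f(x)$) and proves that $\mu^c_x$ restricted to $\mathcal W^c(x)$ is a single atom at $x$, i.e., eq.~\eqref{eq-concentrate}. This is where a recurrence argument genuinely enters, but applied to conditional mass, not to gaps between fiber points: the sets $\widehat{\mathcal W}^c_\eps(f^{-n}x)$ carry almost all of the conditional mass of the curve for a positive-measure set of times, and $f^n$ contracts them into the $\eps$-neighborhood of $x$, forcing concentration at $x$. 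Taking $Z$ to be the full-measure set where this atomicity holds, two points $x,y\in Z$ with $\beta(x)=\beta(y)$ lie on the same leaf with $\mathcal W^c(x)=\mathcal W^c(y)$, so the same conditional measure is simultaneously a point mass at $x$ and at $y$, whence $x=y$. Your instinct to use recurrence and the contraction along $\mathcal W^c$ is pointed in the right direction, but it must be brought to bear on the conditional measures; as written, the proof does not go through.
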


We defer the proof of the claim and first deduce the proposition from it.   Let $\nu:=\beta_*(\mu)$. Note that the claim  implies that $\beta$ is a measure-preserving conjugacy between $(f,\mu)$ and $(f,\nu)$. If $\lambda_c(\nu)$ was negative, {for $\mu$-a.e. $x\in M$, $\beta(x)$  would be contained in the open $\mathcal W^c(\beta(x))$  contradicting that $\beta(x)\in\partial\mathcal W_c(x)$.} Thus $\lambda_c(\nu)\geq0$ and the proposition is established. \end{proof}

To prepare the proof of the claim, recall from Section~\ref{s-disint} that one can find a measurable disintegration  of $\mu$ along the foliation $\Fc$ into  Radon measures $\{\bar\mu^c_x\}_{x\in M}$ (well-defined up to a  factor, possibly depending on the leaf). {Note that $\mu$ is atomless since otherwise it would be carried by a periodic orbit hence by compact center leaves, in contradiction to assumption (3). Therefore} $f(x)\in\Fc(x)$ with $x\ne f(x)$ for $\mu$-a.e. $x$ and one can define Radon measures $\{\mu^c_x\}_{x\in M}$  by using the invariant normalization $\mu^c_x([y,f(y))_c)=1$ for $\mu$-a.e.\ $x\in M$ and for all $y\in\Fc(x)$. We first show:
 \begin{equation}\label{eq-concentrate}
   \text{for $\mu$-a.e. }x\in M\quad \mu^c_x|\mathcal W^c(x)=c_x\delta_{x} \quad
   \text{for some }c_x>0.
 \end{equation}

For each $\eps>0$, let {$\widehat{\mathcal W}^c_\eps(x):=\{y\in\mathcal W^c(x):d_c(y,\partial\mathcal W^c(x))>\eps\}$ where $d_c(\cdot, \cdot)$ is the induced distance on the center foliation}. 
Note that for any $\delta>0$, there are a set $S$ of positive $\mu$-measure of points $x$ and some arbitrarily small $\eps>0$ such that 
 $$
  \forall x\in S\quad  \mu^c_x(\widehat{\mathcal W}^c_\eps(x))\geq (1-\delta)\mu^c_x(\mathcal W^c(x)).
  $$   
The last measure is positive for $\mu$-a.e.\ $x$. Now, for $\mu$-a.e.\ $x$, for all large $n\geq0$, 
 $$
   f^n(\widehat{\mathcal W}^c_\eps(f^{-n}x))\subset \{y\in \mathcal W^c(x): d_c(y,x)<\eps\}.
 $$
The ergodicity of $\mu$ implies that, for $\mu$-a.e. $x$, $f^{-n}x\in S$ for some arbitrarily large integers $n$. Hence, by invariance of $\mu$:
 $$\begin{aligned}
    \mu^c_x\left(\{y\in \mathcal W^c(x): d_c(y,x)<\eps\}\right) 
       &\geq \mu^c_{f^{-n}x}(\widehat{\mathcal W}^c_\eps(f^{-n}x)) \\
       &\geq (1-\delta) \mu^c_{f^{-n}x}(\mathcal W^c(f^{-n}x))
    = (1-\delta) \mu^c_{x}(\mathcal W^c(x)).
   \end{aligned}$$
 Since $\eps,\delta>0$ were arbitrarily small, eq.~\eqref{eq-concentrate} follows.
 
 \smallbreak
 
\begin{proof}[Proof of Claim \ref{cla.twin}] To prove the claim, let $Z$ be the set of $x\in M$ for which  $\mu^c_x$ satisfies eq.~\eqref{eq-concentrate}. This is a measurable set with full measure $\mu$. Now, let $x,y\in Z$.  If $\beta(x)=\beta(y)$ then $\mathcal W^c(x)=\mathcal W^c(y)$. In particular $\mu^c_x|\mathcal W^c(x)=\mu^c_y|\mathcal W^c(y)$. By eq.~\eqref{eq-concentrate}, this implies $x=y$, concluding the proof of the claim. 
\end{proof}
 
{We now turn to the Hopf argument.}

\begin{proof}[Proof of Proposition \ref{mainthm-hopf}]
{Let $\mu,\nu$ be ergodic MME with $\lambda_c(\mu)<0$ and $\lambda_c(\nu)\leq 0$.} 
Proposition \ref{prop-Ledrappier} implies that their conditional measures along unstable foliation  {are both given by the $u$-Margulis system $\{m^u_x\}_{x\in M\setminus\Cyl^u}$.} 
  Let {$B_{\mu}:=\{x\in M: \tfrac1n\sum_{k=0}^{n-1}\delta_{f^kx}\rightharpoonup\mu\}$} be the ergodic basin of $\mu$ (the convergence is in the weak star topology as $n\to+\infty$). By ergodicity $\mu(M\setminus B_{\mu}) =0$  and so $m_x^u (M\setminus B_{\mu}) =0$ for $\mu-$a.e $x$. Similarly, letting $B_\nu$ be the ergodic basin of $\nu$, 
  for $\nu-$a.e $y$ we have $m^u_y(M\setminus B_{\nu}) =0.$   

 As the center Lyapunov exponent of $\mu$ is negative, {for $\mu$-a.e. $x \in B_{\mu}$, $m^{u}_x(M\setminus  B_{\mu})=0$ and there is  a subset $ K \subset \mathcal{F}^u(x)\cap B_\mu$ with  $m^u_x(K) >0$ and such that the size of the Pesin local stable manifolds $\mathcal W^s_{loc}(z)$ of points $z\in K$ is uniformly bounded from below.} 

Pick $y \in B_{\nu}$ with $m^u_y(M\setminus B_{\nu})=0$. The density of $\mathcal{F}^u(y)$ implies that {there is a local $cs$-holonomy $h:K\to\Fu(x)$  (perhaps after replacing $K$ with a smaller subset). This holonomy is absolutely continuous from $(K,m^u_x)$ to $(\Fu(x),m^u_y)$, hence:
$$
 m^u_y\left(h(K)\right) > 0.
$$ 
{By the choice of $K$, $h(z)$ belongs to the Pesin stable manifold of $z$.}
Since the ergodic basin is saturated by stable manifolds, $h(K)\subset  B_{\mu}$} and therefore  $m^u_y(B_{\mu})>0.$ As  $m^u_y(M\setminus B_{\nu}) = 0$,  we conclude that $B_{\nu} \cap B_{\mu} \neq \emptyset$ and consequently $\mu=\nu.$
\end{proof}

\begin{remark}\label{rem-unique}
Let $f$ be a flow-type $C^2$ diffeomorphism with minimal strong foliations. Whenever $\mu$ is an ergodic MME with $\lambda_c(\mu)<0$, it is the only ergodic invariant probability measure whose $u$-conditionals are given by an unstable Margulis system.
\end{remark}

\begin{proof}[Proof of Theorem \ref{mainthm-dichotomy}]
{Let $f$ be a  flow-type $C^2$ diffeomorphism and minimal strong foliations. Observe that since $f$ is partially hyperbolic with a one-dimensional center, there must exist an ergodic MME $\mu$ \cite{CowiesonYoung,DFPV, LVY}.}  For instance,  assume that $\lambda^c(\mu)<0$ (if not,  use $f^{-1}$).  

Proposition \ref{mainthm-hopf} shows that there is no other ergodic MME with nonpositive central exponent. Now Proposition \ref{mainthm-twin} gives some ergodic MME $\nu$ with $\lambda^c(\nu)\geq 0$. In particular $\nu\ne\mu$.  The previous uniqueness shows that the case $\lambda^c(\nu)=0$ cannot occur so $\lambda^c(\nu)>0$. We see that there are exactly two ergodic MME. {The dichotomy is proved.

\medbreak

It remains to prove that $\mu$ is Bernoulli.}
The symbolic dynamics of \cite{Benovadia} implies that $(f,\mu)$ is isomorphic to the product of a Bernoulli measure with a circular permutation of some order $p\geq1$. It follows that $\mu=(\mu_1+\dots+\mu_p)/p$ where the measures $\mu_1,\dots,\mu_p$ are distinct  ergodic MMEs for $f^p$ such that $\lambda^c(f^p,\mu_k)=p\cdot\lambda^c(f,\mu)$ {does not depend on $k$ and is not zero.} Observe that $f^p$ like $f$ is a flow-type diffeomorphism with minimal strong foliations. Hence, the previous uniqueness result applies to $f^p$ showing that $p=1$, i.e., $(f,\mu)$ is Bernoulli.

The theorem is established.
\end{proof}

\section{Perturbing time-one maps to get flow-type with minimal strong foliations} \label{s-pertub-flow}

{We prove Theorem \ref{mainthm-mft}. Let $\varphi^t:M\to M$ be a topologically transitive Anosov flow on a compact manifold and let $T>0$.  We find an open set of diffeomorphisms that are  flow-type and minimal strong foliations accumulating on $\varphi^T$. We first use structural stability results, mainly from \cite{HPS}, to see that {all $C^1$ perturbations of $\varphi^T$ are flow-type. A $C^1$ perturbation will then be used to get the robust} minimality of the strong foliations. 
}

\begin{proposition}\label{prop-mft}
Let $\varphi^T:M\to M$ be the time $T>0$ map of a 
Anosov flow. Then there is a $C^1$-neighborhood $\mathcal V$ of $\phi^T$ such that $f\in\mathcal V$ is flow-type.
\end{proposition}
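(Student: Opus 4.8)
The plan is to verify each of the four conditions \mftPH--\mftPush\ in the definition of flow type for every diffeomorphism $f$ in a sufficiently small $C^1$-neighborhood $\mathcal V$ of $\varphi^T$, using the persistence of normally hyperbolic laminations. First, recall that for an Anosov flow $\varphi^t$ on $M$, the time-$T$ map $\varphi^T$ is partially hyperbolic: the splitting $TM = E^s_\varphi \oplus \RR X \oplus E^u_\varphi$ (where $X$ is the generating vector field) is dominated, with $E^s_\varphi$ uniformly contracted and $E^u_\varphi$ uniformly expanded, and the center $\RR X$ is one-dimensional. Partial hyperbolicity is a $C^1$-open condition, so after shrinking $\mathcal V$ we get \mftPH\ for all $f\in\mathcal V$, with a splitting $TM = E^s_f\oplus E^c_f\oplus E^u_f$ close to the one for $\varphi^T$, in particular $\dim E^c_f = 1$.

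Next I would obtain dynamical coherence \mftDC. The flow $\varphi^t$ is dynamically coherent: its weak-stable and weak-unstable foliations $\mathcal F^{cs}_\varphi$, $\mathcal F^{cu}_\varphi$ are tangent to $E^s_\varphi\oplus\RR X$ and $\RR X\oplus E^u_\varphi$, their leaves are noncompact (or coincide with the orbit structure appropriately), and each is normally hyperbolic with respect to $\varphi^T$ in the sense of Hirsch--Pugh--Shub \cite{HPS}. By the structural stability of normally hyperbolic laminations (\cite[Theorem 7.1]{HPS} applied to each of $\mathcal F^{cs}_\varphi$ and $\mathcal F^{cu}_\varphi$, which are plaque-expansive because they are smooth foliations with $C^1$ leaves), for $f$ close enough to $\varphi^T$ there are $f$-invariant foliations $\mathcal F^{cs}_f$, $\mathcal F^{cu}_f$, with $C^1$ leaves varying continuously, tangent to $E^s_f\oplus E^c_f$ and $E^c_f\oplus E^u_f$ respectively; these are $C^0$-close to the original ones. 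Intersecting them gives the center foliation $\Fc$ of $f$, whose leaves are $C^1$ curves $C^0$-close to the orbit segments of $\varphi$. This establishes \mftDC\ robustly.

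For \mftCLeaf\ I would argue as follows. The orbit foliation of $\varphi$ is orientable (oriented by $X$), and orientability of a one-dimensional foliation with continuously varying leaves is a $C^0$-stable property, so $\Fc$ is orientable for $f\in\mathcal V$. For the existence of a compact center leaf, recall that a transitive Anosov flow has at least one periodic orbit $\gamma$; the orbit $\gamma$ is a compact leaf of $\mathcal F^c_\varphi$ and is itself normally hyperbolic, so by persistence it has a nearby $f$-invariant compact normally hyperbolic circle $\gamma_f$, which is a compact leaf of $\Fc$ (indeed, since it is $f$-invariant, compact, and tangent to $E^c_f$, it is a union of $\Fc$-leaves, hence a single compact leaf by connectedness). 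This yields \mftCLeaf. Finally, for \mftPush: let $F:\RR\times M\to M$ be the unit-speed flow along $\Fc$. Since the leaves of $\Fc$ are $C^0$-close (as pointed sets with orientation) to the orbit segments of $\varphi$, and $\varphi^T(x)$ lies strictly forward of $x$ along the $\varphi$-orbit at a distance bounded below by $T\cdot\min\|X\|>0$, the point $f(x)$ lies on $\Fc(x)$ strictly in the positive direction, at arclength distance bounded away from $0$ and $\infty$ uniformly in $x$; defining $\tau(x)$ to be this signed arclength, we get $f(x)=F(x,\tau(x))$ with $\tau$ continuous and $\tau>0$ on the compact manifold $M$. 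This gives \mftPush\ and completes the verification.

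The main obstacle is the step establishing \mftDC\ robustly: one must be careful that the Hirsch--Pugh--Shub persistence theorem applies, which requires plaque expansivity of the normally hyperbolic foliations $\mathcal F^{cs}_\varphi$ and $\mathcal F^{cu}_\varphi$ of $\varphi^T$; this holds because these are genuine $C^1$ foliations (not merely laminations), for which plaque expansivity is automatic, but one should also ensure that the perturbed objects $\mathcal F^{cs}_f$, $\mathcal F^{cu}_f$ are honest foliations of $M$ and are $f$-invariant rather than merely locally invariant, and that their leaves really are tangent to the predicted bundles. Once this is in place, the remaining conditions follow from persistence of normal hyperbolicity of a single periodic orbit and elementary $C^0$-closeness arguments for the one-dimensional center. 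The other conceptual point worth care is that we want a $C^1$-open set $\mathcal V$ on which \emph{all} these conclusions hold simultaneously, so I would take $\mathcal V$ to be the intersection of the (finitely many) open neighborhoods produced at each step.
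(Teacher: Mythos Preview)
Your overall strategy matches the paper's: verify each of \mftPH--\mftPush\ using the Hirsch--Pugh--Shub persistence theory. However, there is a genuine gap in your argument for \mftDC.

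You apply HPS~(7.1) to the weak foliations $\Fcs_\varphi$ and $\Fcu_\varphi$, justifying plaque expansivity by the claim that they are ``smooth foliations with $C^1$ leaves''. But the weak stable and unstable foliations of an Anosov flow are in general \emph{not} $C^1$ as foliations: the distributions $E^s\oplus\RR X$ and $\RR X\oplus E^u$ are typically only H\"older, so although the individual leaves are smooth, the foliation charts are not. Hence the criterion ``$C^1$ foliation $\Rightarrow$ plaque expansive'' (HPS~(7.2)) does not apply to $\Fcs_\varphi$, $\Fcu_\varphi$ as stated. The paper avoids this by applying HPS to the \emph{center} foliation $\Fc_\varphi$ instead: this is the orbit foliation of a smooth vector field, hence genuinely smooth, so plaque expansivity is automatic. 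The perturbed $\Fcs_g$ and $\Fcu_g$ are then extracted from the proof of HPS Theorem~6.8, where they arise as the stable and unstable manifolds of the perturbed center leaves.

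A secondary gap concerns \mftPush. You assert that $\tau$ is continuous, but this needs an argument: on a compact center leaf the equation $f(x)=F(x,\tau(x))$ has infinitely many solutions, and one must select a branch continuously. The paper handles this by first extracting from the HPS structural stability data that one can choose $\tau$ with $\sup|\tau-T|<\eps$, and then arguing by contradiction: a discontinuity of $\tau$ would produce two limits $s\ne t$ with $F(z,s)=F(z,t)$ and $|s-t|<2\eps$, i.e., a compact center leaf of length $<2\eps$, which is impossible since the minimum length of compact center leaves is close to that of the original flow and hence larger than $3\eps$. Your appeal to ``$C^0$-closeness'' and ``arclength bounded away from $0$ and $\infty$'' does not by itself rule out this phenomenon.
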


\begin{proof}
We must prove \mftPH, \mftDC, \mftCLeaf, and \mftPush\ for all diffeomorphisms $C^1$-close to $\varphi^T$. Observe that these properties are well-known for $\varphi^T$ itself. Let us see that they hold for all $C^1$-close diffeomorphisms using the structural stability theory in \cite{HPS}.

Partial hyperbolicity with the center subbundle of a given dimension is well-known to be robust.
Since the center foliation $\Fc$ of $\varphi^T$ is the partition into the orbits of the flows, {it} is smooth, {hence} it is plaque expansive \cite[(7.2)]{HPS}. Therefore $(\varphi^T,\Fc)$ is structurally stable  \cite[(7.1)]{HPS}, {and} dynamical coherence \mftDC\ holds robustly. Indeed, this theorem yields a center foliation $\Fc_g$ for $g$, whereas its proof, especially \cite[thm. (6.8)]{HPS}, gives stable and unstable manifolds of center leaves, {these provide the required invariant foliations $\Fcu_g$ and $\Fcs_g$ tangent to $E^{cu}$ and $E^{cs}$.} 

The center foliation of $\varphi^T$ is obviously oriented by the vector field. The flow being expansive it has at most countably many closed orbits. {The Anosov Closing Lemma yields} (infinitely many) closed orbits. The structural stability of $(\varphi^T,\Fc)$ implies that \mftCLeaf\ holds robustly.

To establish \mftPush, we need to refer to the proof of the structural stability of $(\varphi^T,\Fc)$ \cite[Thm. (7.1)]{HPS} and especially of \cite[Thm. (6.8)]{HPS}. We use terminology and notations from \cite[chap. 6, 7]{HPS}. On page 107 of \cite{HPS}, it is shown that the perturbed diffeomorphism $f'$ has a center foliation $\cF'$ whose every leaf is close to the corresponding leaf of $\cF$ in the sense that $\cF'(h(x))$ is represented by section of the formal normal bundle to $\cF(x)$ which is close to zero when $f'$ is close to $f=\varphi^T$. Since the lifts $i^*f,i^*f'$ of $f,f'$ to this formal bundle are close to each other, we see that $f'(x)=F'(\tau(x),x)$ for some function $\tau:M\to\RR$ with $\sup|\tau-T|<\eps$ where $\eps>0$ can be taken arbitrarily small by assuming $f'$ to be close enough to $f=\varphi^T$.

The previous reasoning shows that the minimum of the lengths of the closed loops for $\cF'$ is close to that of $\cF$. In particular it can be assumed to be larger than $3\eps$.

If $\tau$ is not continuous, then there are two sequences of points $x_n,y_n\in M$ that converge to the same limit $z$ and such that $s:=\lim_{n\to\infty} \tau(x_n)$ and $t:=\lim_{n\to\infty}\tau(y_n)$ exist and are distinct. This implies that $F^s(z)=F^t(z)$ with $|t-s|<2\eps$, i.e., $\cF'$ contains a loop of length less than $2\eps$. The contradiction proves the continuity of $\tau$.
\end{proof}

{The robust minimality of the strong foliations follows from works of Bonatti, Di\'az and Ur\`es  \cite{BD96,BDU}.} Let $\mathcal T$ be the set of time $T$ maps of  topologically transitive Anosov flows on some compact manifold. We need to find an open set $\mathcal U$ of diffeomorphisms  whose strong stable and strong unstable foliations are both minimal and such that $\mathcal T\subset\overline{\mathcal U}$. {We proceed in two steps.}

\newcommand\step[2]{\medbreak\noindent{\bf Step #1.} \emph{ #2}\medbreak}

\step{1}
{Any diffeomorphism in $\mathcal T$ can be $C^1-$approximated by  robustly transitive diffeomorphisms.}

{This follows from the work of Bonatti and Diaz in \cite{BD96}.}

\step{2}
{Any diffeomorphism in $\mathcal T$  has a $C^1-$neighborhood $\mathcal V$ with the following property. Any  robustly transitive diffeomorphism in $\mathcal V$ can be $C^1$-approximated by diffeomorphisms whose strong stable and strong unstable foliations are both minimal.
}

{
This follows from the work of Bonatti, D\'iaz and Ures \cite{BDU} (it is in fact a simpler situation since in our setting hyperbolic periodic points are dense and contained in invariant compact leaves). 
}

\medbreak

This completes the proof of Theorem \ref{mainthm-mft}.

{
  \begin{remark}
 A recent work of Ures, Viana, and Yang \cite{UresComm} extends  \cite{BDU} to a larger class of systems among the $C^\infty$ volume preserving diffeomorphisms. More precisely, they prove that in a neighborhood of the time one map of the geodesic flow of any hyperbolic surface, there is an open and dense set of diffeomorphisms with both strong foliations minimal.
 \end{remark}
 }

\bibliography{ourbib-BFT2016}{}
\bibliographystyle{plain}
\end{document}